\documentclass[11pt]{article}

\ifdefined\pdfinfoomitdate\pdfinfoomitdate=1\fi
\ifdefined\pdfsuppressptexinfo\pdfsuppressptexinfo=15\fi
\ifdefined\pdftrailerid\pdftrailerid{}\fi
\ifdefined{/Creator () /Producer ()}\fi

\usepackage[T1]{fontenc}
\usepackage[utf8]{inputenc}
\usepackage{lmodern}
\usepackage{microtype}
\usepackage{amsmath,amssymb,amsthm,mathtools,mathrsfs}
\usepackage{booktabs}
\usepackage{array}
\usepackage{graphicx}
\usepackage{tabularx}
\usepackage{placeins}
\usepackage{flafter}
\graphicspath{{figures/}}
\usepackage{enumitem}
\usepackage{algorithm}
\usepackage[noend]{algpseudocode}
\usepackage{float}
\usepackage{xcolor}
\usepackage[a4paper,margin=27mm]{geometry}
\usepackage[hidelinks]{hyperref}
\usepackage[font=small,labelfont=bf]{caption}
\newtheorem{theorem}{Theorem}[section]
\newtheorem{proposition}[theorem]{Proposition}
\newtheorem{lemma}[theorem]{Lemma}
\newtheorem{corollary}[theorem]{Corollary}
\newtheorem{conjecture}[theorem]{Conjecture}
\theoremstyle{definition}

\newcommand{\Z}{\mathbb Z}
\newcommand{\R}{\mathbb R}

\newcommand{\Q}{\mathbb Q}
\newcommand{\conv}{\operatorname{conv}}
\newcommand{\relint}{\operatorname{relint}}
\newcommand{\ehr}{\operatorname{Ehr}}
\newcommand{\vol}{\operatorname{Vol}}
\newcommand{\Av}{\operatorname{Av}}
\newcommand{\cC}{\mathcal C}
\newcommand{\cL}{\mathcal L}
\newcommand{\cQ}{\mathcal Q}

\newcommand{\cstar}{c_*}

\title{Ehrhart $h^*$-polynomials of\\$(132,213)$-avoiding permutation polytopes:\\A repair cone and eventual real-rootedness}

\author{Pedro M. M. de Castro\\
\small Centro de Inform\'atica, Universidade Federal de Pernambuco\\
\small Recife, Pernambuco, Brazil\\
\small \texttt{pmmc@cin.ufpe.br}}

\hypersetup{
  pdftitle={Ehrhart h-star-Polynomials of (132,213)-Avoiding Permutation Polytopes: A Repair Cone and Eventual Real-Rootedness},
  pdfauthor={Pedro M. M. de Castro},
  pdfcreator={},
  pdfproducer={},
  pdfkeywords={Ehrhart polynomial, pattern avoidance, poset permutahedron, tournament score sequence, real-rootedness, exact certificate}
}

\date{September 5, 2026}

\begin{document}

\maketitle

\begin{abstract}
Let $P_d(132,213)$ be the convex hull of the permutations in $S_d$ that avoid $132$ and $213$, and let $H_d(t)$ be its Ehrhart $h^*$-polynomial, defined by
\[
 \sum_{m\ge0}|mP_d(132,213)\cap\mathbb Z^d|t^m
 =\frac{H_d(t)}{(1-t)^d}.
\]
We give an explicit lattice equivalence between this polytope, a path-Laplacian deficit polytope, and twice a translated chain-poset permutahedron. The resulting face description and triangulation give a self-contained proof of Davis and Sagan's cubicality and volume conjecture, with normalized volume $2^{d-1}d^{d-3}$ for $d\ge2$. For $d\ge3$, the coefficient of $t^{d-1}$ in $H_d(t)$ counts strong tournament score sequences.

The refined Eulerian expansion of $H_d(t)$ has a negative coordinate already at $d=7$. We construct a compatible cone of adjacent signed differences that accommodates this obstruction. Its largest uniform repair parameter is $2-\sqrt3$, and membership reduces to explicit geometric-tail inequalities. An exact dataset and reconstruction algorithm verify these inequalities through $d=1000$. A marked-component identity, discrete smoothing, and Darroch's mode theorem (Ann. Math. Statist.\ 35 (1964), 1317--1321, Theorem~4) prove them for every $d\ge2^{72}$. Thus $H_d(t)$ has only negative real zeros for $3\le d\le1000$ and for $d\ge2^{72}$, while $H_1(t)=H_2(t)=1$. Uniform real-rootedness in the intervening range remains open.
\end{abstract}

\medskip

\noindent\textbf{AI-use disclosure.} OpenAI's ChatGPT and Codex were used as research tools to assist the author with literature retrieval, mathematical exploration, symbolic and numerical checks, and draft preparation and revision. The author formulated the research questions and mathematical framework, developed and refined the results and arguments with this assistance, and critically reviewed the manuscript throughout. Responsibility for the mathematical statements and the final manuscript rests entirely with the author.
\par

\medskip
\noindent\textbf{Keywords.} Ehrhart polynomial, pattern avoidance, poset permutahedron, tournament score sequence, real-rooted polynomial, exact certificate.

\smallskip
\noindent\textbf{2020 Mathematics Subject Classification.} Primary 05A15, 05C20, 52B20; Secondary 05A16, 30C15, 52B12.

\clearpage

\section{Introduction}

The face lattice of a polytope and the zeros of its Ehrhart numerator capture different kinds of structure. Faces describe how the boundary fits together; the Ehrhart numerator records lattice-point counts in every integral dilate. Real zeros impose log-concavity and unimodality on its coefficients, and their occurrence often reveals additional combinatorial structure \cite{BrandenJochemko2022,Athanasiadis2025,FerroniHigashitani2024}. The polytopes studied here bring these questions together: their faces form a cube, their interior lattice points encode tournaments, and their Ehrhart numerators lead to a signed extension of a familiar Eulerian cone.

A permutation \emph{avoids} a pattern when no subsequence has the same relative order as that pattern. For example, the entries $2,4,3$ of $2413$ form a copy of $132$. Write $\Av_d(\Pi)$ for the permutations of $[d]=\{1,\ldots,d\}$ that avoid every pattern in $\Pi$. Davis and Sagan \cite{DavisSagan2018} introduced the associated polytopes
\[
 P_d(\Pi)=\conv\{(\pi_1,\ldots,\pi_d):\pi\in\Av_d(\Pi)\}.
\]
Their Conjecture~3.19 asks whether $P_d(132,213)$ is combinatorially a $(d-1)$-cube with normalized volume $2^{d-1}d^{d-3}$. Here cubicality concerns face incidences, and normalized volume is measured in the affine lattice so that a unimodular simplex has volume one. This family is especially suitable for a direct comparison of geometry and enumeration: the avoiding permutations consist of increasing blocks whose values decrease from one block to the next.

Three descriptions of the same lattice object organize our approach:
\[
 \text{pattern-avoiding vertices}
 \longleftrightarrow
 \text{prefix deficits}
 \longleftrightarrow
 \text{chain-poset permutahedra}.
\]
The blocks label the vertices by cuts. Prefix deficits turn the facet inequalities into a path-Laplacian system, in which each cut is a zero of a piecewise parabolic profile. A second lattice map identifies the polytope with twice a translated chain-poset permutahedron. Its lattice points satisfy ordered prefix inequalities, giving access to both tournament scores and the Ehrhart series.

Write $P_d=P_d(132,213)$ and define $H_d(t)$ by
\begin{equation}\label{eq:intro-ehrhart}
 \sum_{m\ge0}|mP_d\cap\Z^d|t^m=\frac{H_d(t)}{(1-t)^d}.
\end{equation}
This is the usual Ehrhart $h^*$-polynomial of a $(d-1)$-dimensional lattice polytope \cite{BeckRobins2015}. Table~\ref{tab:small-numerators} displays the first nonconstant cases. The row sum is the normalized volume, and the final entry counts interior lattice points. All four polynomials have only negative real zeros; $H_3(t)=(1+t)^2$ has a repeated zero.

\begin{table}[ht]
\centering
\small
\begin{tabular}{@{}clrr@{}}
\toprule
$d$ & Coefficients of $H_d(t)$, in ascending order & Volume & Interior points\\
\midrule
3 & $(1,2,1)$ & 4 & 1\\
4 & $(1,12,18,1)$ & 32 & 1\\
5 & $(1,54,236,106,3)$ & 400 & 3\\
6 & $(1,241,2529,3438,696,7)$ & 6912 & 7\\
\bottomrule
\end{tabular}
\caption{Initial Ehrhart numerators, obtained from the connected series and refined transform in Section~\ref{sec:eulerian}. The final two columns also follow from the volume formula in Section~\ref{sec:volume} and the interior-point bijection in Section~\ref{sec:tournaments}. The cases $d=1,2$ have $H_d(t)=1$.}
\label{tab:small-numerators}
\end{table}

The zero pattern poses the main difficulty. Burnside averaging converts the chamber lattice-point count into a connected generating series. A refined Worpitzky transform then expands $H_d(t)$ in compatible Eulerian polynomials. Nonnegative coordinates in this basis would imply real-rootedness, but the coordinate $a_{7,1}=-1/15$ already prevents that argument. We replace adjacent basis elements by signed differences and prove that the resulting family remains compatible up to the uniform parameter $c_*=2-\sqrt3$. The coefficients in the new cone are geometric tails of the original coordinates. Their positivity becomes the common target of the finite and analytic parts of the paper.

\subsection*{Main results}

The first result supplies the geometric model and fixes the lattice normalization used throughout.

\begin{theorem}[Geometric and lattice structure]\label{thm:intro-geometry}
For every $d\ge1$, the polytope $P_d(132,213)$ is combinatorially a $(d-1)$-cube and
\[
 \vol_{\mathrm{norm}}P_d(132,213)=2^{d-1}d^{d-3},
\]
where $d=1$ has the normalized volume $1$ of a lattice point. Moreover,
\[
 P_d(132,213)\cong_{\Z}2(\Pi_{\cC_d}-\mathbf1),
\]
where $\cC_d$ is the chain $1<\cdots<d$, $\Pi_{\cC_d}$ is its poset permutahedron, and $\mathbf1=(1,\ldots,1)$. Section~\ref{sec:tournaments} specifies its inequalities and affine lattice.
\end{theorem}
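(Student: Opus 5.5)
The plan is to work entirely in prefix-sum coordinates. The first step is to describe the vertex set: a permutation avoids both $132$ and $213$ exactly when it is a concatenation of increasing blocks of consecutive values, with the blocks decreasing in value. Such permutations are therefore indexed by compositions of $d$, or equivalently by cut sets $K\subseteq[d-1]$, which gives $2^{d-1}$ vertices. Next I pass to the prefix sums $S_k=\pi_1+\cdots+\pi_k$ and the deficits $D_k=S_k-\binom{k+1}{2}$. On the hyperplane $S_d=\binom{d+1}{2}$ these give a unimodular change of coordinates onto $\Z^{d-1}$. For a vertex with cut set $K$, the profile $k\mapsto D_k$ should be piecewise quadratic, a parabola on each block. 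It vanishes exactly at the cuts (and at $0,d$), and its discrete second difference is constant inside a block and jumps at the cuts. This is the path-Laplacian picture from the introduction.

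From this profile I would guess, and then verify, an $H$-description with $2(d-1)$ inequalities, grouped into one pair per $k\in[d-1]$. The first inequality of the pair is $D_k\ge 0$, which is tight exactly when $k\in K$. The second is a path-Laplacian inequality: a bound on $2D_k-D_{k-1}-D_{k+1}$, tight exactly when $k\notin K$, i.e.\ when $k,k+1$ lie in the same block. A sanity check at $d=3$: the vertices $(S_1,S_2)=(1,3),(2,5),(3,4),(3,5)$ give a quadrilateral, not the box $\binom{k+1}2\le S_k\le k(2d-k+1)/2$, so the Laplacian form of the second inequality is needed. Validity of the inequalities on all vertices should follow from convexity of the parabola profile. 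The key structural fact is then that each vertex lies on exactly one facet from each pair, and that the $d-1$ tight normals at each vertex form a lattice basis; this uses that the path Laplacian restricted to any set of non-cuts is unimodular after eliminating the $D_k$ with $k\in K$. Consequently the polytope is simple. A simple $(d-1)$-polytope with $2(d-1)$ facets grouped into $d-1$ disjoint pairs, in which every subset of facets choosing at most one per pair has the expected face, is combinatorially a cube. I would make this precise by showing that the face where facets $F\subseteq$ (one per pair) are tight is exactly the convex hull of vertices whose cut sets agree with $F$ in the prescribed coordinates, which gives the face lattice of $[0,1]^{d-1}$.

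For the lattice equivalence, I would take the chain poset permutahedron $\Pi_{\cC_d}$ in its standard inequality form: $\sum_i x_i$ is fixed, and the prefix sums over order ideals of $\cC_d$ satisfy the Postnikov-type bounds. I then write an explicit affine unimodular map $\Z^d\to\Z^d$, essentially $x\mapsto$ the difference of consecutive $S_k$ composed with a halving shift. This map should send the facet pairs above to the order-ideal inequalities of $2(\Pi_{\cC_d}-\mathbf1)$. The factor $2$ comes from the fact that the Laplacian inequality has even right-hand side after the $\binom{k+1}{2}$ shift. To check the map I compare both $H$-descriptions and both affine lattices, and confirm on the vertices.

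The main obstacle is the volume $2^{d-1}d^{d-3}$. Given the equivalence, it suffices to show $\vol_{\mathrm{norm}}(\Pi_{\cC_d}-\mathbf1)=d^{d-3}$ in dimension $d-1$, since dilating by $2$ multiplies the normalized volume by $2^{d-1}$. My first approach would be a pulling triangulation from the vertex $K=\varnothing$, the identity permutation. Recursively, the facets not containing that vertex are the $D_k=0$ facets, and each is a product of two smaller polytopes of the same type split at $k$. This gives a recursion of Abel type,
\[
 V_d=\sum_{k=1}^{d-1}\binom{d-2}{k-1}\,(\text{height})\,V_kV_{d-k}
\]
up to the correct multinomial and height factors. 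I would check that $V_d=d^{d-3}$ satisfies it using Abel's identity. If the heights turn out to be awkward, the fallback is to compute the leading Ehrhart coefficient directly: the lattice points of the dilated chain permutahedron are the sequences satisfying ordered prefix inequalities, a parking-function/Pitman--Stanley-type count. The top-degree term of that count, via cycle-lemma enumeration, should be $d^{d-3}\,m^{d-1}/(d-1)!$.
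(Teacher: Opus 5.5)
Your overall route is the paper's. You label the reverse-layered vertices by cut sets, use prefix deficits with one ``zero'' facet and one path-Laplacian facet for each $k$, put exactly one tight facet per pair at every vertex, and pull from the identity vertex to reduce the volume to a Cayley/Abel convolution.

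As written, however, your $H$-description is wrong, because your deficit has the wrong normalization. $D_k=S_k-\binom{k+1}2$ is the excess over the \emph{minimal} prefix sum. At a reverse-layered vertex it equals $k(d-k)$ at every cut, vanishes identically at $12\cdots d$, and is piecewise linear between cuts. Hence $D_k\ge0$ is valid, but for $d\ge3$ its face is the single vertex $12\cdots d$, not a facet. Your own $d=3$ data show this: $S_1=1$ and $S_2=3$ touch the quadrilateral only at $(1,3)$. Its edges lie on $S_1=3$, $S_2=5$, $2S_1-S_2=-1$, and $2S_2-S_1=5$. The correct partner of the Laplacian facet is $S_k\le M_k=k(2d-k+1)/2$, i.e.\ $u_k=M_k-S_k=k(d-k)-D_k\ge0$; in your coordinates this reads $D_k\le k(d-k)$. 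With $u$, the vertex $u^C$ is the parabola $(k-a)(b-k)$ between consecutive cuts $a<b$, and the other facet is $(Lu)_k\le2$, i.e.\ $x_k-x_{k+1}\ge-1$. The same slip breaks your pulling step, since the identity lies on every hyperplane $D_k=0$. With $u$, the facets missing the identity are exactly $\{u_k=0\}\cap\cQ_d\cong\cQ_k\times\cQ_{d-k}$.

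Three further points need repair.

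First, the tight normals at $u^C$ are not a lattice basis. Eliminating the cut coordinates leaves Dirichlet path Laplacians on the runs of non-cuts, with determinant $b-a$ for consecutive cuts $a<b$ (including $0$ and $d$). So the index is $\prod(b-a)$; at $d=3$ the cone at $(1,3)$ is spanned by $(2,1)$ and $(1,2)$, of determinant $3$. Independence suffices for simplicity, so cubicality survives. But the polytope is not smooth, and these determinants are exactly what produce the paper's simplex volumes $\frac1d\prod_j\ell_j$.

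Second, the lattice map involves no halving. The map $\Psi_d(x)_i=d+i-1-x_i$ has linear part $-I$, sends $A_k$ to $a_k\le a_{k+1}$, and sends $B_k$ to $\sum_{i\le k}a_i\ge2\binom k2$. Your description of $\Pi_{\cC_d}$ by order-ideal bounds alone omits the order inequalities, and without them the region is unbounded.

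Third, the height you left open is $u^{\varnothing}_k=k(d-k)$, with primitive normal $e_k$. So $V_d=\vol_{\mathrm{norm}}P_d$ satisfies $V_d=\sum_{k=1}^{d-1}k(d-k)\binom{d-2}{k-1}V_kV_{d-k}$. This is the paper's interval recurrence with $\mathcal T_d=dV_d$. Substituting $V_d=2^{d-1}d^{d-3}$ reduces it to the fixed-edge identity $\sum_{k}\binom{d-2}{k-1}k^{k-2}(d-k)^{d-k-2}=2d^{d-3}$, so the Ehrhart fallback is unnecessary.
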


A \emph{tournament} is an orientation of a complete graph. Its \emph{score sequence} lists its vertex outdegrees in nondecreasing order, and it is \emph{strong} when every vertex can reach every other by a directed path. We count each score sequence once, regardless of the number of tournaments realizing it. Landau's inequalities and their strict version relate these sequences to the interior of the chain-poset model \cite{Landau1953,HararyMoser1966}.

\begin{theorem}[Strong-score interpretation of the leading coefficient]\label{thm:intro-tournament}
For every $d\ge3$, the coefficient of $t^{d-1}$ in $H_d(t)$ is the number of strong tournament score sequences on $d$ vertices. The corresponding rational-numerator statement of Rehberg holds for $d=1$ and $d\ge3$. The case $d=2$ is exceptional: its rational numerator is $1+t$, while no tournament on two vertices is strong.
\end{theorem}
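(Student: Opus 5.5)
We will prove the statement with Ehrhart--Macdonald reciprocity, applied in the chain-poset model of Theorem~\ref{thm:intro-geometry}. Write $Q_d=2(\Pi_{\cC_d}-\mathbf1)$, which is lattice equivalent to $P_d$. Since $Q_d$ is a $(d-1)$-dimensional lattice polytope, the top coefficient $h^*_{d-1}$ of $H_d(t)$ equals the number of interior lattice points of $Q_d$, taken in its affine lattice. So the task reduces to a bijection between $\relint(Q_d)\cap\Z^d$ and the strong score sequences on $d$ vertices.

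First, we make the chain-poset permutahedron explicit. For the chain $1<\cdots<d$, the polytope $\Pi_{\cC_d}$ consists of the $x$ with $\sum_i x_i=\binom{d+1}{2}$ and the ordered prefix inequalities coming from the order ideals. After sorting, these read $x_1\le\cdots\le x_d$ together with $\sum_{i\le k}x_i\ge\binom{k+1}{2}$. We need the precise facet list and affine lattice recorded in Section~\ref{sec:tournaments}. Substituting $s=\tfrac12 y$ for $y\in Q_d$ then gives $s_i=x_i-1$ and the conditions
\[
 \sum_{i\le k}s_i\ge\binom{k}{2}\quad(1\le k\le d),\qquad \sum_{i\le d}s_i=\binom d2.
\]
These are Landau's inequalities, so lattice points of $Q_d$ should correspond to nondecreasing integer $s$ satisfying them, i.e.\ to score sequences. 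The delicate point is that the affine lattice of $Q_d$ has to match integrality of $s$. The factor $2$ and the translation by $\mathbf1$ are what make $s$ integral rather than half-integral, and the monotonicity $s_1\le\cdots\le s_d$ must be among the cut-out inequalities. We check both against the explicit lattice map of Section~\ref{sec:tournaments}.

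Next, we identify the interior. The relative interior is obtained by making every facet inequality strict. If the facets are exactly the prefix inequalities for $1\le k\le d-1$, the interior points are those with $\sum_{i\le k}s_i>\binom k2$ for all $k<d$. By Harary--Moser \cite{HararyMoser1966}, these are exactly the strong score sequences.

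This is the main obstacle. The cube structure from Theorem~\ref{thm:intro-geometry} gives $2(d-1)$ facets, but the prefix inequalities number only $d-1$. So the remaining $d-1$ facets must be the monotonicity constraints $s_k\le s_{k+1}$, or some combination of them. Strictness of those facets would then force distinct scores, which is wrong: strong score sequences may repeat values, e.g.\ $(1,1,1)$. The plan is therefore to use the precise facet description to show that the ordering is imposed only as a chamber choice and does not contribute facets of $Q_d$. If some of these facets are genuinely present, we would instead expect a shift: interior points of the form $s+(0,1,\dots,d-1)$-type corrections, correcting integrality via the factor $2$, which restores a bijection with weak sequences. We would test this against $d=3,4,5$ in Table~\ref{tab:small-numerators}, where the interior counts $1,1,3$ match the strong score counts $(1,1,1)$; $(1,1,2,2)$; and $3$ sequences for $d=5$.

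Finally, the exceptional cases follow from the table and from direct computation. For $d=2$, the polytope is a segment, so $H_2=1$ and there are no interior lattice points beyond what the normalization gives, while the rational numerator $1+t$ has $t$-coefficient $1$, whereas no tournament on two vertices is strong. For $d=1$, Rehberg's statement is a direct check. For $d\ge3$, the rational-numerator version agrees with $H_d$ on this coefficient, since the two numerators differ only by the factor $(1-t)$ conventions.
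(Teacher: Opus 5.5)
Your opening reduction is correct: $[t^{d-1}]H_d(t)=\lvert\relint(Q_d)\cap\Z^d\rvert$ for the $(d-1)$-dimensional lattice polytope $Q_d=2R_d$. The bijection you then propose fails, however.

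\emph{Halving does not preserve integrality.} The map $s=\tfrac12 y$ sends lattice points of $2R_d$ into $\tfrac12\Z^d$, not $\Z^d$. For $d=3$ the only interior lattice point of $2R_3$ is $(1,2,3)$, whose half is $(1/2,1,3/2)$. Integral solutions of Landau's inequalities are the lattice points of $R_d$, not of $2R_d$, and $\relint(R_d)$ contains no lattice point once $d\ge2$.

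\emph{The order constraints are genuine facets.} Your plan to show they are only a chamber choice cannot work. Under $\Psi_d$ the facets $A_k$ of Theorem~\ref{thm:H-description} become exactly $a_k\le a_{k+1}$. So $\relint(2R_d)\cap\Z^d$ consists of the integer vectors with $a_1<\cdots<a_d$, $\sum_i a_i=2\binom d2$, and $\sum_{i\le k}a_i>2\binom k2$ for $k<d$.

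\emph{The missing step is the shift $a_i=s_i+i-1$} of Proposition~\ref{prop:strong-bijection}. Because $a_{i+1}-a_i=s_{i+1}-s_i+1$, it is a bijection from nondecreasing to strictly increasing integer vectors. Because $\sum_{i\le k}(i-1)=\binom k2$, it turns $\sum_{i\le k}s_i>\binom k2$ into $\sum_{i\le k}a_i>2\binom k2$, and the total $\binom d2$ into $2\binom d2$. Repeated scores are therefore compatible with strict order, e.g.\ $(1,1,1)\mapsto(1,2,3)$. You mention an ``$s+(0,1,\dots,d-1)$-type correction'' only as a contingency. You attribute it to integrality and the factor $2$, although it is simply translation by an integral vector. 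You also never check the prefix inequalities, so as written the central bijection is not established.

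\emph{The rational-numerator claim is also unsupported.} $H_d$ and $h^*_{\Z}(R_d;t)$ do not differ by a $(1-t)$ convention. Since $L_{P_d}(m)=L_{R_d}(2m)$, the polynomial $H_d(t^2)$ is the even part of $h^*_{\Z}(R_d;t)$. By rational reciprocity, the highest nonzero coefficient of $h^*_{\Z}(R_d;t)$ lies in degree $2d-s$ and equals $\lvert\relint(sR_d)\cap\Z^d\rvert$. Here $s$ is the least dilate whose relative interior contains a lattice point. You therefore need $s=2$ for $d\ge3$, which takes two facts:
\begin{itemize}
\item $\relint(R_d)$ has no lattice point when $d\ge2$: strict order and $a_1>0$ force $a_i\ge i$, so the total is at least $\binom{d+1}2>\binom d2$.
\item $\relint(2R_d)$ has a lattice point when $d\ge3$: any tournament containing a directed Hamiltonian cycle is strong.
\end{itemize}
For $d=2$ the codegree is $3$, so the top coefficient of $1+t$ sits in odd degree and is invisible to $H_2=1$. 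This is the actual source of the exception, and your account of $d=2$ does not identify it.
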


The repair cone gives two ranges of real-rootedness.

\begin{theorem}[Real-rootedness through $d=1000$]\label{thm:intro-finite}
For every $1\le d\le1000$, all zeros of $H_d(t)$ are real and negative. Here $H_1(t)=H_2(t)=1$ and $H_3(t)=(1+t)^2$; the assertion is vacuous for $d=1,2$.
\end{theorem}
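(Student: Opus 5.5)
The plan follows the route announced in the introduction: an algebraic reduction proved for every $d$, followed by a finite exact verification for $3\le d\le1000$. First, from Theorem~\ref{thm:intro-geometry}, identify the lattice points of $mP_d$ with lattice points of $2m(\Pi_{\cC_d}-\mathbf1)$ in its affine lattice. These are described by ordered prefix inequalities of Landau type. Burnside averaging over orderings turns the chamber count into a connected (exponential) generating series. A refined Worpitzky transform then expresses
\[
 H_d(t)=\sum_{k} a_{d,k}\,E_{d,k}(t),
\]
where the $E_{d,k}$ are refined Eulerian polynomials forming a compatible family, meaning every nonnegative combination is real-rooted with nonpositive zeros. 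Since $a_{7,1}<0$, this expansion alone cannot suffice.

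The second step is the repair cone. Replace adjacent basis elements by signed differences $E_{d,k}-c\,E_{d,k+1}$, or a similar adjacent pairing. I will show these remain pairwise compatible for every $0\le c\le\cstar=2-\sqrt3$. Compatibility reduces to checking that each pair interlaces. Since $2-\sqrt3$ is a root of $c^2-4c+1$, I expect the threshold to come from a discriminant-type condition on a local quadratic in the interlacing check. Rewriting $H_d$ in the new family telescopes, so the new coordinates become geometric tails
\[
 b_{d,k}=\sum_{j\ge0}\cstar^{\,j}a_{d,k+j}
\]
(or the reversed tail). Real-rootedness of $H_d$ then follows once $b_{d,k}\ge0$ for all $k$. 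Negativity of the zeros follows because $H_d(0)=1>0$ and all coefficients are nonnegative.

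The third step is the certificate. For each $3\le d\le1000$, compute the $a_{d,k}$ exactly as rationals from the connected series, using a reconstruction recurrence in $d$ so the cost stays polynomial. Then verify every tail inequality $b_{d,k}\ge0$ in exact arithmetic. Because $\cstar$ is irrational, I would check each inequality in $\Q(\sqrt3)$, writing $b=x+y\sqrt3$ and deciding its sign exactly. An alternative is to bound $\cstar$ from below by a rational $c<\cstar$, use that the cone condition is monotone in $c$, and check with $c$. For $d\le2$ the statement is vacuous, and $d=3$ is explicit.

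The main obstacle is the compatibility proof of the signed-difference family uniformly in $d$ up to $\cstar$, since the finite check is mechanical once the cone is established. My first attempt would be a Hermite–Biehler or interlacing argument. I would show that $E_{d,k}$ and $E_{d,k+1}$ satisfy a common recursion, obtained by inserting $d$ into refined descent statistics, that preserves interlacing. I would then verify the $2\times2$ compatibility condition for the differences, which should reduce to positivity of $1-4c+c^2$.
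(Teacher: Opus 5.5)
Your overall route is the paper's: refined Eulerian expansion, adjacent repair at $c_*=2-\sqrt3$, geometric tails, and exact sign tests in $\Q(\sqrt3)$ for $4\le d\le1000$, with $d\le3$ done by hand. The gap is in the repair step: the family you propose to prove compatible is not the right one, in two ways.

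\emph{Orientation.} Your differences and your tails do not match. If $E_{d,k}$ carries $a_{d,k}$, then $E_{d,k}-cE_{d,k+1}$ forces the downward tails $a_{d,k}+ca_{d,k-1}+\cdots$. These fail early. Here $a_{d,1}=-(d-1)+2C_{d,1}$, and $C_{d,1}$ grows only polylogarithmically; already $a_{10,1}\approx-0.304$, so $a_{10,1}+c_*a_{10,0}<0$. The upward tails you wrote are the right ones. They require the orientation $A_{r+1}-cA_r$, so that the large positive coordinates above absorb the negative low ones.

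\emph{Truncation.} The upward tails cannot run over every index. Taking $T_k=\sum_{j\ge k}c_*^{j-k}a_{d,j}$ for all $k$ gives $H_d=T_0A_1+\sum_{k=1}^{d-1}T_k(A_{k+1}-c_*A_k)$. This puts the terminal repair $A_d-c_*A_{d-1}$ in the same cone as $A_1$. But at $d=4$,
\[
A_1(4,t)+A_4(4,t)-c_*A_3(4,t)=t^3+(4\sqrt3-3)t^2+(2\sqrt3+1)t+1
\]
has discriminant $4(623-360\sqrt3)<0$. So the compatibility statement you plan to prove is false at $c=c_*$. The paper keeps all of $A_1,\dots,A_d$ but adjoins only $A_{r+1}-cA_r$ for $2\le r\le d-2$, and stops the tails at $j=d-2$. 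It places $a_{d,0}=1$ on $A_1$, $S_{d,1}(c_*)$ on $A_2$, and $a_{d,d-1}$ on $A_d$, and checks the two endpoint coordinates separately.

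This obstruction pairs $A_1$ with the member at the opposite end of the family. A check on neighbouring elements, or a local $2\times2$ discriminant, therefore cannot detect it. You need every pair (Chudnovsky--Seymour) or, better, a single common interlacer. That is what the paper proves: $g_d=A_d(d,t)=tA_1(d,t)$ upper-interlaces every member of the truncated family. Here upper-interlacing means each quotient $f/g_d$ has nonnegative residues and a nonnegative constant term.

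At $d=4$ this is exactly where $c^2-4c+1\ge0$ enters. It is the condition that the nonzero root $\xi_c=-(1-2c)/(2-c)$ of $A_3-cA_2$ stays in $[-1/2,\beta]$, where $\beta=-2+\sqrt3$ is the zero of $A_1(4,t)$ nearest the origin. The property then propagates from $d$ to $d+1$ through the insertion operators $\mathsf L_d$ and $\mathsf R_d$. The residues of $\mathsf L_df$ and $\mathsf R_df$ at the negative zeros $\rho$ of $A_{d+1}(d+1,t)$ are nonnegative because $Q'(\rho)<0$, which follows from Cauchy--Schwarz. With that lemma in place, the rest of your plan coincides with the paper's.
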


\begin{theorem}[Eventual real-rootedness]\label{thm:intro-eventual}
For every integer $d\ge d_0=2^{72}$, all zeros of $H_d(t)$ are real and negative.
\end{theorem}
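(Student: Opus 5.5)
The plan follows the route announced in the introduction: membership in the repair cone, reduced to geometric-tail inequalities, which are then proved asymptotically. I take as given the refined Worpitzky expansion of Section~\ref{sec:eulerian}, which writes $H_d(t)=\sum_k a_{d,k}E_{d,k}(t)$ with $E_{d,k}$ a compatible family of refined Eulerian polynomials, together with the compatibility of the signed-difference family $E_{d,k}-cE_{d,k+1}$ for all $0\le c\le \cstar=2-\sqrt3$. Rewriting the expansion in the signed-difference basis with $c=\cstar$, the coefficient of the $k$-th basis element is the geometric tail
\[
 b_{d,k}=\sum_{j\ge0}\cstar^{\,j}a_{d,k-j}.
\]
Nonnegativity of all $b_{d,k}$ places $H_d$ in the compatible cone. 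Compatibility then gives a common interlacer, so $H_d$ is real-rooted, and the roots are negative because the coefficients are positive. The whole theorem therefore reduces to showing $b_{d,k}\ge0$ for every $k$ once $d\ge2^{72}$.

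\textbf{Step 1: an exact formula for $a_{d,k}$.} I would use the marked-component identity, which comes from the connected generating series produced by the Burnside averaging. It expresses $a_{d,k}$ as a weighted sum over compositions, or marked connected components, with weights of the form $m^{m-1}$-type tree counts divided by factorials. After normalizing, $a_{d,k}$ becomes a positive multiple of a probability mass $p_d(k)$ minus a small correction. Only the correction can be negative, as $a_{7,1}=-1/15$ shows.

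\textbf{Step 2: localizing the negativity.} I would show the correction is concentrated at small $k$ and at $k$ near $d$, and is exponentially small relative to the main term. Bulk indices then have $a_{d,k}>0$, and there $b_{d,k}\ge a_{d,k}>0$ follows trivially once every $a_{d,k-j}$ in the tail is positive. For edge indices, the positive mass at the neighbouring indices must absorb the negative terms after weighting by $\cstar^{\,j}$.

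\textbf{Step 3: discrete smoothing and Darroch.} This is the main obstacle: controlling the edge indices uniformly in $d$. The plan is to write the main term as the distribution of a sum of independent Bernoulli-like variables, equivalently a polynomial with only real zeros. Darroch's theorem (Theorem~4) then locates its mode within one of its mean. Log-concavity gives $p_d(k-1)/p_d(k)$ bounded by an explicit ratio, and discrete smoothing (convolving with a short kernel) converts this into an inequality of the form $p_d(k)\ge C\,\lvert\text{correction}_k\rvert$. The crude constants in the correction bounds, for instance from Stirling estimates on $d^{d-3}$-type counts, are what force the threshold. I expect the final inequality to hold once some quantity like $d\,e^{-\sqrt{d}/C}$, or a polylogarithmic ratio, drops below $1-\cstar$. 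Tracking all constants explicitly yields $d_0=2^{72}$.

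Finally I would assemble the pieces: all $b_{d,k}\ge0$ for $d\ge2^{72}$, hence cone membership, hence only real zeros. Positivity of the coefficients of $H_d$ excludes nonnegative zeros, and $H_d(0)=1$ excludes zero itself.
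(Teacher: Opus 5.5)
There is a genuine gap at the first step: your repair points in the wrong direction, and this is fatal rather than notational. You subtract $c$ times the \emph{next} basis element, $A_{k+1}(d,t)-c\,A_{k+2}(d,t)$. Your cone coordinates are therefore downward tails $b_{d,k}=\sum_{j\ge0}\cstar^{\,j}a_{d,k-j}$ anchored at $a_{d,0}=1$, and these are negative for all large $d$.

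By \eqref{eq:magic-coordinates}, $a_{d,1}=2C_{d,1}-(d-1)$. Here $C_{d,1}=\sum_{n\le d}\frac1n\sum_{h\mid n}\frac{\varphi(h)}{h}\eta_{n/h-1}=O(\log^3 d)$, so $b_{d,1}=a_{d,1}+\cstar=-d+O(\log^3 d)$. No other decomposition rescues this. The $A_i(d,t)$ form a basis, and $A_1$ occurs only in $A_1$ and in $A_1-cA_2$. So in any nonnegative combination of the $A_i$ and your differences, the $A_1$-coordinate $a_{d,0}=1$ caps the weight on $A_1-cA_2$ at $1$. The $A_2$-coordinate is then at least $-c\ge-\cstar$, whereas $a_{d,1}<-\cstar$ for all large $d$. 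Hence $H_d$ is simply not in your cone.

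The compatibility you take as given is also not available in this form. Your endpoint difference $A_1-\cstar A_2$ is the image of the terminal repair under $f\mapsto t^{d-1}f(1/t)$, and the paper shows that repair is incompatible at $d=4$. The paper's repair is $E^{(r)}_{d,c}=A_{r+1}-cA_r$ for $2\le r\le d-2$ only. Its coordinates are the upward tails $S_{d,r}(\cstar)=\sum_{j=r}^{d-2}\cstar^{\,j-r}a_{d,j}$, with $a_{d,0}$ and $a_{d,d-1}$ kept outside. Negative low coordinates are then absorbed by the large bulk above them.

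Your Step~2 picture would fail even with the orientation fixed. For each fixed $j$, the $q=0$ term $(-1)^j\binom{d-1}{j}$ of \eqref{eq:magic-coordinates} dominates as $d\to\infty$, because for $q\ge1$ the $C_{d,q}$ grow only like $d^{q-1}$ up to polylogarithmic factors. So the low coordinates alternate in sign with polynomially large size. They are not a positive main term minus an exponentially small correction, and the paper never proves $a_{d,j}>0$ for $j\le\lfloor d/4\rfloor$.

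Instead the paper splits the tails. For $r\le\lfloor d/4\rfloor$ (Proposition~\ref{prop:low-r-block}), a forward Cauchy estimate on $|z|=5/113$ shows the whole tail is dominated by $\cstar^{-r}\mathcal A_d(\cstar)=\cstar^{-r}y^{d-1}\mathcal F_d(y)$ with $y=\sqrt3-1$. Only for the remaining indices does it prove individual positivity $a_{d,d-1-m}>0$ (Proposition~\ref{prop:positive-reverse}), in reverse coordinates.

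That step needs ingredients your Step~3 does not supply:
\begin{itemize}
\item the marked identity \eqref{eq:marked-identity};
\item a tilt $x$ chosen with $\mu_d(x)=m$, which is what lets Darroch's theorem make $m$ the mode of the Poisson-binomial $X_{d,x}$ and give $\Pr(X_{d,x}=m)>1/(7\sqrt{m+1})$;
\item the adjacent-difference smoothing of Lemma~\ref{lem:pb-smoothing};
\item comparison with the positive constant $E(x)=\exp\bigl((x-1)J(\lambda(x)^{-1},x)\bigr)>1/16$.
\end{itemize}
The correction series $\mathcal H$ has mixed signs and is not small; positivity comes from its exponential form. The error is of order $d^{-1/2}$ (explicitly $2^{31}/\sqrt d$ plus an exponentially small periodic term), not exponentially small. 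That is exactly what forces $d_0=2^{72}$.
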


The finite theorem follows from $498500$ strict inequalities in $\Z[\sqrt3]$, with a complete coordinate dataset and a reconstruction algorithm. The eventual theorem follows from uniform coefficient estimates. Its explicit threshold follows from one uniform comparison of reverse coefficients with a positive exponential series. The argument uses first moments of an absolute coefficient majorant, an elementary discrete smoothing estimate, and Darroch's mode theorem \cite[Theorem~4]{Darroch1964}. For a sum of independent Bernoulli variables, this theorem places every mode at distance strictly less than one from the mean; an integer mean is therefore the unique mode. Pitman \cite[p.~284]{Pitman1997} explains this rule in the setting of real-rooted generating polynomials. The power of two makes the final error comparison immediate. The theorem establishes an eventual mechanism for real-rootedness, and leaves the attractive problem of proving it for every $d\ge3$, including $1001\le d<d_0$.

\subsection*{Relation to previous work}

The geometric conclusions have substantial antecedents. Tamayo Jim\'enez recorded the reverse-layered vertices, paired inequalities, and cubicality in his thesis \cite[Section~3.5, Propositions~3.5.1--3.5.2 and Observation~6]{Tamayo2017}. Orevkov obtained the corresponding discriminant-polytope volume by a chain triangulation \cite[Theorem~1 and Lemmas~1--3]{Orevkov1999}; the structural relation belongs to the GKZ framework \cite{GKZ2008}. Partitioned weight polytopes give another route to the cubical face structure after the lattice identification \cite[Corollary~4.3 and Propositions~4.5--4.6]{HoriguchiEtAl2024}. Dominant weight polytopes are cubical in all root systems for strongly dominant weights \cite[Theorem~2.9]{BurrullGuiHu2024}; the type-$A$ specialization provides a further geometric antecedent. Generalized permutahedra provide further geometric context \cite{Postnikov2009}. Our contribution in this part is an explicit lattice synthesis and a self-contained path-Laplacian proof, including relative-interior certificates for all faces.

Black, Rehberg, and Sanyal identify poset tournament scores with lattice points of poset permutahedra \cite[Theorem~1.4]{BlackRehbergSanyal2025}. Rehberg's rational Ehrhart formulation leads to the strong-score coefficient conjecture for a chain \cite[Corollary~6.35 and Conjecture~6.36]{Rehberg2025}. We prove this specialization and isolate its exception at $d=2$. The enumeration and asymptotics of score sequences are developed in \cite{ClaessonEtAl2023,Kolesnik2023,Stockmeyer2023,BassanDonderwinkelKolesnik2026}.

The main additional contribution is the adjacent Eulerian repair cone and its application to eventual positivity. Compatibility and interlacing are established tools \cite{ChudnovskySeymour2007,LiuWang2007,SavageVisontai2015}, and Eulerian transformations have a substantial combinatorial and Ehrhart-theoretic literature \cite{BrandenJochemko2022,Athanasiadis2025}. Here the signed coordinates require a larger cone. The constant $2-\sqrt3$ is optimal within the stated adjacent-repair family; the possibility of other compatible representations remains open.

The primitive factors connect the proof to the classical probabilistic
interpretation of real-rooted polynomials and exponential tilting
\cite[Proposition~1 and pp.~284--285]{Pitman1997}. Quantitative local normal approximations are available
\cite{JongpreechaharnEtAl2024}. The estimate in
Lemma~\ref{lem:pb-smoothing} follows directly from adjacent
probability differences and Darroch's mode theorem \cite[Theorem~4]{Darroch1964}.

\subsection*{Organization of the paper}

Sections~\ref{sec:geometry}--\ref{sec:tournaments} establish the lattice geometry, volume, and tournament interpretation. Sections~\ref{sec:eulerian}--\ref{sec:repair} derive the refined coordinates and the repair criterion. Section~\ref{sec:finite} verifies the finite range. Sections~\ref{sec:reverse}--\ref{sec:wlc} prove eventual positivity by covering the lower tail indices with a forward Cauchy estimate and the remaining indices with reverse coefficient estimates. Section~\ref{sec:frontiers} formulates the uniform open problem.

\section{Pattern avoidance, deficits, and the path Laplacian}\label{sec:geometry}

To prove cubicality while retaining the lattice arithmetic needed for volume,
we first replace permutation coordinates by prefix
deficits. The original coordinates list the entries of a permutation, which is
natural for describing vertices but less convenient for seeing facets. Prefix
deficits measure how far each initial sum lies below its largest possible
value. In these coordinates, the two facet choices at every position become a
zero condition and a path-Laplacian condition. Cuts in a reverse-layered
permutation then appear as zeros of a piecewise parabolic profile.

Write $P_d=P_d(132,213)$. All its vertices lie in
\[
 \cL_d=\left\{x\in\Z^d:\sum_{i=1}^d x_i=\frac{d(d+1)}2\right\}.
\]
Normalized volume is measured in the difference lattice
\[
 \mathsf A_{d-1}=\left\{z\in\Z^d:\sum_{i=1}^d z_i=0\right\}.
\]

\subsection{The avoidance class}

To label the vertices canonically, we first describe the avoidance class
through cuts of reverse-layered permutations. The \emph{skew sum}
$\sigma\ominus\tau$ places a value-shifted copy of $\sigma$ before $\tau$. A
\emph{reverse-layered permutation} is a skew sum of increasing permutations.

\begin{lemma}\label{lem:reverse-layered}
The permutations in $\Av_d(132,213)$ are exactly the skew sums of increasing blocks. Hence they are indexed by subsets of $[d-1]$ and there are $2^{d-1}$ of them.
\end{lemma}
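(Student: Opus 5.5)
We prove the two inclusions separately and then count cuts.

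\emph{Skew sums of increasing blocks avoid both patterns.} Write $\pi=\iota_{k_1}\ominus\cdots\ominus\iota_{k_r}$ with increasing blocks. Every entry of an earlier block exceeds every entry of a later block, and within a block the entries increase. Take any three positions $i<j<l$.
\begin{itemize}[nosep]
\item If $\pi_i<\pi_j$, then $i$ and $j$ lie in the same block. For a $132$ we would need $\pi_l$ strictly between $\pi_i$ and $\pi_j$. But $l$ lies in that block, giving $\pi_l>\pi_j$, or in a later block, giving $\pi_l<\pi_i$. So no $132$ occurs.
\item A copy of $213$ needs $\pi_i>\pi_j$ and $\pi_l$ above both. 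Since $\pi_i>\pi_j$, position $j$ lies in a later block than $i$. Then $l$ lies in the block of $j$ or later, so $\pi_l<\pi_i$. So no $213$ occurs.
\end{itemize}

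\emph{Every avoider is a skew sum of increasing blocks.} We use induction on $d$. Let $\pi\in\Av_d(132,213)$ and let $p$ be the position of the value $d$.
\begin{itemize}[nosep]
\item \emph{The entries after $p$ are smaller than those before $p$.} Suppose $i<p<l$ with $\pi_i<\pi_l$. Then $\pi_i\,d\,\pi_l$ is a $132$.
\item \emph{The entries before $p$ increase.} Suppose $i<j<p$ with $\pi_i>\pi_j$. Then $\pi_i\,\pi_j\,d$ is a $213$.
\end{itemize}
So the prefix through position $p$ is increasing. Its values are exactly the top $p$ values $d-p+1,\ldots,d$. Hence $\pi=\iota_p\ominus\pi'$, where $\pi'$ is the standardization of the suffix. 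Pattern avoidance is inherited by subsequences, so $\pi'$ again avoids $132$ and $213$. By induction $\pi'$ is a skew sum of increasing blocks, and therefore so is $\pi$. The base case $d\le1$ is trivial.

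\emph{Counting.} Skew sums of increasing blocks of sizes $k_1,\ldots,k_r$ correspond bijectively to compositions of $d$. We take the blocks maximal; equivalently, block boundaries are exactly the descents, since consecutive blocks meet at a descent. A composition of $d$ is in turn determined by its set of partial sums $\{k_1,k_1+k_2,\ldots\}\subseteq[d-1]$, the cut set. This gives $2^{d-1}$ permutations.

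The only delicate step is fixing a canonical labelling, because the decomposition into increasing blocks is not unique unless the blocks are maximal. With maximal blocks, the cut set equals the descent set. This is the canonical indexing by subsets of $[d-1]$ used in the rest of the paper.
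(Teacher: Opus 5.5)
Your proof is correct and follows the paper's argument: locate the entry $d$, use $213$ to force the prefix before it to increase and $132$ to force that prefix to carry the largest values, induct on the suffix, and check the converse by examining three entries of a skew sum. One small correction: because every boundary between nonempty increasing skew summands is a descent, the decomposition is automatically unique, so the ``maximal blocks'' convention and the claimed non-uniqueness are unnecessary.
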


\begin{proof}
Suppose that the maximum entry $d$ occurs in position $b$. The entries before $d$ are increasing, since a descent followed by $d$ would form $213$. Every entry before $d$ exceeds every entry after it, since an increasing cross-pair together with $d$ would form $132$. Thus the first $b$ entries are the $b$ largest values in increasing order. Induction applies to the suffix. Conversely, three entries chosen from a skew sum of increasing blocks have pattern $123$, $231$, $312$, or $321$. A cut after each selected position in $[d-1]$ specifies the blocks uniquely.
\end{proof}

For $1\le k<d$, set
\[
 S_k(x)=\sum_{i=1}^k x_i,
 \qquad
 M_k=\frac{k(2d-k+1)}2.
\]
Consider the paired inequalities
\begin{align}
 A_k&: x_k-x_{k+1}\ge-1,\label{eq:A-facet}\\
 B_k&: S_k(x)\le M_k.\label{eq:B-facet}
\end{align}
Define prefix deficits
\begin{equation}\label{eq:deficits}
 u_k=M_k-S_k(x),\qquad u_0=u_d=0.
\end{equation}
The inverse transformation is
\begin{equation}\label{eq:inverse-deficit}
 x_k=d-k+1+u_{k-1}-u_k.
\end{equation}
Let $L$ be the Dirichlet path Laplacian,
\[
 (Lu)_k=2u_k-u_{k-1}-u_{k+1}.
\]
Then \eqref{eq:A-facet} and \eqref{eq:B-facet} become
\begin{equation}\label{eq:Qd}
 \cQ_d=\left\{u\in\R^{d-1}:u_k\ge0,\ (Lu)_k\le2\text{ for }1\le k<d\right\}.
\end{equation}

The two alternatives now have a direct geometric meaning. The facet $B_k$ is
the zero condition $u_k=0$, and the facet $A_k$ is the curvature condition
$(Lu)_k=2$. Choosing one condition at each position will label the vertices by
subsets of $[d-1]$.

Figure~\ref{fig:low-dimensional-deficits} compares the first two nontrivial exact realizations. In $\cQ_3$, the two line styles identify the paired facet families of \eqref{eq:Qd}. In the projected realization of $\cQ_4$, edges with the same style toggle the same cut, so the cubical adjacency is visible together with the metric deformation of the cube.

\begin{figure}[t]
\centering
\includegraphics[width=.86\textwidth]{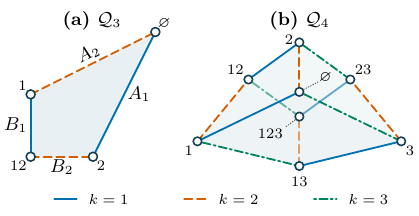}
\caption{Deficit geometry in affine dimensions two and three. Vertex labels are cut sets, with braces suppressed. In (a), the style indexed by $k$ identifies the facet pair $A_k,B_k$ of $\cQ_3$. In (b), it identifies the cut $k$ toggled along an edge of $\cQ_4$. The drawing in (b) is an orthographic projection; faint boundary edges lie inside its projected outline.}
\label{fig:low-dimensional-deficits}
\end{figure}

The cubical adjacency does not determine the metric realization. Set
$\widehat u_k=u_k/[k(d-k)]$, and write $\widehat{\cQ}_d$ for the image of
$\cQ_d$ under this diagonal normalization. Figure~\ref{fig:deficit-deformation}
compares the cube graphs with projections of the normalized deficit realizations for permutation sizes $d=4$ and $d=5$. Their affine dimensions are $3$ and $4$, respectively. The diagonal normalization is used to compare shapes; the lattice and volume calculations use the original $u$-coordinates.

\begin{figure}[t]
\centering
\includegraphics[width=.96\textwidth]{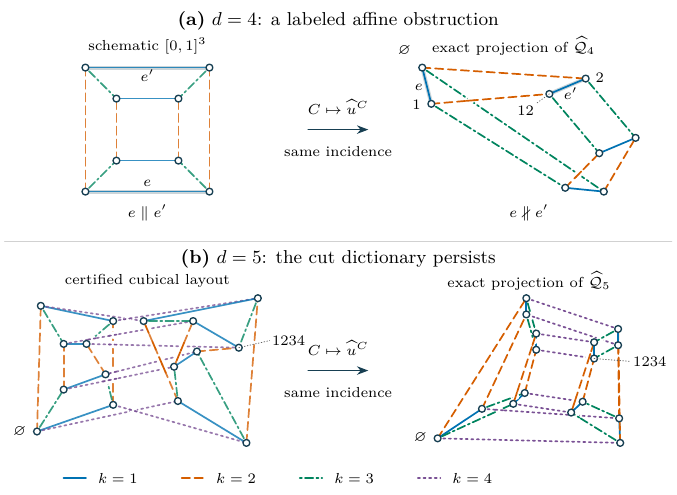}
\caption{The cut correspondence preserves incidence while changing shape. Part (a) compares the cube graph with a projection of the three-dimensional polytope $\widehat{\cQ}_4$. The opposite edges $e=(\varnothing,\{1\})$ and $e'=(\{2\},\{1,2\})$ have nonparallel images in deficit coordinates. Part (b) shows all $16$ vertices and $32$ edges for the four-dimensional polytope $\widehat{\cQ}_5$. A style indexed by $k$ records the toggled cut. The drawings on the left are schematic graph layouts; those on the right are linear projections of the normalized vertices. Crossings away from marked vertices carry no incidence information.}
\label{fig:deficit-deformation}
\end{figure}

For a concrete instance of the vertex dictionary,
\[
 \{1,3\}\longmapsto u^{\{1,3\}}=(0,1,0)
 \longmapsto 4231.
\]
The affine obstruction in Figure~\ref{fig:deficit-deformation}(a) can also be read directly from the coordinates:
\[
 \widehat u^{\{1\}}-\widehat u^{\varnothing}=(-1,-1/2,-1/3),
 \qquad
 \widehat u^{\{1,2\}}-\widehat u^{\{2\}}=(-1/3,0,0).
\]
These are opposite edges of the face on which cut $3$ is absent. Their unequal directions certify a quadrilateral face that is not a parallelogram, and hence an obstruction to affine equivalence with the standard cube.

The polytope $\cQ_d$ is bounded and full-dimensional. Every $u\in\cQ_d$ satisfies
\[
 0\le u_k\le k(d-k).
\]
Indeed, let $h_k=k(d-k)$, with $h_0=h_d=0$. Substitution gives
$2h_k-h_{k-1}-h_{k+1}=2$, so $Lh=2\mathbf1$. Put $w=h-u$.
Then $Lw\ge0$ and $w$ vanishes at the two boundary indices. If $w_k<0$
were an interior minimum, then
$2w_k-w_{k-1}-w_{k+1}\le0$. Since $Lw\ge0$, equality would hold and
both neighbors would equal $w_k$. Repeating the argument reaches a boundary
index, where $w=0$, a contradiction. Thus $w\ge0$. Full dimension follows
because $\varepsilon h$ satisfies every defining inequality strictly whenever
$0<\varepsilon<1$.

\begin{lemma}\label{lem:signatures}
Every vertex of $\cQ_d$ activates exactly one inequality from each pair $u_k\ge0$ and $(Lu)_k\le2$. Every binary choice of one member of each pair has a unique feasible realization.
\end{lemma}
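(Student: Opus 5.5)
We prove the two assertions in the reverse order. First we show that each binary choice determines a unique point satisfying the chosen equalities and all remaining inequalities. Then we show that every vertex activates exactly one member of each pair.

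Fix a cut set $C\subseteq[d-1]$. We impose $u_k=0$ for $k\in C$ and $(Lu)_k=2$ for $k\notin C$, with $u_0=u_d=0$. The zeros at $C\cup\{0,d\}$ split the path into intervals. On each maximal run $a<k<b$ of consecutive indices outside $C$, with $a,b\in C\cup\{0,d\}$, the equations $(Lu)_k=2$ form a Dirichlet path-Laplacian system on that run. Its Dirichlet Laplacian is positive definite, so the system has a unique solution, namely $u_k=(k-a)(b-k)$. This is the piecewise parabolic profile announced earlier. So the $d-1$ chosen equalities determine a unique point $u^C$.

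Next we check feasibility. On each run, $u^C_k=(k-a)(b-k)>0$, so the inactive zero conditions hold, and in fact strictly. At $k\in C$ with neighbors $k\pm1$, we have $(Lu^C)_k=-u^C_{k-1}-u^C_{k+1}\le0<2$, so the inactive curvature conditions hold strictly. Hence $u^C\in\cQ_d$. Because $d-1$ linearly independent inequalities are tight there, $u^C$ is a vertex. At $u^C$ exactly one member of each pair is tight: for $k\notin C$ we have $u_k>0$, and for $k\in C$ we have $(Lu)_k<2$.

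For the converse, let $v$ be any vertex. A vertex of the $(d-1)$-dimensional polytope $\cQ_d$ has at least $d-1$ linearly independent active constraints. We first rule out both members of a pair being active at the same index $k$. Suppose $u_k=0$ and $(Lu)_k=2$. Then $u_{k-1}+u_{k+1}=-2<0$, which contradicts $u_{k\pm1}\ge0$; at the boundary this uses $u_0=u_d=0$. So each index contributes at most one active constraint. Since there are exactly $d-1$ indices and we need at least $d-1$ active constraints, each index contributes exactly one. Hence $v$ satisfies one equality from each pair, and by uniqueness it equals $u^C$ for the corresponding $C$.

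The only delicate point is the exclusion of double activity, which the one-line sign argument above settles. We should also confirm that uniqueness is meant among feasible points satisfying the chosen equalities; this is immediate, because the linear system itself already has a unique solution.
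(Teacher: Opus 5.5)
Your proof is correct and takes essentially the same route as the paper: you exclude simultaneous activity at an index by the sign argument (the paper phrases it as $u_k=0\Rightarrow(Lu)_k\le0$), you count the $d-1$ independent active normals at a vertex, and you realize each signature by the piecewise parabola $(k-a)(b-k)$ between consecutive cuts. The only cosmetic difference is how uniqueness is obtained: you use positive definiteness of the Dirichlet Laplacian on each run, whereas the paper notes that a homogeneous solution is affine between consecutive cuts and vanishes at their endpoints.
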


\begin{proof}
If $u_k=0$, nonnegativity of its neighbors gives $(Lu)_k\le0$, so the Laplacian inequality is strict. Thus a feasible point activates at most one inequality from each pair. A vertex in dimension $d-1$ needs $d-1$ independent active normals and therefore activates one from every pair.

Let $C$ be the set of indices at which $u_k=0$, and adjoin $0,d$. Between consecutive cuts $a<b$, define
\begin{equation}\label{eq:parabola}
 u_k^C=(k-a)(b-k),\qquad a\le k\le b.
\end{equation}
At a cut, $u_k^C=0$ and the paired inequality is strict. At a noncut, $u_k^C>0$ and $(Lu^C)_k=2$. For uniqueness, the homogeneous system is affine on every interval between consecutive cuts and vanishes at its endpoints. It is zero on each interval.
\end{proof}

The preceding argument also proves that the $d-1$ active normals at each $u^C$ are independent: their affine equations have the unique solution $u^C$. The inverse formula \eqref{eq:inverse-deficit} sends $u^C$ to the reverse-layered permutation with cut set $C$. Lemma~\ref{lem:reverse-layered} therefore identifies all vertices of $\cQ_d$ with the generating points of $P_d$. Since $\cQ_d$ is a bounded intersection of half-spaces, it is the convex hull of its vertices. Hence the inverse deficit map sends $\cQ_d$ onto $P_d$. This establishes the vertex dictionary; the next result confirms that every displayed inequality is genuinely a facet.

An $H$-description specifies a polytope by linear inequalities within its affine hull. It is irredundant when every inequality is necessary.

\begin{theorem}[Facets of $P_d$]\label{thm:H-description}
For $d\ge2$, equations \eqref{eq:A-facet}, \eqref{eq:B-facet}, and $\sum_i x_i=d(d+1)/2$ form a complete irredundant $H$-description of $P_d$.
\end{theorem}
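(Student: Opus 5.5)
The argument has two parts: completeness, then irredundancy.

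\emph{Completeness.} The inverse deficit map \eqref{eq:inverse-deficit} is an affine bijection between $\R^{d-1}$ and the hyperplane $\sum_i x_i=d(d+1)/2$. Under this map $A_k$ corresponds to $(Lu)_k\le2$ and $B_k$ corresponds to $u_k\ge0$. The discussion after Lemma~\ref{lem:signatures} shows that the map sends $\cQ_d$ onto $P_d$. Hence the $2(d-1)$ inequalities, together with the equation, cut out exactly $P_d$, and the $H$-description is complete. Because $\cQ_d$ is full-dimensional in $\R^{d-1}$, the polytope $P_d$ has dimension $d-1$ and its affine hull is the stated hyperplane. So it remains to show that none of the $2(d-1)$ inequalities can be dropped. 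It suffices to do this in $u$-coordinates.

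\emph{Irredundancy.} For each inequality I will exhibit $d-1$ vertices of $\cQ_d$ that lie on it and whose affine span is all of the corresponding hyperplane. Such a set of vertices spans a face of dimension $d-2$, so the inequality defines a facet. I will use the vertex labelling of Lemma~\ref{lem:signatures}.
\begin{itemize}
\item For $u_k\ge0$, take the vertices $u^C$ with $k\in C$.
\item For $(Lu)_k\le2$, take the vertices $u^C$ with $k\notin C$.
\end{itemize}
Each family is the vertex set of a subcube of dimension $d-2$. The simplest approach is to take $d-1$ of these vertices in a chain: $C_0\subset C_1\subset\cdots\subset C_{d-2}$, each containing $k$ (respectively each avoiding $k$). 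I then need these vertices to be affinely independent. Here is why that should hold. The point $u^C$ is the unique solution of its active system, so the active normals at $u^C$ are independent. The face $F=\{u_k=0\}\cap\cQ_d$ is itself a polytope of dimension at most $d-2$. By Lemma~\ref{lem:signatures}, each of its $2^{d-2}$ vertices is simple within $F$: it has exactly $d-2$ further active constraints, and these are independent.

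There is an alternative route that I find cleaner. Produce a point on the hyperplane at which every other inequality is strict. For the facet $u_k=0$, I would use the profile $\varepsilon u^{\{k\}}$ with $0<\varepsilon<1$, which is the two-parabola profile vanishing at $0$, $k$ and $d$. Check each constraint:
\begin{itemize}
\item At $j\ne k$ we have $u_j>0$, and $(Lu)_j=2\varepsilon<2$.
\item At $k$ itself, $(Lu)_k=-\varepsilon(k-1+d-k-1)\le0<2$.
\end{itemize}
So the hyperplane $u_k=0$ meets $\cQ_d$ in a relative-interior point at which only that one inequality is tight. Such a point forces the inequality to be a facet, because the face it cuts out contains a small $(d-2)$-ball around that point within the hyperplane.

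For the facet $(Lu)_k=2$, I would start from the full parabola $h$ and scale it off $k$. One candidate is $u=\varepsilon h+(1-\varepsilon)w$, where $w$ solves $(Lw)_j=0$ for $j\ne k$ and $(Lw)_k=2$. This $w$ is a tent function, a multiple of the Green's function, and it is positive at every interior index. Then:
\begin{itemize}
\item $(Lu)_k=2\varepsilon+2(1-\varepsilon)=2$, so the target inequality is tight;
\item $(Lu)_j=2\varepsilon<2$ for $j\ne k$;
\item $u_j>0$ for all $j$.
\end{itemize}
This gives the required relative-interior certificate for $A_k$.

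The step that needs care is checking that these certificate points really satisfy every other inequality strictly, including the edge cases $k=1$, $k=d-1$, and $d=2$. For $d=2$ the polytope is a segment, and the two inequalities $u_1\ge0$ and $u_1\le1$ are its two distinct endpoints, so it is covered. A second point to watch is that the $2(d-1)$ inequalities define distinct facets. This holds because their normals $e_k$ and $Le_k$ are pairwise non-proportional.
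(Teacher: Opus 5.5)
Your argument is correct and follows the same strategy as the paper. Completeness comes from the vertex dictionary after Lemma~\ref{lem:signatures}. Each inequality is then certified as a facet by a point of its hyperplane in $\cQ_d$ at which every other inequality is strict, so a small relative ball around that point stays feasible. Only the witness differs. The paper uses the barycenter of the $2^{d-2}$ vertices $u^C$ extending the chosen signature coordinate; positivity of every other slack then comes from the vertex classification and the pair-exclusivity in Lemma~\ref{lem:signatures}. That averaging works verbatim for every partial signature, which is how the paper reuses it in Theorem~\ref{thm:cube-face-lattice}. Your witnesses $\varepsilon u^{\{k\}}$ and the Green's-function tent $w$ are checked by computing $u$ and $Lu$ directly, so the irredundancy half of your proof needs no information about vertices at all. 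In fact $w$ itself, i.e.\ $\varepsilon=0$, already suffices. Two small clean-ups are worth making. First, the unfinished chain-of-vertices route can simply be deleted. Second, your closing remark that $e_k$ and $Le_k$ are never proportional fails at $d=2$, where $Le_1=2e_1$. It is also unnecessary: a witness at which exactly one inequality is tight already shows that the $2(d-1)$ inequalities define pairwise distinct facets and that none can be dropped.
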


\begin{proof}
Fix one signature coordinate and average the $2^{d-2}$ vertices extending that choice. The prescribed inequality is active at every vertex in the average. For any other index, half of the extensions choose each member of the corresponding pair; the unchosen inequality is strict at each relevant vertex, so its average slack is positive. The chosen normal is nonzero and belongs to the independent active-normal set of any complete extension. Its supporting hyperplane consequently has codimension one in the affine hull. Because all other slacks are positive at the barycenter, a sufficiently small relative neighborhood of that point inside the supporting hyperplane remains feasible. The supported face has dimension $d-2$. Both choices occur at every coordinate, so every listed inequality defines a facet and none can be removed.
\end{proof}

\subsection{The full cube face lattice}

To complete the cubicality proof, we must identify every face and its
incidences. A partial signature belongs to $\{A,B,*\}^{d-1}$. It records which
paired facet choices are fixed and which remain free.

\begin{theorem}[Cube face lattice]\label{thm:cube-face-lattice}
Nonempty faces of $P_d$ are in bijection with partial signatures. A signature fixing $r$ positions defines a face of dimension $d-1-r$ with $2^{d-1-r}$ vertices. In particular, $P_d$ is combinatorially a $(d-1)$-cube.
\end{theorem}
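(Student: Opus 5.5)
We work throughout in deficit coordinates. The inverse deficit map is an affine bijection from $\cQ_d$ onto $P_d$, so it preserves faces and incidences. A nonempty face of $\cQ_d$ is the intersection of $\cQ_d$ with the set of inequalities active on it. To a partial signature $\sigma\in\{A,B,*\}^{d-1}$ we attach the set
\[
F_\sigma=\{u\in\cQ_d:\ u_k=0\text{ if }\sigma_k=B,\ (Lu)_k=2\text{ if }\sigma_k=A\}.
\]
We will show three things. First, every nonempty face equals some $F_\sigma$. Second, distinct signatures give distinct faces. Third, $F_\sigma$ is the convex hull of the $2^{d-1-r}$ vertices $u^C$ whose full signatures extend $\sigma$, and it has dimension $d-1-r$.

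For the first point, take a nonempty face $F$ and let $\sigma$ record the active inequalities on all of $F$. By the first step of the proof of Lemma~\ref{lem:signatures}, no feasible point activates both members of a pair, so $\sigma$ is a well-defined partial signature. Then $F=F_\sigma$.

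For the vertex set, the vertices of $F_\sigma$ are exactly the vertices of $\cQ_d$ lying in it. By Lemma~\ref{lem:signatures}, these are the $u^C$ whose full signatures extend $\sigma$, and there are $2^{d-1-r}$ of them. Since $F_\sigma$ is a face, it is the convex hull of these vertices, and so it is nonempty. This vertex count also shows that distinct signatures give distinct faces.

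For the dimension, the upper bound $\dim F_\sigma\le d-1-r$ holds because the $r$ active normals are linearly independent. They form a subset of the active-normal set at any complete extension, and that set is independent by the remark following Lemma~\ref{lem:signatures}. For the lower bound we repeat the barycenter argument of Theorem~\ref{thm:H-description}. Average the $2^{d-1-r}$ extending vertices. At every free index $k$, half of these vertices have $u_k=0$ and the other half have $(Lu)_k=2$. At each vertex the unchosen member of a pair is strict, so both inequalities at index $k$ have positive average slack. Hence the barycenter satisfies all free inequalities strictly. A small neighborhood of the barycenter inside the affine subspace cut out by the $r$ fixed equations therefore stays in $\cQ_d$. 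That subspace has dimension $d-1-r$, so $\dim F_\sigma=d-1-r$.

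To conclude cubicality, observe that $F_\sigma\subseteq F_\tau$ if and only if $\sigma$ refines $\tau$, meaning $\sigma$ agrees with $\tau$ wherever $\tau$ is fixed. In one direction, refinement adds equations and so shrinks the set. Conversely, if $F_\sigma\subseteq F_\tau$, compare the vertex sets: the extensions of $\sigma$ must all be extensions of $\tau$, which forces refinement. Partial signatures ordered by refinement, with the empty face adjoined, form exactly the face lattice of the $(d-1)$-cube. Here $*$ corresponds to a free coordinate of the cube, and $A$ and $B$ correspond to the two fixed values $0$ and $1$.

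The main obstacle is the lower bound on dimension. It requires showing that the barycenter is strictly feasible for both members of every free pair simultaneously, and this rests on the fact that at every vertex exactly one member of each pair is active.
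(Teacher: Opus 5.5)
Your proposal is correct and follows essentially the same route as the paper: averaging the extending vertices to get the dimension, getting independence of the fixed normals from the active normals of a complete extension, and attaching a partial signature to an arbitrary face because at most one member of each pair can be active. You read off the signature from the inequalities tight on all of $F$, where the paper uses a point of $\relint(F)$; the two are equivalent, and your explicit refinement-order matching with the faces of $[0,1]^{d-1}$ spells out the final cubicality claim more fully than the paper does. One small step should be stated explicitly: the neighborhood argument also needs the partner inequality at each fixed index to be strict at the barycenter, which follows from the fact you already cite, since the unchosen member is strict at every extending vertex.
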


\begin{proof}
Let $I_A,I_B\subseteq[d-1]$ be disjoint and impose $(Lu)_k=2$ for $k\in I_A$ and $u_k=0$ for $k\in I_B$. The resulting set is an exposed face because it is the zero set of a sum of nonnegative slacks. Average all vertices whose complete signatures extend this partial signature. At a prescribed index the prescribed facet is active and its partner is strict. At a free index, some extensions select each choice, so both averaged slacks are positive. Hence the barycenter has exactly the prescribed active facets.

The selected normals are independent because they are a subset of the $d-1$ independent active normals of any complete extension. Their common affine subspace has dimension $d-1-r$. Every unselected inequality has positive slack at the barycenter, so a sufficiently small relative neighborhood in this affine subspace remains feasible. The exposed face therefore has dimension $d-1-r$, and its vertices are precisely the $2^{d-1-r}$ complete extensions.

Conversely, let $F$ be a nonempty face and choose $v\in\relint(F)$, where $\relint(F)$ denotes the interior of $F$ in its affine hull. A facet contains $F$ exactly when its defining inequality is active at $v$. At most one member of each pair can be active, by Lemma~\ref{lem:signatures}; the active facets therefore determine a partial signature. The face obtained from that signature has the same containing facets as $F$, hence equals $F$. The active set also makes the signature unique.
\end{proof}

\section{A pulling triangulation and the volume formula}\label{sec:volume}

After cubicality, the remaining objective in Davis and Sagan's conjecture is
the normalized volume. A cubical face lattice
determines the incidence pattern of the polytope, while its normalized volume
still depends on the lattice realization. The cut-set labels retain that
arithmetic information. Adding one cut at a time produces a pulling
triangulation, and the path Laplacian turns each simplex determinant into a
product of interval lengths. Summing those products will connect the volume
calculation to Cayley's tree enumeration.

The linear part of the deficit map gives a unimodular isomorphism from the difference lattice $\mathsf A_{d-1}$ to $\mathbb Z^{d-1}$. Indeed, after the last coordinate is deleted, its matrix is lower triangular with diagonal entries $-1$. Hence normalized simplex volumes may be computed in $u$-coordinates.

Choose any total order of the vertices $u^C$ that refines nondecreasing
$\lvert C\rvert$. Recall that a pulling triangulation cones the first vertex
of each face over the recursively triangulated facets that omit it
\cite{GKZ2008}. In the cube face lattice of
Theorem~\ref{thm:cube-face-lattice}, every face has a unique extension with
the fewest cuts. Induction on the dimension of the face therefore shows that
the pulling simplices are precisely the chains whose cut sets satisfy
$\varnothing=C_0\subset C_1\subset\cdots\subset C_{d-1}=[d-1]$ with one cut added at each step. Pulling first at the unique minimum-cut vertex of every face, and then recursively in the facets that omit it, gives exactly these maximal chains. Hence the maximal simplices are indexed by permutations $\sigma=(\sigma_1,\ldots,\sigma_{d-1})$ of $[d-1]$:
\[
 \Delta_\sigma=\conv(u^{C_0},u^{C_1},\ldots,u^{C_{d-1}}),
 \qquad C_j=\{\sigma_1,\ldots,\sigma_j\}.
\]
The recursive coning construction also proves that these simplices cover the
polytope and intersect along common faces.

At step $j$, let $a_j<\sigma_j<b_j$ be the nearest earlier cuts or boundary points and put $\ell_j=b_j-a_j$. The difference
\[
 w_j=u^{C_{j-1}}-u^{C_j}
\]
is a tent function supported on $[a_j,b_j]$. If $p_j=\sigma_j-a_j$ and $q_j=b_j-\sigma_j$, then
\[
 Lw_j=\ell_j e_{\sigma_j}-q_je_{a_j}-p_je_{b_j},
\]
where boundary basis vectors are zero. The two off-diagonal terms correspond to cuts already present in $C_{j-1}$. Ordering rows by $\sigma_1,\ldots,\sigma_{d-1}$ therefore makes the matrix with columns $Lw_j$ triangular with diagonal $\ell_1,\ldots,\ell_{d-1}$. If $D_n$ denotes the determinant of the $n\times n$ Dirichlet path Laplacian, expansion along the last row gives $D_n=2D_{n-1}-D_{n-2}$ with $D_0=1$ and $D_1=2$. Thus $D_n=n+1$, and the present matrix $L$ has determinant $d$. Applying $L$ multiplies normalized determinants by $d$, so
\begin{equation}\label{eq:simplex-volume}
 \vol_{\mathrm{norm}}(\Delta_\sigma)=\frac1d\prod_{j=1}^{d-1}\ell_j.
\end{equation}

Let
\[
 \mathcal T_1=1,
 \qquad
 \mathcal T_d=\sum_{\sigma\in S_{d-1}}\prod_{j=1}^{d-1}\ell_j(\sigma).
\]
If the first cut is at $q$, its interval length is $d$. The remaining $d-2$ cuts split into an insertion history on $\{1,\ldots,q-1\}$ and one on $\{q+1,\ldots,d-1\}$. Choosing which $q-1$ later insertion times belong to the left interval contributes $\binom{d-2}{q-1}$, and subsequent interval lengths are exactly those of the two smaller histories. Summing over $q$ gives
\begin{equation}\label{eq:interval-recurrence}
 \mathcal T_d=d\sum_{q=1}^{d-1}\binom{d-2}{q-1}\mathcal T_q\mathcal T_{d-q}.
\end{equation}
The fixed-edge form of Cayley's tree enumeration is
\cite[Sec.~6.2]{Moon1970}
\begin{equation}\label{eq:cayley-convolution}
 \sum_{q=1}^{d-1}\binom{d-2}{q-1}q^{q-2}(d-q)^{d-q-2}=2d^{d-3}
\end{equation}
with the convention that the one-vertex tree count is $1$. Its left-hand side counts labeled trees on $[d]$ that contain the fixed edge $\{1,2\}$. After deleting that edge, if the component containing $1$ has size $q$, one chooses its other $q-1$ vertices and then chooses a tree on each component. The same class has size $2d^{d-3}$: the $d^{d-2}$ labeled trees have $(d-1)d^{d-2}$ edge incidences in total, and symmetry distributes these incidences equally among the $\binom d2$ possible edges. This proves \eqref{eq:cayley-convolution}. Assuming the formula for the two smaller intervals in every summand of \eqref{eq:interval-recurrence} gives
\[
 \mathcal T_d
 =d\,2^{d-2}\sum_{q=1}^{d-1}\binom{d-2}{q-1}
 q^{q-2}(d-q)^{d-q-2}.
\]
Equation~\eqref{eq:cayley-convolution} and the initial case
$\mathcal T_1=1$ therefore prove by induction that
\[
 \mathcal T_d=2^{d-1}d^{d-2}.
\]
Summing \eqref{eq:simplex-volume} proves the volume assertion in Theorem~\ref{thm:intro-geometry}.

The relation to Orevkov's discriminant polytope can be made explicit.
For $d\ge2$, set
\[
 b_0=d-x_1,\qquad b_k=x_k-x_{k+1}+1\ (1\le k<d),\qquad b_d=x_d-1.
\]
Then $\sum_{k=0}^d b_k=2(d-1)$ and
$\sum_{k=0}^d kb_k=d(d-1)$. At a cut-set vertex, the nonzero interior
coordinates are $b_k=b-a$, where $a<k<b$ are its neighboring cuts or
boundary points; the endpoint coordinates are the adjacent block lengths
minus one. These are Orevkov's discriminant vertices
\cite[Section~3]{Orevkov1999}. The inverse recursion
$x_1=d-b_0$, $x_{k+1}=x_k+1-b_k$ identifies the integer affine lattices.
Orevkov measures volume with a fundamental lattice parallelepiped of volume
one. Multiplying his value by $(d-1)!$ gives the normalized volume above.

\section{Chain permutahedra and tournament scores}\label{sec:tournaments}

To connect the geometric results with real-rootedness, we need a model of the
lattice points and the resulting Ehrhart coefficients. We identify $P_d$ with a scaled chain-poset
permutahedron, whose ordered coordinates satisfy classical prefix inequalities.
At the first interior dilate, these inequalities become the strict inequalities
that characterize score sequences of strong tournaments. This interpretation
will identify the top Ehrhart numerator coefficient.

Let $R_d=\Pi_{\cC_d}-\mathbf1$. The chain specialization of the defining inequalities for poset permutahedra \cite[Corollary~3.4]{BlackRehbergSanyal2025} says that, in ordered coordinates, a point $a=(a_1,\ldots,a_d)$ belongs to $mR_d$ precisely when
\begin{align}
 a_1&\le\cdots\le a_d,\label{eq:R-order}\\
 \sum_{i=1}^d a_i&=m\binom d2,\label{eq:R-total}\\
 \sum_{i=1}^k a_i&\ge m\binom k2\qquad(1\le k<d).\label{eq:R-prefix}
\end{align}

\begin{theorem}[Lattice equivalence with the chain-poset model]\label{thm:lattice-map}
The affine map
\begin{equation}\label{eq:F-map}
 \Psi_d(x)_i=d+i-1-x_i
\end{equation}
is a lattice equivalence from $P_d$ to $2R_d$.
\end{theorem}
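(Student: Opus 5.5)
The plan is to check three things directly from the formula \eqref{eq:F-map}: that $\Psi_d$ is a lattice isomorphism of the ambient affine lattices, that it carries the affine hull of $P_d$ onto the affine hull of $2R_d$, and that it carries the complete $H$-description of Theorem~\ref{thm:H-description} term by term onto the description \eqref{eq:R-order}--\eqref{eq:R-prefix} with $m=2$. Since Theorem~\ref{thm:H-description} already gives a complete description of $P_d$, no vertex or face analysis beyond it should be needed.

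First, the linear part of $\Psi_d$ is $-I$ and its translation vector $(d,d+1,\ldots,2d-1)$ is integral. Hence $\Psi_d$ is a bijection $\Z^d\to\Z^d$, and it restricts to a bijection from $\cL_d$ onto an affine translate of $\mathsf A_{d-1}$. For the total, $\sum_i(d+i-1)=d^2+\binom d2$, so
\[
\sum_i\Psi_d(x)_i=d^2+\binom d2-\frac{d(d+1)}2=d(d-1)=2\binom d2,
\]
which is exactly \eqref{eq:R-total} with $m=2$.

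Next I translate the paired inequalities. Write $a=\Psi_d(x)$. Then
\[
a_{k+1}-a_k=1+x_k-x_{k+1},
\]
so $A_k$ in \eqref{eq:A-facet} is equivalent to $a_k\le a_{k+1}$, and the family of all $A_k$ is \eqref{eq:R-order}. For the prefixes,
\[
\sum_{i\le k}a_i=kd+\binom k2-S_k(x)=u_k+M_k-kd+\binom k2 .
\]
Since $M_k=kd-\binom k2$, this equals $u_k+2\binom k2$, with $u_k$ the deficit \eqref{eq:deficits}. Thus $B_k$ in \eqref{eq:B-facet} is equivalent to \eqref{eq:R-prefix} with $m=2$. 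Because $\Psi_d$ is an affine bijection, it maps the polyhedron cut out by \eqref{eq:A-facet}, \eqref{eq:B-facet} and $\sum_i x_i=\binom{d+1}2$ onto the polyhedron cut out by \eqref{eq:R-order}--\eqref{eq:R-prefix} with $m=2$. By Theorem~\ref{thm:H-description} the first polyhedron is $P_d$, and by the chain specialization of \cite[Corollary~3.4]{BlackRehbergSanyal2025} the second is $2R_d$.

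I expect the main obstacle to be the convention check in that citation rather than any computation. The issue is to confirm that the coordinates in which \cite{BlackRehbergSanyal2025} write the chain permutahedron are exactly the labelled coordinates $a_1,\ldots,a_d$ here, so that the chain $1<\cdots<d$ forces $a_1\le\cdots\le a_d$ in this index order rather than after sorting. If the reference instead uses the reversed chain, or values $\{0,\ldots,d-1\}$ instead of $\{1,\ldots,d\}$, I would absorb this by composing with the coordinate reversal or the translation by $\mathbf1$ already built into $R_d=\Pi_{\cC_d}-\mathbf1$; both are lattice equivalences. As a sanity check, I would verify that the vertex with $C=[d-1]$, namely $x=(d,d-1,\ldots,1)$, maps to $(0,2,4,\ldots,2d-2)$. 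This is twice the translated natural linear extension, i.e.\ twice the point $(0,1,\ldots,d-1)$, and its score-sequence reading is the transitive tournament. The empty cut set should map to the constant point $(d-1,\ldots,d-1)$, which is twice the centre $\tfrac{d-1}2\mathbf1$.
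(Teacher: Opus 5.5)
Your proof is correct and is essentially the paper's. Like the paper, you check the total, show $A_k\iff a_k\le a_{k+1}$ and $B_k\iff\sum_{i\le k}a_i\ge2\binom k2$, and note that the linear part is $-I$; you also make explicit the reliance on Theorem~\ref{thm:H-description} and on the cited description of $2R_d$, which the paper uses implicitly. One small sign slip: your intermediate expression should read $u_k-M_k+kd+\binom k2$ rather than $u_k+M_k-kd+\binom k2$, though your final value $u_k+2\binom k2$ is right.
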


\begin{proof}
The affine equation for $P_d$ gives $\sum_i\Psi_d(x)_i=d(d-1)=2\binom d2$. Also,
\[
 \Psi_d(x)_{k+1}-\Psi_d(x)_k=x_k-x_{k+1}+1,
\]
so \eqref{eq:A-facet} is equivalent to \eqref{eq:R-order}. Finally,
\[
 \sum_{i=1}^k\Psi_d(x)_i=k(d-1)+\binom{k+1}{2}-S_k(x),
\]
and \eqref{eq:B-facet} is equivalent to
\[
 \sum_{i=1}^k\Psi_d(x)_i\ge2\binom k2.
\]
The linear part is $-I$, an automorphism of the difference lattice $\mathsf A_{d-1}$.
\end{proof}

Equivalently, if $a\in2R_d$, the prefix map
\begin{equation}\label{eq:a-to-u}
 u_k=\sum_{i=1}^k a_i-k(k-1)
\end{equation}
identifies the filter inequalities with $u_k\ge0$ and the order inequalities with $(Lu)_k\le2$. Its inverse is
\[
 a_k=u_k-u_{k-1}+2(k-1).
\]

The tournament interpretation comes from the highest numerator coefficient.
For a rational polytope, reciprocity identifies that coefficient with the
lattice points in the first dilate that meets the relative interior. Let
$h^*_{\Z}(R_d;t)$ be the rational Ehrhart numerator defined by
\[
 \sum_{m\ge0}\lvert mR_d\cap\Z^d\rvert t^m
 =\frac{h^*_{\Z}(R_d;t)}{(1-t^2)^d}.
\]
For a rational polytope $Q$ of dimension $r$ and denominator $q$, rational Ehrhart reciprocity \cite[Chapter~4]{BeckRobins2015} gives
\[
 \sum_{m\ge1}\lvert\relint(mQ)\cap\Z^n\rvert t^m
 =\frac{t^{q(r+1)}h^*_{\Z}(Q;t^{-1})}{(1-t^q)^{r+1}}.
\]
If $s$ is the smallest positive integer for which $\relint(sQ)$ contains a lattice point, comparison of the lowest power of $t$ on the left with the reversed numerator on the right shows that the highest nonzero numerator coefficient equals $\lvert\relint(sQ)\cap\Z^n\rvert$. This is the leading-coefficient interpretation used below.

\begin{proposition}[Interior points and strong score sequences]\label{prop:strong-bijection}
Adopt the standard convention that the one-vertex tournament is strong. For every $d\ge1$, the maps
\[
 a_i=s_i+i-1,
 \qquad
 s_i=a_i-i+1
\]
are inverse bijections between strong tournament score sequences and $\relint(2R_d)\cap\Z^d$.
\end{proposition}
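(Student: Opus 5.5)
The plan is to describe $\relint(2R_d)$ explicitly by strict inequalities, translate them through $s_i=a_i-i+1$, and then recognize the strict form of Landau's theorem (Harary--Moser) for strong score sequences.

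First, the relative interior. By Theorem~\ref{thm:H-description} transported through the lattice equivalence $\Psi_d$ of Theorem~\ref{thm:lattice-map}, the inequalities \eqref{eq:R-order} and \eqref{eq:R-prefix} with $m=2$, together with \eqref{eq:R-total}, form an irredundant facet description of $2R_d$ for $d\ge2$. Hence a point of the affine hull lies in $\relint(2R_d)$ exactly when every one of these inequalities is strict. For an integer point $a$ this means:
\[
 a_{k+1}\ge a_k+1\quad(1\le k<d),
 \qquad
 \sum_{i=1}^k a_i\ge k(k-1)+1\quad(1\le k<d),
 \qquad
 \sum_{i=1}^d a_i=d(d-1).
\]

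Next, substitute $a_i=s_i+i-1$. The strict order conditions become $s_{k+1}\ge s_k$, so $s$ is nondecreasing. The total becomes $\sum_i s_i=d(d-1)-\binom d2=\binom d2$. Each prefix condition becomes
\[
 \sum_{i=1}^k s_i\ge k(k-1)+1-\binom k2=\binom k2+1,
 \quad\text{i.e.}\quad \sum_{i=1}^k s_i>\binom k2 \quad(1\le k<d).
\]
These are exactly the Harary--Moser conditions: a nondecreasing integer sequence with total $\binom d2$ and strict prefix inequalities for $k<d$ is the score sequence of a strong tournament. Conversely, every strong score sequence satisfies these conditions, and integrality is preserved in both directions. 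So the two maps are mutually inverse bijections.

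Two side points need checking. Nonnegativity $s_1\ge0$ is implied by the $k=1$ prefix inequality, which gives $s_1\ge1$ when $d\ge2$; this is consistent with strong tournaments on at least two vertices, where no vertex has outdegree $0$. The degenerate cases also need a separate look. For $d=1$, $2R_1$ is the single point $0$, its relative interior is itself, and it corresponds to the one-vertex score sequence $(0)$ under the stated convention. For $d=2$ the interior is empty: the strict order condition forces $a_2\ge a_1+1$, while the strict prefix condition forces $a_1\ge1$, and together these are incompatible with $a_1+a_2=2$. This matches the fact that no two-vertex tournament is strong.

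The main obstacle I expect is justifying rigorously that "relative interior equals all facet inequalities strict" in the $a$-coordinates. That requires the irredundant description to transfer, which holds because $\Psi_d$ is an affine isomorphism sending the facets $A_k,B_k$ to the order and prefix inequalities. I will also cite Harary--Moser precisely, in its form for nondecreasing sequences.
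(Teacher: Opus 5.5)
Your proposal is correct and follows the paper's proof. Both arguments push the strict order, prefix, and total conditions through the shift $a_i=s_i+i-1$ and then invoke the strict Landau (Harary--Moser) characterization of strong score sequences. Your derivation of the strict-inequality description of $\relint(2R_d)$ from the irredundant facet description (Theorem~\ref{thm:H-description} transported by $\Psi_d$) and your separate checks of $d=1$ and $d=2$ make explicit what the paper asserts without comment.
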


\begin{proof}
A score sequence is counted here once as a nondecreasing integer vector, irrespective of how many labeled tournaments realize it. Landau's inequalities characterize tournament score sequences \cite{Landau1953}, and their strict form characterizes the strongly connected case \cite[Theorem~9]{HararyMoser1966}. Thus a nondecreasing integer sequence $s$ is a strong tournament score sequence exactly when
\[
 \sum_{i=1}^d s_i=\binom d2,
 \qquad
 \sum_{i=1}^k s_i>\binom k2\quad(1\le k<d).
\]
The shift $a_i=s_i+i-1$ makes $a$ strictly increasing and changes the total from $\binom d2$ to $2\binom d2$. For every $k<d$ it gives
\[
 \sum_{i=1}^ka_i=\sum_{i=1}^ks_i+\binom k2>2\binom k2.
\]
These are precisely the strict versions of \eqref{eq:R-order} and \eqref{eq:R-prefix}, together with the unchanged total equation, so they characterize $\relint(2R_d)$. Conversely, a strictly increasing integral $a\in\relint(2R_d)$ yields a nondecreasing integral sequence $s_i=a_i-i+1$ satisfying the strict Landau inequalities. The two constructions are inverse.
\end{proof}

For orientation, the first three nonconstant cases have respectively $1$, $3$, and $7$ strong score sequences for $d=4$, $5$, and $6$. These are the corresponding top coefficients of $H_d(t)$.

The first dilate $R_d$ has no interior lattice point for $d\ge2$. Strict order and the first strict prefix inequality would force $a_i\ge i$, contradicting $\sum_i a_i=\binom d2$. For every $d\ge3$, orient the edges of a Hamiltonian cycle cyclically and orient all remaining edges arbitrarily; the resulting tournament contains a directed spanning cycle and is strong. Hence the integral codegree is $2$ in this range. Reciprocity and Proposition~\ref{prop:strong-bijection} prove Theorem~\ref{thm:intro-tournament}. For $d=2$,
\[
 L_{R_2}(m)=\left\lfloor\frac m2\right\rfloor+1,
 \qquad
 \ehr_{\Z}(R_2;t)=\frac{1+t}{(1-t^2)^2},
\]
while no tournament on two vertices is strong.

\section{The refined Eulerian expansion}\label{sec:eulerian}

The next objective is to convert the lattice-point formula into a form where
the zeros of $H_d(t)$ can be controlled. We therefore express $H_d(t)$ in a
basis whose positive combinations are known to be real-rooted. Refined
Eulerian polynomials provide such a basis and interact well with insertion of
a new smallest or largest letter. The main work of this section is to derive
the coefficients of that expansion directly from the lattice-point model.

For $1\le i\le d$, define the refined Eulerian polynomial
\begin{equation}\label{eq:refined-eulerian}
 A_i(d,t)=\sum_{\substack{\pi\in S_d\\\pi_d=d+1-i}}t^{\operatorname{des}(\pi)}.
\end{equation}
These polynomials form a basis of the relevant Eulerian transform. Define
\begin{align}
 \mathsf L_df&=(1+(d-1)t)f+t(1-t)f',\label{eq:L-operator}\\
 \mathsf R_df&=t\bigl(df+(1-t)f'\bigr).\label{eq:R-operator}
\end{align}
Two elementary insertions give
\begin{equation}\label{eq:insertion-identities}
 A_i(d+1,t)=\mathsf L_dA_i(d,t),
 \qquad
 A_{i+1}(d+1,t)=\mathsf R_dA_i(d,t)
 \qquad(1\le i\le d).
\end{equation}

For the first identity, increase every letter of a permutation counted by $A_i(d,t)$ by one and insert a new minimum in one of the first $d$ slots. A permutation with $k$ descents has $k+1$ slots that preserve the number of descents and $d-1-k$ slots that add one, producing
$((k+1)t^k+(d-1-k)t^{k+1})$. Summing over $k$ is exactly the operator $\mathsf L_d$. For the second identity, insert a new maximum in one of the first $d$ slots. There are $k$ slots that preserve $k$ descents and $d-k$ slots that produce $k+1$, which gives $kt^k+(d-k)t^{k+1}$ and hence $\mathsf R_d$. The last letter has the value required by \eqref{eq:refined-eulerian} in both constructions.

To obtain the expansion coefficients, we first count lattice points with
nondecreasing coordinates, $a_1\le\cdots\le a_d$. This region is the closed
dominant chamber for the action of $S_d$ by coordinate permutations. Every
orbit of the full permutahedron has a unique representative in this chamber,
so Burnside averaging replaces the chamber count by fixed-point counts.
Organizing the cycles into connected blocks then makes the resulting
series amenable to the exponential formula. We now carry out this reduction
explicitly. If $\lambda=(\ell_1,\ldots,\ell_r)\vdash d$, let $m_j(\lambda)$ be the number of parts equal to $j$ and put
\[
 z_\lambda=\prod_{j\ge1}j^{m_j(\lambda)}m_j(\lambda)!
\]
and let $\operatorname{Part}([r])$ denote the set of set partitions of the cycles. Let $\Pi_d^{\mathrm{full}}$ be the permutahedron of $(0,1,\ldots,d-1)$. The polytope $R_d$ is the intersection of $\Pi_d^{\mathrm{full}}$ with this chamber, so every $S_d$-orbit of lattice points of $2m\Pi_d^{\mathrm{full}}$ has a unique nondecreasing representative in $2mR_d$. The lattice equivalence $P_d\cong_{\mathbb Z}2R_d$ and Burnside's orbit-counting lemma \cite[Lemma~7.2]{Stanley2018Algebraic} therefore give
\[
 L_{P_d}(m)=\frac1{d!}\sum_{\sigma\in S_d}
 \left|(2m\Pi_d^{\mathrm{full}})^\sigma\cap\mathbb Z^d\right|.
\]
For a permutation of cycle type $\lambda=(\ell_1,\ldots,\ell_r)$, the fixed-permutahedron formula of Ardila, Supina, and Vindas-Mel\'endez \cite[Theorem~1.1]{ArdilaSupinaVindas2020} applies at the even dilation $2m$, where every set partition of the cycles is admissible. Grouping the Burnside average by cycle type gives
\begin{equation}\label{eq:burnside-cycle-type}
 L_{P_d}(m)=\sum_{\lambda\vdash d}\frac1{z_\lambda}
 \sum_{\pi\in\operatorname{Part}([r])}
 \prod_{B\in\pi}
 \gcd(\ell_i:i\in B)
 \left(\sum_{i\in B}\ell_i\right)^{|B|-2}
 (2m)^{r-|\pi|}.
\end{equation}
For a singleton block the factor before $(2m)^0$ is one. This convention follows directly from the displayed expression.

Let $K(u,y)$ be the connected block series obtained by retaining one block in \eqref{eq:burnside-cycle-type}, and define $C_{d,q}$ by
\begin{equation}\label{eq:K-def}
 \exp K(u,y)=\sum_{d,q\ge0}C_{d,q}u^dy^q,
\end{equation}
The next lemma derives the closed form
\begin{equation}\label{eq:connected-burnside}
 [u^d]K(u,y)=\frac1{d^2}\sum_{h\mid d}\varphi(h)
 \binom{(1+y)d/h-1}{d/h-1}.
\end{equation}

All coefficients in \eqref{eq:K-def} are ordinary coefficients in $u$ and $y$. The cycle-index factors $1/z_\lambda$ in \eqref{eq:burnside-cycle-type} already contain the label normalization.

\begin{lemma}[Connected cycle-index reduction]\label{lem:connected-cycle-index}
Equations \eqref{eq:burnside-cycle-type} and \eqref{eq:K-def} imply \eqref{eq:connected-burnside}. Moreover,
\begin{equation}\label{eq:ehrhart-C}
 L_{P_d}(m)=\sum_q2^qC_{d,q}m^q.
\end{equation}
\end{lemma}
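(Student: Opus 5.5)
The plan is to read \eqref{eq:burnside-cycle-type} as an exponential-formula sum over labeled structures, identify $K$, and then evaluate the connected part by Möbius-type inversion of the gcd.

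\textbf{Step 1: rewrite as a sum over permutations.} Since $d!/z_\lambda$ is the number of permutations of cycle type $\lambda$, the right side of \eqref{eq:burnside-cycle-type} equals
\[
\frac1{d!}\sum_{\sigma\in S_d}\ \sum_{\pi\in\operatorname{Part}(\mathrm{cyc}(\sigma))}\ \prod_{B\in\pi} w(B)\,(2m)^{|B|-1},
\qquad
w(B)=\gcd(\ell_i:i\in B)\Bigl(\sum_{i\in B}\ell_i\Bigr)^{|B|-2}.
\]
This uses $r-|\pi|=\sum_B(|B|-1)$.

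\textbf{Step 2: decompose into connected blocks.} A pair $(\sigma,\pi)$ is the same as a set partition of $[d]$ into the supports of the blocks, together with a permutation on each support. Within a support, all cycles are declared to lie in one block. Since the weight is multiplicative over blocks, the exponential formula gives
\[
\sum_d u^d\,\frac1{d!}\sum_{(\sigma,\pi)}\prod_{B\in\pi}w(B)\,y^{|B|-1}=\exp K(u,y),
\]
where
\[
[u^n]K(u,y)=\frac1{n!}\sum_{\tau\in S_n}\gcd(\text{cycle lengths of }\tau)\,n^{r(\tau)-2}\,y^{r(\tau)-1}.
\]
This is exactly the series obtained by retaining one block. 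Specializing $y=2m$ and reading off $[u^d]$ gives \eqref{eq:ehrhart-C}. I would check the singleton convention here: a one-cycle block of length $\ell$ has weight $\ell\cdot\ell^{-1}=1$, which matches the remark after \eqref{eq:burnside-cycle-type}.

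\textbf{Step 3: evaluate the connected part.} Write $\gcd(\ell_1,\dots,\ell_r)=\sum_{h\mid \ell_i\,\forall i}\varphi(h)$. For fixed $h\mid n$ with $n=hk$, the permutations of $[n]$ whose cycle lengths are all divisible by $h$, weighted by $z^{r}$, have exponential generating function
\[
\exp\Bigl(z\sum_{j\ge1}\frac{x^{hj}}{hj}\Bigr)=(1-x^h)^{-z/h}.
\]
Hence their weighted count is $n!\binom{z/h+k-1}{k}$. Substituting $z=ny$ gives $z/h=ky$. Therefore
\[
[u^n]K=\frac1{n^2y}\sum_{h\mid n}\varphi(h)\binom{k(1+y)-1}{k}.
\]
Finally, $\binom{N-1}{k}=\frac{N-k}{k}\binom{N-1}{k-1}$ with $N=k(1+y)$ turns the factor $1/y$ into $\binom{(1+y)k-1}{k-1}$. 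Since $k=n/h$, this is \eqref{eq:connected-burnside}.

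\textbf{Main obstacle.} The delicate step is the bookkeeping in Step 2. One must confirm that the label normalization $1/z_\lambda$ together with the set partitions of cycles really matches the exponential formula with ordinary coefficients in $u$. In particular, the factor $1/d!$ from Burnside must combine correctly with the multinomial distribution of labels among blocks. One must also confirm that the negative power $n^{-2}$, and the $1/y$ cancelled in Step 3, cause no issue. They do not, since the identity is polynomial in $y$ once the binomial rewrite is made.
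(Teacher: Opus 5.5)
Your proof is correct and follows essentially the same route as the paper. Both arguments identify $K$ through the labeled exponential formula, using $r-|\pi|=\sum_B(|B|-1)$ and $y=2m$. Both then expand the gcd as a sum of $\varphi(h)$ and cancel the $1/y$ via $\binom{N-1}{k}=y\binom{N-1}{k-1}$. The one cosmetic difference is that you read $(1-x^h)^{-ny/h}$ directly as the exponential generating function of permutations with all cycle lengths divisible by $h$, whereas the paper substitutes $\ell_i=hm_i$ and sums ordered compositions through $[x^n](-\log(1-x))^r$.
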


\begin{proof}
The contribution of one block of $r$ cycles with lengths $\ell_1,\ldots,\ell_r$ and total length $d$ is
\[
 \gcd(\ell_1,\ldots,\ell_r)d^{r-2}y^{r-1}.
\]
The cycle-index weight of these cycles is $(r!\ell_1\cdots\ell_r)^{-1}$ after summing over ordered length lists. Hence the connected block series has coefficient
\begin{equation}\label{eq:connected-composition}
 [u^d]K(u,y)=
 \sum_{r\ge1}\frac{d^{r-2}y^{r-1}}{r!}
 \sum_{\substack{\ell_1+\cdots+\ell_r=d\\ \ell_i\ge1}}
 \frac{\gcd(\ell_1,\ldots,\ell_r)}{\ell_1\cdots\ell_r}.
\end{equation}
Use $\gcd(\ell_1,\ldots,\ell_r)=\sum_{h\mid\ell_1,\ldots,\ell_r}\varphi(h)$. For a fixed divisor $h\mid d$, put $n=d/h$ and $\ell_i=hm_i$. Since
\[
 \sum_{\substack{m_1+\cdots+m_r=n\\m_i\ge1}}
 \frac1{m_1\cdots m_r}
 =[x^n]\bigl(-\log(1-x)\bigr)^r,
\]
equation \eqref{eq:connected-composition} becomes
\begin{align*}
 [u^d]K(u,y)
 &=\sum_{h\mid d}\frac{\varphi(h)}{h^2n^2}
 [x^n]\frac{(1-x)^{-ny}-1}{y}\\
 &=\frac1{d^2}\sum_{h\mid d}\varphi(h)
 \binom{(1+y)d/h-1}{d/h-1}.
\end{align*}
The last equality uses
\[
 \frac1y[x^n](1-x)^{-ny}
 =\frac1y\binom{ny+n-1}{n}
 =\binom{ny+n-1}{n-1},
\]
interpreted polynomially at $y=0$. This proves \eqref{eq:connected-burnside}.

A set partition of the cycles is a set of connected blocks. The labeled-set exponential formula \cite[Section~II.5]{FlajoletSedgewick2009}, applied with the cycle-index weights already present in $1/z_\lambda$, turns the product over blocks in \eqref{eq:burnside-cycle-type} into $\exp K(u,y)$. Since $r-|\pi|=\sum_{B\in\pi}(|B|-1)$, setting $y=2m$ gives
\[
 L_{P_d}(m)=[u^d]\exp K(u,2m)
 =\sum_{q\ge0}C_{d,q}(2m)^q,
\]
which is \eqref{eq:ehrhart-C}.
\end{proof}

The Ehrhart polynomial is now expressed as a sum of powers $m^q$. The
classical Worpitzky identity \cite[Proposition~1.4.4]{Stanley2012} converts
these powers into an Eulerian numerator. The following lemma proves the
refined form needed here, which resolves that numerator in the basis
$A_i(d,t)$.

Define the refined Eulerian coordinates
\begin{equation}\label{eq:magic-coordinates}
 a_{d,j}=\sum_{q=0}^j(-1)^{j-q}
 \binom{d-1-q}{j-q}2^qC_{d,q}.
\end{equation}
For completeness, define $\mathcal E_q(t)$ by
\[
 \sum_{m\ge0}m^qt^m=\frac{\mathcal E_q(t)}{(1-t)^{q+1}},
 \qquad \mathcal E_0(t)=1.
\]
\begin{lemma}[Refined Worpitzky transform]\label{lem:refined-worpitzky}
For $1\le i\le d$,
\begin{equation}\label{eq:refined-series}
 \frac{A_i(d,t)}{(1-t)^d}
 =\sum_{m\ge0}m^{i-1}(m+1)^{d-i}t^m,
\end{equation}
where $0^0=1$. Consequently, for $0\le q\le d-1$,
\begin{equation}\label{eq:refined-worpitzky}
 \mathcal E_q(t)(1-t)^{d-1-q}
 =\sum_{j=q}^{d-1}(-1)^{j-q}\binom{d-1-q}{j-q}A_{j+1}(d,t).
\end{equation}
\end{lemma}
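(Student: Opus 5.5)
Our plan has two parts: first prove \eqref{eq:refined-series} by induction on $d$ using the insertion identities \eqref{eq:insertion-identities}, then deduce \eqref{eq:refined-worpitzky} by binomial expansion.

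\textbf{Base case.} For $d=1$ we have $A_1(1,t)=1$. The right-hand side of \eqref{eq:refined-series} is $\sum_{m}m^0(m+1)^0t^m=1/(1-t)$ with $0^0=1$, so the identity holds.

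\textbf{Inductive step.} Write $F_i(d,t)=\sum_m m^{i-1}(m+1)^{d-i}t^m$ and assume $A_i(d,t)=(1-t)^dF_i(d,t)$. We will show that the operators $\mathsf L_d$ and $\mathsf R_d$ act on $f=(1-t)^dF$ as simple operators on $F$. Using $f'=(1-t)^dF'-d(1-t)^{d-1}F$, one computes
\[
 (1-t)f'=(1-t)^{d+1}F'-d(1-t)^dF .
\]
From this,
\[
 \mathsf R_df=t\bigl(df+(1-t)f'\bigr)=(1-t)^{d+1}\,tF'.
\]
The operator $tF'$ multiplies the coefficient of $t^m$ by $m$. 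It therefore sends $m^{i-1}(m+1)^{d-i}$ to $m^{i}(m+1)^{(d+1)-(i+1)}$, which is exactly the series for $A_{i+1}(d+1,t)$.

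Similarly,
\[
 \mathsf L_df=(1+(d-1)t)f+t(1-t)f'=(1-t)^{d}\bigl[(1+(d-1)t)F-dtF\bigr]+(1-t)^{d+1}tF' ,
\]
and the bracket equals $(1-t)F$. Hence
\[
 \mathsf L_df=(1-t)^{d+1}(F+tF'),
\]
which multiplies the coefficient of $t^m$ by $m+1$, giving the series for $A_i(d+1,t)$. Since every index $1\le i\le d+1$ arises either as $i$ from $\mathsf L_d$ (for $i\le d$) or as $i+1$ from $\mathsf R_d$, the induction closes.

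\textbf{Consequence \eqref{eq:refined-worpitzky}.} Multiply the defining series of $\mathcal E_q$ by $(1-t)^{q+1}$ and by $(1-t)^{d-1-q}$; the left side of \eqref{eq:refined-worpitzky} divided by $(1-t)^d$ is $\sum_m m^qt^m$. On the other side, divide by $(1-t)^d$ and substitute \eqref{eq:refined-series}. This turns the right-hand sum into
\[
 \sum_m m^q\sum_{j}(-1)^{j-q}\binom{d-1-q}{j-q}m^{j-q}(m+1)^{d-1-j}t^m .
\]
Putting $k=j-q$ and $n=d-1-q$, the inner sum is $\sum_k\binom nk(-m)^k(m+1)^{n-k}=(m+1-m)^n=1$. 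So both sides equal $\sum_m m^qt^m$, and multiplying back by $(1-t)^d$ gives the identity.

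The only step that needs care is the operator computation, specifically verifying that the $(1-t)^{d-1}$ terms cancel. The $0^0$ convention at $m=0$ also needs checking; it is consistent because $tF'$ kills the constant term, matching $A_{i+1}(d+1,0)=0$ for $i\ge1$.
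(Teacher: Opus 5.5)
Your proof is correct and follows the paper's argument essentially step for step. Both use induction on $d$ through the insertion identities, with the conjugated operators $\mathsf L_d f/(1-t)^{d+1}=F+tF'$ and $\mathsf R_d f/(1-t)^{d+1}=tF'$ acting as coefficient multiplication by $m+1$ and $m$, followed by the binomial collapse $(m+1-m)^{d-1-q}=1$ for the second identity.
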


\begin{proof}
Equation \eqref{eq:refined-series} holds for $d=1$. Suppose its right-hand side is $F(t)$. A direct differentiation of $A_i(d,t)=(1-t)^dF(t)$ gives
\[
 \frac{\mathsf L_dA_i(d,t)}{(1-t)^{d+1}}=F+tF',
 \qquad
 \frac{\mathsf R_dA_i(d,t)}{(1-t)^{d+1}}=tF'.
\]
Write $f_m=m^{i-1}(m+1)^{d-i}$ for the coefficient of $t^m$ in
$F(t)$. Then
$[t^m](F+tF')=(m+1)f_m=m^{i-1}(m+1)^{d+1-i}$, whereas
$[t^m]tF'=mf_m=m^i(m+1)^{d-i}$.
By \eqref{eq:insertion-identities}, these are respectively the series for
$A_i(d+1,t)$ and $A_{i+1}(d+1,t)$. As $i$ ranges from $1$ to $d$, the two
identities cover every index from $1$ to $d+1$, completing the induction.

Divide the right-hand side of \eqref{eq:refined-worpitzky} by $(1-t)^d$ and use \eqref{eq:refined-series}. The coefficient of $t^m$ becomes
\begin{align*}
 &\sum_{j=q}^{d-1}(-1)^{j-q}\binom{d-1-q}{j-q}
 m^j(m+1)^{d-1-j}\\
 &\qquad=m^q\sum_{k=0}^{d-1-q}(-1)^k\binom{d-1-q}{k}
 m^k(m+1)^{d-1-q-k}=m^q.
\end{align*}
Thus the quotient is $\sum_{m\ge0}m^qt^m=\mathcal E_q(t)/(1-t)^{q+1}$, proving the result.
\end{proof}

Substituting \eqref{eq:ehrhart-C} into the Ehrhart series gives
\[
 \frac{H_d(t)}{(1-t)^d}
 =\sum_{q=0}^{d-1}2^qC_{d,q}\frac{\mathcal E_q(t)}{(1-t)^{q+1}}.
\]
After multiplying by $(1-t)^d$, Lemma~\ref{lem:refined-worpitzky} identifies the coefficient of each $A_{j+1}(d,t)$ with \eqref{eq:magic-coordinates}. Therefore
\begin{equation}\label{eq:H-eulerian-expansion}
 H_d(t)=\sum_{j=0}^{d-1}a_{d,j}A_{j+1}(d,t).
\end{equation}

The coefficients $a_{d,j}$ need not be positive. Write $\eta_n=\sum_{k=1}^n k^{-1}$ and $\eta_0=0$. From \eqref{eq:connected-burnside},
\[
 [u^n]\partial_yK(u,0)=\frac1n\sum_{h\mid n}\frac{\varphi(h)}h \eta_{n/h-1}.
\]
Since $\exp K(u,0)=(1-u)^{-1}$, differentiation of \eqref{eq:K-def} gives
\[
 C_{7,1}=\sum_{n=1}^7\frac1n\sum_{h\mid n}\frac{\varphi(h)}h \eta_{n/h-1}=\frac{89}{30}.
\]
Together with $C_{7,0}=1$, formula \eqref{eq:magic-coordinates} yields
\[
 a_{7,1}=-6+2C_{7,1}=-\frac1{15},
 \qquad 7!a_{7,1}=-336.
\]
Thus coefficientwise positivity in \eqref{eq:H-eulerian-expansion} cannot establish real-rootedness throughout the family. The failure is small and localized, which suggests changing the cone while retaining the refined Eulerian basis.

\section{The sharp repair cone}\label{sec:repair}

The refined Eulerian expansion reduces real-rootedness to membership in a
compatible cone. Its negative coordinate places $H_d(t)$ outside the natural
cone, so our objective is a controlled enlargement that can contain it. We
enlarge the cone by pairing
adjacent basis polynomials. The parameter $c$ measures how much of one basis
element is subtracted from the next, and the first task is to determine exactly
how far this repair can go while preserving compatibility in every dimension.

A finite family of real polynomials is \emph{compatible} if every nonnegative linear combination has only real zeros. Set
\[
 E_{d,c}^{(r)}(t)=A_{r+1}(d,t)-cA_r(d,t).
\]
We use an oriented form of interlacing, specified by nonnegative residues below. A common interlacer controls every nonnegative combination, including combinations with repeated zeros \cite{ChudnovskySeymour2007,SavageVisontai2015}.

\begin{lemma}[Eulerian repair]\label{lem:repair}
For $d\ge4$ and $0\le c\le2-\sqrt3$, the family
\begin{equation}\label{eq:compatible-family}
 \{A_1(d,t),\ldots,A_d(d,t)\}
 \cup\{E_{d,c}^{(r)}(t):2\le r\le d-2\}
\end{equation}
is compatible. The number $2-\sqrt3$ is the largest $c\ge0$ for which this adjacent-repair family is compatible for every $d\ge4$.
\end{lemma}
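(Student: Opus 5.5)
The plan is to use the standard common-interlacer criterion, in the form of Chudnovsky--Seymour and Savage--Visontai. A family of polynomials of a common degree with positive leading coefficients is compatible if every polynomial in it is real-rooted and all of them are interlaced by one common polynomial $g$. This holds in particular when there is a real-rooted $g$ with simple negative zeros $\gamma_1>\dots>\gamma_{d-1}$ at which every member $f$ of the family has residues $f(\gamma_k)/g'(\gamma_k)$ of one fixed sign (the oriented form mentioned before the lemma). For the refined Eulerian family I would take $g=A_1(d+1,t)$, or equivalently the Eulerian polynomial $A_d(t)$. It is classical, and follows from the insertion identities \eqref{eq:insertion-identities} by Savage--Visontai's argument, that $A_1(d,t),\dots,A_d(d,t)$ form an interlacing sequence. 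The sequence is ordered so that the zeros move monotonically as $i$ increases, and all members share a common interlacer. Consequently the sign vector
\[
\sigma_k(i)=\operatorname{sign}A_i(d,\gamma_k)
\]
is constant across $i$ for each $k$, with alternating sign in $k$. Membership of a combination in the compatible cone is therefore equivalent to having the correct sign at every node $\gamma_k$.

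With this reduction, the repaired element $E^{(r)}_{d,c}=A_{r+1}-cA_r$ lies in the same cone exactly when
\[
\frac{A_{r+1}(d,\gamma_k)}{A_r(d,\gamma_k)}\ge c
\]
at every node $\gamma_k$. Both values have the same sign, so this ratio is positive. The lemma thus reduces to a uniform lower bound of $2-\sqrt3$ on the ratio $\rho_{r,k}=A_{r+1}(d,\gamma_k)/A_r(d,\gamma_k)$ for $2\le r\le d-2$. To obtain such a bound I would use the series representation \eqref{eq:refined-series}, or better, the recursions obtained from $\mathsf L_d$ and $\mathsf R_d$. At a zero of the common interlacer one has a relation of the form $A_{i+1}(d+1,\cdot)=\mathsf R_d A_i$, and combining $\mathsf L_d$ and $\mathsf R_d$ gives a three-term relation among adjacent $A_i(d,t)$ with coefficients depending on $t$. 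This turns $\rho_{r,k}$ into a continued-fraction or Möbius-type recursion in $r$. I would show this recursion maps the interval $[2-\sqrt3,\infty)$ into itself, using the fixed points of $x\mapsto 1/(4-x)$, which are exactly $2\pm\sqrt3$. The appearance of $2-\sqrt3$ as the root of $x^2-4x+1=0$ strongly suggests that the critical recursion is $\rho_{r+1}=1/(4-\rho_r)$, or a perturbation of it controlled by $\gamma_k<0$. The excluded endpoints $r=1$ and $r=d-1$ are the places where the recursion is initialized from explicit values, and this is why the lemma keeps the plain $A_1,A_2,A_d$ there.

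For sharpness, I would exhibit a sequence of $d$ and nodes, likely $k$ near the middle with $r$ near $d/2$ as $d\to\infty$, along which $\rho_{r,k}\to2-\sqrt3$. Any $c>2-\sqrt3$ then makes some $E^{(r)}_{d,c}$ have the wrong sign at a node. I would then argue directly that this wrong sign breaks compatibility. Near such a node, a suitable nonnegative combination of $E^{(r)}_{d,c}$ with a basis polynomial changes sign an insufficient number of times, so it has a nonreal pair of zeros. Alternatively, two family members with opposite residue orientation yield a combination with a double-zero-free sign defect.

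The main obstacle is the middle step: obtaining a clean recursion for the node ratios that is uniform in $d$ and in the node, and proving the invariant-interval claim with the exact constant. If the three-term relation does not close up neatly, my first fallback is to compute $\rho_{r,k}$ asymptotically from \eqref{eq:refined-series}. Writing $A_i(d,t)/(1-t)^d$ as a polylogarithm-type sum, I would estimate the ratio uniformly in $r$ via a generating function in $r$ evaluated at $t=\gamma_k$. Small $d$ would then be checked by direct computation.
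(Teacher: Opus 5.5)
Your proposal has a genuine gap, and its first concrete step fails. The polynomial $g=A_1(d+1,t)=\sum_iA_i(d,t)$ is not an oriented common interlacer of the refined Eulerian family. At $d=4$ one has $g=(t+1)(t^2+10t+1)$, and at its zero $t=-1$ we get $A_2(4,-1)=-2$ but $A_3(4,-1)=2$. So $\operatorname{sign}A_i(d,\gamma_k)$ is not constant in $i$; the sum of an interlacing sequence sits in its middle, not at an end. Your ratio test would even exclude $E^{(2)}_{4,0}=A_3(4,t)$, which belongs to the family.

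The interlacer that works is the end of the sequence, $g_d=A_d(d,t)=tA_1(d,t)$. With it, your node criterion becomes the right one: $E^{(r)}_{d,c}$ is upper-interlaced by $g_d$ exactly when $A_{r+1}(d,\gamma)/A_r(d,\gamma)\ge c$ at the negative zeros $\gamma$ of $g_d$. However, the decisive claim is that this ratio is at least $2-\sqrt3$ uniformly in $d$, $r$ and $\gamma$. You only conjecture it via a hoped-for M\"obius recursion, so compatibility is not proved.

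The missing idea is to induct on $d$ and transport the whole family, not to recurse in $r$. The insertion identities give $E^{(r)}_{d+1,c}=\mathsf L_dE^{(r)}_{d,c}$ for $2\le r\le d-2$, and $E^{(r)}_{d+1,c}=\mathsf R_dE^{(r-1)}_{d,c}$ for $3\le r\le d-1$. Together with the corresponding identities for the $A_i$, this shows that $\mathsf L_d,\mathsf R_d$ map the level-$d$ family onto the level-$(d+1)$ family. A residue computation at the zeros of $G=\mathsf R_dg_d=g_{d+1}=tg_dQ$ shows that both operators preserve upper interlacing. It uses $h=f/g_d$ with $h'\le0$ and $h+th'\ge0$, and $Q'(\rho)<0$ at negative zeros by Cauchy--Schwarz.

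Hence $c$ enters only at $d=4$. There the single condition is that the nonzero zero $\xi_c=-(1-2c)/(2-c)$ of $E^{(2)}_{4,c}$ lies in $[-1/2,\beta]$ with $\beta=-2+\sqrt3$, which is exactly $c\le2-\sqrt3$. Indeed the extremal ratio $A_3(4,\beta)/A_2(4,\beta)=-\beta=2-\sqrt3$ occurs at $d=4$, not asymptotically with $r\approx d/2$ as you guessed.

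For sharpness, a wrong sign relative to one chosen interlacer does not prove incompatibility. You need an explicit nonnegative combination with nonreal zeros. One example is $\delta A_1(4,t)+E^{(2)}_{4,\delta}(t)=(4-\delta)t^2+2t+\delta$ with $2-\sqrt3<\delta<\min\{c,1/2\}$, whose discriminant $4(\delta^2-4\delta+1)$ is negative. It is a nonnegative combination of the level-$4$ family for $c$, via $E^{(2)}_{4,\delta}=E^{(2)}_{4,c}+(c-\delta)A_2(4,t)$.

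Finally, the endpoints are excluded for substantive reasons, not because a recursion is initialized there. $E^{(1)}_{d,c}(0)=-c<0$ violates the sign condition at the zero $0$ of $g_d$. And $A_1(4,t)+E^{(3)}_{4,2-\sqrt3}(t)$ already has nonreal zeros.
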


\begin{proof}
Let $\mathcal F_{d,c}$ denote the family in \eqref{eq:compatible-family}. We prove the stronger assertion that
\[
 g_d(t)=A_d(d,t)=tA_1(d,t)
\]
is an upper common interlacer for $\mathcal F_{d,c}$. Here $f$ is upper-interlaced by $g_d$ when
\begin{equation}\label{eq:residue-order}
 \frac{f(t)}{g_d(t)}=b+\sum_{\gamma:g_d(\gamma)=0}\frac{\lambda_\gamma}{t-\gamma},
 \qquad b\ge0,\quad \lambda_\gamma\ge0.
\end{equation}
For positive-leading polynomials this residue condition is the oriented weak-interlacing relation, and a common upper interlacer implies compatibility \cite{ChudnovskySeymour2007,SavageVisontai2015}.

We first verify $d=4$. Put $\alpha=-2-\sqrt3$ and $\beta=-2+\sqrt3$. Then
\begin{align*}
 A_1(4,t)&=t^2+4t+1, & A_2(4,t)&=2t(t+2),\\
 A_3(4,t)&=2t(2t+1), & A_4(4,t)&=t(t-\alpha)(t-\beta),
\end{align*}
and
\[
 E_{4,c}^{(2)}(t)=2t\bigl((2-c)t+(1-2c)\bigr).
\]
For $0\le c\le2-\sqrt3$, its nonzero zero
\[
 \xi_c=-\frac{1-2c}{2-c}
\]
lies in $[-1/2,\beta]$. The nonzero zeros of $A_2(4,t)$ and $A_3(4,t)$ are $-2$ and $-1/2$, respectively. Consequently every member of $\mathcal F_{4,c}$ is weakly upper-interlaced by $g_4=A_4(4,t)$, whose zeros are $\alpha,\beta,0$.

Assume now that $g=g_d$ is an upper common interlacer for $\mathcal F_{d,c}$, and set
\[
 G=\mathsf R_dg=A_{d+1}(d+1,t),\qquad q=\frac{g'}g,\qquad Q=d+(1-t)q.
\]
Thus $G=tgQ$. Deleting the final letter shows that $A_1(d,t)$ is the ordinary Eulerian polynomial of order $d-1$, while deleting a final $1$ gives $A_d(d,t)=tA_1(d,t)$. The classical Eulerian polynomial has simple negative zeros, a fact also obtained from the differential recurrence used here \cite{LiuWang2007}. Hence $g=tA_1(d,t)$ has $d-1$ simple nonpositive zeros, and $G=A_{d+1}(d+1,t)=tA_1(d+1,t)$ has one simple zero at $0$ and $d-1$ simple negative zeros. The negative zeros of $G$ are precisely the zeros of $Q$. If $\rho<0$ is such a zero, then
\[
 q(\rho)=-\frac d{1-\rho}.
\]
Since $q(t)=\sum_\gamma(t-\gamma)^{-1}$, Cauchy--Schwarz gives
\[
 \sum_\gamma\frac1{(\rho-\gamma)^2}
 \ge \frac{d^2}{(d-1)(1-\rho)^2},
\]
and hence
\[
 Q'(\rho)=-q(\rho)-(1-\rho)\sum_\gamma\frac1{(\rho-\gamma)^2}
 \le-\frac d{(d-1)(1-\rho)}<0.
\]
For $f\in\mathcal F_{d,c}$, write $h=f/g$. Direct calculation gives
\begin{align*}
 \mathsf L_df&=hG+(1-t)g(h+th'),\\
 \mathsf R_df&=hG+t(1-t)gh'.
\end{align*}
The residue expansion of $h$ implies
\[
 h'=-\sum_\gamma\frac{\lambda_\gamma}{(t-\gamma)^2}\le0,
 \qquad
 h+th'=b+\sum_\gamma\frac{-\gamma\lambda_\gamma}{(t-\gamma)^2}\ge0.
\]
At a negative zero $\rho$ of $G$, the two residues are
\[
 \operatorname*{Res}_{t=\rho}\frac{\mathsf L_df}{G}
 =\frac{(1-\rho)(h(\rho)+\rho h'(\rho))}{\rho Q'(\rho)}\ge0,
 \qquad
 \operatorname*{Res}_{t=\rho}\frac{\mathsf R_df}{G}
 =\frac{(1-\rho)h'(\rho)}{Q'(\rho)}\ge0.
\]
At the root $t=0$ of $G$, the first residue is $f(0)/g'(0)=\lambda_0\ge0$, while the second quotient is regular because $\mathsf R_df$ contains a factor $t$. If $b$ is the constant in \eqref{eq:residue-order}, direct expansion at infinity gives constant terms $0$ and $b$, respectively, for $(\mathsf L_df)/G$ and $(\mathsf R_df)/G$. All residues and constants in the two partial-fraction expansions are therefore nonnegative. Vanishing residues allow common zeros, so the same argument includes the endpoint $c=2-\sqrt3$ and weak interlacing. Hence both $\mathsf L_df$ and $\mathsf R_df$ are upper-interlaced by $G$.

The insertion identities and linearity give
\begin{align*}
 A_i(d+1,t)&=\mathsf L_dA_i(d,t) &&(1\le i\le d),\\
 A_i(d+1,t)&=\mathsf R_dA_{i-1}(d,t) &&(2\le i\le d+1),\\
 E_{d+1,c}^{(r)}&=\mathsf L_dE_{d,c}^{(r)} &&(2\le r\le d-2),\\
 E_{d+1,c}^{(r)}&=\mathsf R_dE_{d,c}^{(r-1)} &&(3\le r\le d-1).
\end{align*}
These images cover $\mathcal F_{d+1,c}$, so induction proves compatibility for $0\le c\le2-\sqrt3$.

It remains to prove sharpness for every $c>2-\sqrt3$. Choose
\[
 2-\sqrt3<\delta<\min\{c,1/2\}.
\]
The roots of $E_{4,\delta}^{(2)}$ are $\xi_\delta$ and $0$, with
\[
 \beta<\xi_\delta<0,
\]
whereas the roots of $A_1(4,t)$ are $\alpha<\beta$. A concrete obstruction is the positive combination
\[
 \delta A_1(4,t)+E_{4,\delta}^{(2)}(t)
 =(4-\delta)t^2+2t+\delta.
\]
Its discriminant is $4(\delta^2-4\delta+1)<0$, because
$2-\sqrt3<\delta<1/2$. Thus it has a nonreal conjugate pair. Since
\[
 E_{4,\delta}^{(2)}=E_{4,c}^{(2)}+(c-\delta)A_2(4,t),
\]
the same polynomial is a nonnegative linear combination of members of $\mathcal F_{4,c}$. Thus $\mathcal F_{4,c}$ is incompatible for every $c>2-\sqrt3$, which proves the asserted uniform optimality.
\end{proof}

The base-case transition is isolated in Figure~\ref{fig:repair-threshold}. The three order diagrams show the alternating regime, the binding collision at $c=\cstar$, and the first obstruction after that collision.

\begin{figure}[H]
\centering
\includegraphics[width=.94\textwidth]{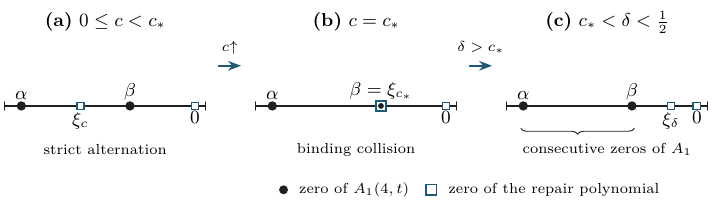}
\caption{The binding root collision for the $d=4$ repair family. Filled circles are the fixed zeros $\alpha$ and $\beta$ of $A_1(4,t)$; open squares mark $\xi_c$ (or $\xi_\delta$) and $0$, the zeros of the corresponding repair polynomial. The moving zero meets $\beta$ at $c_*=2-\sqrt3$, after which the two zeros of $A_1$ become consecutive. Horizontal spacing records order rather than metric distance.}
\label{fig:repair-threshold}
\end{figure}

At the sharp parameter $c=\cstar$, the terminal repair cannot be adjoined uniformly to the family in Lemma~\ref{lem:repair}. At $d=4$, the positive combination
\[
 A_1(4,t)+E_{4,\cstar}^{(3)}(t)
 =t^3+(4\sqrt3-3)t^2+(2\sqrt3+1)t+1
\]
has discriminant
\[
 4(623-360\sqrt3)<0.
\]
Thus the sharp-parameter cone used below omits the terminal repair, which is why the tails stop at $d-2$.

Set $\cstar=2-\sqrt3$ and define
\begin{equation}\label{eq:tails}
 S_{d,r}(\cstar)=\sum_{j=r}^{d-2}\cstar^{j-r}a_{d,j},
 \qquad 1\le r\le d-2.
\end{equation}

These geometric tails are the coordinates of the repaired cone. Their
recursion absorbs each interior coordinate into an adjacent repair polynomial,
leaving only the two endpoint coordinates outside the tail system.

The two coordinates outside these tails are automatic. Setting $y=0$ in \eqref{eq:connected-burnside} gives
\[
 [u^d]K(u,0)=\frac1{d^2}\sum_{h\mid d}\varphi(h)=\frac1d,
\]
and therefore $\exp K(u,0)=(1-u)^{-1}$. Hence $C_{d,0}=a_{d,0}=1$. At the other endpoint, $A_d(d,t)$ is the unique basis element in \eqref{eq:H-eulerian-expansion} of degree $d-1$, with leading coefficient one. Thus $a_{d,d-1}=[t^{d-1}]H_d(t)$, which is positive for $d\ge3$ by Proposition~\ref{prop:strong-bijection}.

\begin{corollary}[Real-rootedness from nonnegative repaired tails]\label{prop:cone}
For $d\ge4$, suppose that $a_{d,0}\ge0$, $a_{d,d-1}\ge0$, and every quantity in \eqref{eq:tails} is nonnegative. Then $H_d(t)$ has only real negative zeros.
\end{corollary}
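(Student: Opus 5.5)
The proof rewrites $H_d(t)$ as a nonnegative combination of the family $\mathcal F_{d,\cstar}$ from Lemma~\ref{lem:repair}, then reads off real-rootedness from compatibility and negativity from the sign pattern of the coefficients. Work in the expansion \eqref{eq:H-eulerian-expansion}, $H_d(t)=\sum_{j=0}^{d-1}a_{d,j}A_{j+1}(d,t)$, and put $c=\cstar$. Abbreviate $S_r=S_{d,r}(\cstar)$ and set $S_{d-1}=0$. The definition \eqref{eq:tails} then gives the backward recursion
\[
 S_r=a_{d,r}+\cstar S_{r+1}\qquad(1\le r\le d-2).
\]
This is the only identity needed to absorb the interior coordinates into repair polynomials.

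The key step is a summation-by-parts identity:
\[
 \sum_{j=1}^{d-2}a_{d,j}A_{j+1}(d,t)
 =S_1A_2(d,t)+\sum_{r=2}^{d-2}S_r\bigl(A_{r+1}(d,t)-\cstar A_r(d,t)\bigr)
 =S_1A_2(d,t)+\sum_{r=2}^{d-2}S_rE_{d,\cstar}^{(r)}(t).
\]
To verify it, compare coefficients of $A_{j+1}(d,t)$ for $1\le j\le d-2$. The right side contributes $S_j$ from the $A_{j+1}$ term and $-\cstar S_{j+1}$ from the repair polynomial indexed by $r=j+1$. That second contribution exists only when $j+1\le d-2$; for $j=d-2$ it vanishes anyway because $S_{d-1}=0$. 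In every case the coefficient is $S_j-\cstar S_{j+1}=a_{d,j}$. Note that $A_1(d,t)$ never appears on the right, which is consistent with the repair indices starting at $r=2$.

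Substituting this identity into \eqref{eq:H-eulerian-expansion} gives
\[
 H_d(t)=a_{d,0}A_1(d,t)+S_1A_2(d,t)+\sum_{r=2}^{d-2}S_rE^{(r)}_{d,\cstar}(t)+a_{d,d-1}A_d(d,t).
\]
Every polynomial on the right belongs to the family \eqref{eq:compatible-family} with $c=\cstar$, and every coefficient is nonnegative by hypothesis. Since $d\ge4$, Lemma~\ref{lem:repair} applies, so this nonnegative combination has only real zeros.

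Negativity of the zeros is the remaining point, and I expect it to be the only step needing care beyond bookkeeping. $H_d$ is the $h^*$-polynomial of a lattice polytope, so Stanley's nonnegativity theorem makes all its coefficients nonnegative. Its constant term is $a_{d,0}A_1(d,0)=1$, because $C_{d,0}=1$ and only the identity permutation contributes at $t=0$. Hence $H_d\ne0$ and $0$ is not a zero. A polynomial with nonnegative coefficients and positive constant term is strictly positive for $t\ge0$, so every real zero is negative.
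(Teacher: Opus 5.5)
Your proposal is correct and matches the paper's proof: it uses the same tail recursion $a_{d,r}=S_{d,r}(\cstar)-\cstar S_{d,r+1}(\cstar)$ with $S_{d,d-1}(\cstar)=0$, and the same decomposition of $H_d(t)$ into $A_1(d,t)$, $A_2(d,t)$, $E^{(r)}_{d,\cstar}(t)$ and $A_d(d,t)$ with nonnegative coefficients, after which Lemma~\ref{lem:repair} gives real zeros and nonnegativity of the $h^*$-coefficients together with $H_d(0)=1$ gives negativity. Your explicit coefficient check of the summation by parts, and your computation $H_d(0)=a_{d,0}A_1(d,0)=1$, only fill in bookkeeping that the paper leaves implicit.
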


\begin{proof}
With the convention $S_{d,d-1}(\cstar)=0$, the tail recursion is
\[
 a_{d,r}=S_{d,r}(\cstar)-\cstar S_{d,r+1}(\cstar).
\]
Substituting this identity for $1\le r\le d-2$ into
\eqref{eq:H-eulerian-expansion} and collecting adjacent basis elements
gives the exact decomposition
\begin{align}\label{eq:cone-decomposition}
 H_d(t)={}&a_{d,0}A_1(d,t)+S_{d,1}(\cstar)A_2(d,t)\\
 &+\sum_{r=2}^{d-2}S_{d,r}(\cstar)E_{d,\cstar}^{(r)}(t)
 +a_{d,d-1}A_d(d,t).\notag
\end{align}
Every scalar coefficient on the right is nonnegative, so
Lemma~\ref{lem:repair} gives real-rootedness.  The coefficients of the
Ehrhart $h^*$-polynomial of a lattice polytope are nonnegative
\cite{Stanley2012,BeckRobins2015}.  Since $H_d(0)=1$, the polynomial has
no zero on the nonnegative real axis.  All of its real zeros are therefore
negative.
\end{proof}

Thus real-rootedness has been reduced to the arithmetic inequalities
$S_{d,r}(\cstar)\ge0$. The remaining sections verify the same inequalities in
two complementary ways: exact reconstruction for the first $1000$ dimensions
and uniform analytic estimates for all sufficiently large dimensions.

\section{An exact finite certificate}\label{sec:finite}

Proposition~\ref{prop:cone} reduces the finite-dimensional part of the
real-rootedness argument to a concrete verification problem: determine the
refined coordinates and check the sign of every repaired tail. Our objective
here is to make that step independently reproducible with exact integer
arithmetic. The CSV stores the coordinates, and the displayed recurrence and
pseudocode independently reconstruct and verify every entry through dimension
$1000$.

The finite certificate is an exact table of the scaled refined coordinates
\[
 \alpha_{d,j}=d!a_{d,j}
 \qquad(1\le d\le1000,\ 0\le j\le d-1).
\]
It contains $\sum_{d=1}^{1000}d=500500$ integer entries. The file
\begin{center}
\nolinkurl{refined_coordinates_d1_d1000.csv}
\end{center}
is a CSV with the literal header \texttt{d,j,alpha\_scaled}.
Its third field stores $\alpha_{d,j}=d!a_{d,j}$, and the records are ordered
lexicographically by $(d,j)$. The first three coordinate vectors encode the identities
\[
 H_1(t)=1,
 \qquad H_2(t)=1,
 \qquad H_3(t)=(1+t)^2.
\]
The corresponding vectors are $(1)$, $(2,0)$, and $(6,0,6)$. For an independent reconstruction of every table entry, put
\[
 B_d(y)=d![u^d]K(u,y),
 \qquad
 \Delta_d(y)=d![u^d]\exp K(u,y),
 \qquad \Delta_0(y)=1.
\]
The connected-block interpretation leading to
\eqref{eq:connected-burnside} shows that $B_d(y)$ has integer
coefficients: after multiplication by $d!$, the cycle-index sum ranges over labeled permutations, and each connected weight
$\gcd(\ell_1,\ldots,\ell_r)d^{r-2}$ is an integer. For $r=1$ this weight is $1$.  Differentiating the exponential gives the exact recurrence
\[
 \Delta_d(y)=\sum_{k=1}^d\binom{d-1}{k-1}B_k(y)\Delta_{d-k}(y).
\]
Starting from $\Delta_0(y)=1$, the recurrence proves inductively that every
$\Delta_d(y)$ also has integer coefficients.
Writing $\Delta_{d,q}=[y^q]\Delta_d(y)$, formula \eqref{eq:magic-coordinates} becomes
\begin{equation}\label{eq:scaled-magic-coordinates}
 \alpha_{d,j}=\sum_{q=0}^j(-1)^{j-q}
 \binom{d-1-q}{j-q}2^q\Delta_{d,q}.
\end{equation}
Thus \eqref{eq:connected-burnside}, the displayed recurrence, and \eqref{eq:scaled-magic-coordinates} reconstruct the CSV using integer and rational arithmetic alone.

Every tail is represented exactly in $\Z[\sqrt3]$. Multiplication by $\cstar=2-\sqrt3$ sends
\[
 x+y\sqrt3\longmapsto (2x-3y)+(-x+2y)\sqrt3.
\]
The sign test is completely integral. If $x$ and $y$ have the same weak sign, the sign is immediate. If $x>0>y$, then $x+y\sqrt3>0$ exactly when $x^2>3y^2$. If $y>0>x$, then $x+y\sqrt3>0$ exactly when $x^2<3y^2$. Equality cannot occur unless $x=y=0$, because $\sqrt3$ is irrational. Algorithm~\ref{alg:finite-certificate} uses this rule at every tail update.

\begin{algorithm}[H]
\caption{Verification of the exact finite certificate}\label{alg:finite-certificate}
\begin{algorithmic}[1]
\Require CSV entries $\alpha_{d,j}\in\Z$ for $1\le d\le1000$ and $0\le j<d$
\State Reject missing, repeated, out-of-range, or nonintegral entries
\State $\Delta_0(y)\gets1$
\For{$d\gets1$ \textbf{to} $1000$}
  \State Compute $B_d(y)$ from \eqref{eq:connected-burnside}
  \State $\Delta_d(y)\gets\sum_{k=1}^d\binom{d-1}{k-1}B_k(y)\Delta_{d-k}(y)$
  \For{$j\gets0$ \textbf{to} $d-1$}
    \State Compute $\widehat\alpha_{d,j}$ from \eqref{eq:scaled-magic-coordinates}
    \State Reject unless $\widehat\alpha_{d,j}=\alpha_{d,j}$
  \EndFor
  \If{$d\le3$}
    \State Reject unless the row is $(1)$, $(2,0)$, or $(6,0,6)$, respectively
  \Else
    \State Reject unless $\alpha_{d,0}\ge0$ and $\alpha_{d,d-1}\ge0$
    \State $Z\gets\alpha_{d,d-2}$; reject unless $Z>0$ in $\Q(\sqrt3)$
    \For{$r\gets d-3$ \textbf{downto} $1$}
      \State $Z\gets\alpha_{d,r}+(2-\sqrt3)Z$
      \State Reject unless $Z>0$ by the exact integer sign test above
    \EndFor
  \EndIf
\EndFor
\State \Return \textsc{Pass}
\end{algorithmic}
\end{algorithm}

For each $4\le d\le1000$, this procedure certifies
\[
 a_{d,0}\ge0,
 \qquad a_{d,d-1}\ge0,
 \qquad S_{d,r}(2-\sqrt3)>0\quad(1\le r\le d-2).
\]
It checks exactly
\[
\sum_{d=4}^{1000}(d-2)=498500
\]
strict tail inequalities. The exact global minimum is
\[
 (d,r)=(4,1),
 \qquad S_{4,1}(2-\sqrt3)=\frac{156-76\sqrt3}{24}>0.
\]
Proposition~\ref{prop:cone} and the three displayed base cases prove Theorem~\ref{thm:intro-finite}. The accompanying data supplement consists of this single CSV. Algorithm~\ref{alg:finite-certificate} supplies a language-independent reconstruction and verification procedure.


\section{Reverse coordinates and an analytic lower block}\label{sec:reverse}

Beyond the finite certificate, a uniform argument is needed for all
sufficiently large dimensions. This section proves positivity of the repaired
tails in the lower-index regime and builds the reversed generating-function
identity needed for the upper regime. Small indices are handled in the
original, or forward, coordinates by a Cauchy estimate. Large indices become
more transparent after the coefficient order is reversed.

Write $\mathcal F_d(x)=[u^d]e^{K(u,x)}=\sum_jC_{d,j}x^j$ and put
\[
 \mathscr R_d(z)=\sum_{m=0}^{d-1}a_{d,d-1-m}z^m.
\]
Define
\begin{equation}\label{eq:P-kernel}
 P_n(z)=\frac1{n^2(n-1)!}\prod_{q=1}^{n-1}(2n-q+qz)
\end{equation}
and
\begin{equation}\label{eq:J-kernel}
 J_n(z)=\sum_{h\mid n}\frac{\varphi(h)}{h^2}
 (z-1)^{n-n/h}P_{n/h}(z).
\end{equation}
Writing $J(u,z)=\sum_{n\ge1}J_n(z)u^n$ and
\[
 \Phi_z(w)=\frac{e^{(z-1)w}-1}{z-1},
 \qquad \Phi_1(w)=w,
\]
the transformation \eqref{eq:magic-coordinates} first gives
\[
 \mathcal A_d(w):=\sum_{j=0}^{d-1}a_{d,j}w^j
 =(1-w)^{d-1}\mathcal F_d\!\left(\frac{2w}{1-w}\right).
\]
Consequently
\[
 \mathscr R_d(z)=z^{d-1}\mathcal A_d(z^{-1})
 =(z-1)^{d-1}\mathcal F_d\!\left(\frac2{z-1}\right).
\]
To sum this identity over $d$, substitute $v=u(z-1)$ and
$y=2/(z-1)$ in $\exp K(v,y)$.  If $n=hm$, the $h$-summand in
\eqref{eq:connected-burnside} satisfies
\[
 (z-1)^n\binom{(z+1)m/(z-1)-1}{m-1}
 =(z-1)^{n-m+1}m^2P_m(z).
\]
Writing $m=n/h$, the coefficient of $u^n$ after substitution is therefore
\[
 \frac1{n^2}\sum_{h\mid n}\varphi(h)
 (z-1)^{n-m+1}m^2P_m(z)
 =\sum_{h\mid n}\frac{\varphi(h)}{h^2}
 (z-1)^{n-n/h+1}P_{n/h}(z)
 =(z-1)J_n(z).
\]
Hence $K(u(z-1),2/(z-1))=(z-1)J(u,z)$.  Removing the constant term of the
exponential and dividing by $z-1$ now gives the formal identity
\begin{equation}\label{eq:reverse-identity}
 \sum_{d\ge1}\mathscr R_d(z)u^d=\Phi_z(J(u,z)).
\end{equation}
Set
\[
 \mathcal H(u,z)=e^{(z-1)J(u,z)}
 =\sum_{s\ge0}\mathcal H_s(z)u^s.
\]
Differentiating \eqref{eq:reverse-identity} in $u$ gives
\[
 \sum_{d\ge1}d\mathscr R_d(z)u^{d-1}
 =J_u(u,z)\mathcal H(u,z).
\]
In the coefficient of $u^{d-1}$, the term $J_{d-s}$ is multiplied by
$d-s$.  Division by $d$ proves the exact marked identity
\begin{equation}\label{eq:marked-identity}
 \mathscr R_d(z)=\sum_{s=0}^{d-1}\left(1-\frac sd\right)
 J_{d-s}(z)\mathcal H_s(z).
\end{equation}

The marked identity makes the two regimes explicit. The proof now separates
the tail index $r$ into two overlapping descriptions.
For $r\le d/4$, a Cauchy estimate in the forward coordinates controls the
whole geometric tail.  For larger $r$, the reverse identity above reduces
the problem to positivity of coefficients near the opposite endpoint.  We
first treat the forward block.  Write
\[
 \mathcal F_d(x)=\sum_{j=0}^{d-1}C_{d,j}x^j=[u^d]e^{K(u,x)},
 \qquad
 \mathscr E(x)=\frac{(1+x)^{1+x}}{x^x}.
\]

\begin{lemma}[A coefficient bound for exponential series]\label{lem:exponential-envelope}
Let $b_n\ge0$, $b_n\le Cn^{-5/2}$, and $A=\sum_{n\ge1}b_n<\infty$.
Then, for $n\ge1$,
\[
 [u^n]\exp\left(\sum_{j\ge1}b_ju^j\right)
 \le Ce^A(A^3+6A^2+7A+1)n^{-5/2}.
\]
\end{lemma}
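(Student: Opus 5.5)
Expanding the exponential as $\sum_{k\ge0}B(u)^k/k!$ with $B(u)=\sum_j b_ju^j$, the coefficient is
\[
 [u^n]e^{B}=\sum_{k\ge1}\frac1{k!}\sum_{j_1+\cdots+j_k=n}b_{j_1}\cdots b_{j_k}\qquad(n\ge1).
\]
The plan is to bound each $k$-fold convolution by a constant times $n^{-5/2}$ and then sum the constants against $1/k!$.

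\textbf{The inductive bound.} We aim for
\[
 [u^n]B^k\le C\,\gamma_k\,A^{k-1}n^{-5/2}
\]
with $\gamma_k$ of polynomial growth in $k$. The natural tool is the standard subexponential convolution estimate. In any composition $j_1+\cdots+j_k=n$, some part satisfies $j_i\ge n/k$.

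By symmetry, the sum is therefore at most $k$ times the sum over compositions with $j_1\ge n/k$. For those, $b_{j_1}\le C(n/k)^{-5/2}=Ck^{5/2}n^{-5/2}$, and the remaining parts contribute at most $A^{k-1}$. This gives
\[
 [u^n]B^k\le C\,k^{7/2}A^{k-1}n^{-5/2}.
\]
Summing over $k$ yields $Ce^A\,\mathrm{poly}(A)\,n^{-5/2}$, but with fractional powers of $k$. That polynomial would not match the target $A^3+6A^2+7A+1$.

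\textbf{Sharpening to the target polynomial.} To obtain the cubic, I will use a cleaner device: the derivative identity $nF_n=\sum_j jb_jF_{n-j}$, where $F=e^B$. This rewrites the recursion as
\[
 F_n=\frac1n\sum_{j=1}^{n} jb_jF_{n-j}.
\]
Split the sum at $j\ge n/2$ and $j<n/2$.
\begin{itemize}
\item For $j\ge n/2$: use $jb_j\le Cj^{-3/2}$, together with $\sum_mF_m\le e^A$.
\item For $j<n/2$: we have $n-j>n/2$, so an induction hypothesis $F_m\le Dm^{-5/2}$ applies. Then use $\sum_j jb_j$, which is finite since $jb_j\le Cj^{-3/2}$.
\end{itemize}
This route introduces constants such as $2^{5/2}$, so the exact polynomial probably comes instead from comparing the coefficients of $A^k$.

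The cleanest match I see is this. Observe that
\[
 \frac{k^3}{k!}\ \text{summed against}\ A^{k-1}\ \text{gives}\ e^A(A^2+3A+1),
\]
and that $\sum_k k^4A^{k-1}/k!=e^A(A^3+6A^2+7A+1)$. The latter is exactly the stated polynomial, since the Touchard polynomial is $T_4(A)=A^4+6A^3+7A^2+A$ and $T_4(A)/A$ is the target. So the goal is the bound $[u^n]B^k\le Ck^4A^{k-1}n^{-5/2}$, after which $\sum_k k^4A^{k-1}/k!=e^AT_4(A)/A$ gives the lemma exactly.

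\textbf{Obtaining the $k^4$ factor.} The bound needs $k^4$ where the symmetry argument above gave $k^{7/2}$. Since $k^{7/2}\le k^4$ for $k\ge1$, the crude bound already suffices, and there is slack. The final write-up will therefore be: expand the exponential, apply the pigeonhole/symmetry bound to each power, use $k^{7/2}\le k^4$, and sum the Touchard series.

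The main care point is the pigeonhole step. The choice of the large part must be made without double counting in the wrong direction. Bounding the count over-generously by the factor $k$ is legitimate, since it only overcounts compositions with several large parts. One must also check that the decreasing envelope $j^{-5/2}\le(n/k)^{-5/2}$ is applied correctly to that large part.
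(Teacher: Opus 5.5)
Your proposal is correct and is essentially the paper's proof. You choose a part of size at least $n/k$ (factor $k$), which gives $Ck^{7/2}A^{k-1}n^{-5/2}$ for the $k$-fold convolution, then weaken $k^{7/2}\le k^4$ and sum using $\sum_{k\ge1}k^4A^{k-1}/k!=e^A(A^3+6A^2+7A+1)$; the paper obtains this identity from $(A\,d/dA)^4e^A$, and the derivative-recursion detour you considered is not needed.
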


\begin{proof}
In a composition of $n$ into $k$ positive parts, one part is at least $n/k$.
Choose such a part and sum the remaining $k-1$ parts freely. The resulting
upper bound for the $k$-fold convolution is
$Ck^{7/2}A^{k-1}n^{-5/2}$.
Since $k^{7/2}\le k^4$,
\[
 \sum_{k\ge1}\frac{k^{7/2}A^{k-1}}{k!}
 \le e^A(A^3+6A^2+7A+1).
\]
The last identity follows by applying $(A\,d/dA)^4$ to $e^A$ and dividing
by $A$, with the continuous interpretation at $A=0$.
\end{proof}

\begin{lemma}[Bounds for the connected series]\label{lem:connected-envelope}
Let $x_0=5/54$ and $x_0\le x\le2$.  For every $d\ge2$,
\begin{equation}\label{eq:connected-upper}
 \mathcal F_d(x)\le e^{140}d^{-5/2}\mathscr E(x)^d.
\end{equation}
For $y=\sqrt3-1$,
\begin{equation}\label{eq:connected-lower}
 \mathcal F_d(y)\ge \frac1{36}d^{-5/2}\mathscr E(y)^d.
\end{equation}
\end{lemma}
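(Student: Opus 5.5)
The plan is to rescale $u$ by the exponential growth rate and then apply Lemma~\ref{lem:exponential-envelope}. Fix $x\in[x_0,2]$ and put
\[
 b_n(x)=\mathscr E(x)^{-n}[u^n]K(u,x).
\]
Then $\mathcal F_d(x)\mathscr E(x)^{-d}=[u^d]\exp\bigl(\sum_n b_n(x)u^n\bigr)$. All $b_n(x)\ge0$, since each summand of \eqref{eq:connected-burnside} is a nonnegative binomial for $x>0$. The upper bound \eqref{eq:connected-upper} then reduces to two explicit estimates, uniform on $[x_0,2]$:
\[
 b_n(x)\le C\,n^{-5/2},
 \qquad
 A=\sum_{n\ge1}b_n(x)<\infty .
\]
With these, Lemma~\ref{lem:exponential-envelope} gives the constant $Ce^A(A^3+6A^2+7A+1)$. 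The remaining task is to check that this constant is at most $e^{140}$, which leaves a lot of room.

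For the bound on $b_n$, I would first treat the $h=1$ summand
\[
 \frac1{n^2}\binom{(1+x)n-1}{n-1}
 =\frac1{(1+x)n^2}\binom{(1+x)n}{n}.
\]
I would write the binomial as $\Gamma((1+x)n+1)/(\Gamma(n+1)\Gamma(xn+1))$ and apply Robbins-type two-sided Stirling bounds for $\Gamma$, valid for real arguments $\ge1$; the case $xn<1$ is handled by crude monotonicity bounds. This gives
\[
 \binom{(1+x)n}{n}\le c_1\sqrt{\frac{1+x}{2\pi x n}}\;\mathscr E(x)^n .
\]
Hence the $h=1$ contribution to $b_n$ is at most $c_2\,x^{-1/2}n^{-5/2}$. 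The restriction $x\ge x_0=5/54$ is what keeps the factor $x^{-1/2}$, and the correction terms for small $xn$, bounded.

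For divisors $h\ge2$, I would bound each summand by $\varphi(h)\mathscr E(x)^{n/h}\le n\,\mathscr E(x)^{n/2}$. The total over divisors is then at most $n^2\mathscr E(x)^{-n/2}$ after rescaling, and since $\mathscr E(x)\ge\mathscr E(x_0)>1$ this is $O(n^{-5/2})$ with an explicit constant. The same termwise bounds make $A$ explicit: a convergent $\zeta(5/2)$-type sum plus a geometric series in $\mathscr E(x_0)^{-1/2}$.

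For the lower bound \eqref{eq:connected-lower}, every coefficient of $K(u,y)$ is nonnegative for $y>0$. Therefore
\[
 \mathcal F_d(y)=[u^d]\exp K(u,y)\ge [u^d]K(u,y),
\]
and $[u^d]K(u,y)$ is in turn at least its $h=1$ summand. The lower Stirling bound at $y=\sqrt3-1$ gives this summand $\ge c\,d^{-5/2}\mathscr E(y)^d$. I would verify $c\ge1/36$ explicitly, checking the smallest values of $d$ directly if the asymptotic constant is not yet attained there.

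I expect the main obstacle to be the uniform explicit Stirling control of $\binom{(1+x)n-1}{n-1}$ over both $x\in[x_0,2]$ and all $n\ge1$, including small $n$ where $xn<1$. The rest is bookkeeping of constants, and the generous $e^{140}$ suggests crude bounds suffice.
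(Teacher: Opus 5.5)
Your architecture matches the paper's. You rescale by $\mathscr E(x)^n$ and control the primitive summand $L_n(x)=n^{-2}\binom{(1+x)n-1}{n-1}$ by Stirling-type bounds. You then use $\mathscr E(x_0)>1$ for the divisors $h\ge2$, feed $C$ and $A$ into Lemma~\ref{lem:exponential-envelope}, and obtain \eqref{eq:connected-lower} from positivity and the primitive summand. One difference is in the Stirling step. The paper writes $L_n(x)=\frac{1}{n^2(n-1)!}\prod_{q=1}^{n-1}(xn+q)$ and compares $\sum_q\log(x+q/n)$ with $n\int_0^1\log(x+t)\,dt$. It therefore needs Robbins' bounds only for integer factorials, and no separate case $xn<1$ arises. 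Your real-argument Gamma route also works, for instance with Binet's bound $0<\mu(N)<1/(12N)$, which holds for all real $N>0$.

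The gap is quantitative, and it lies exactly where you expected room. The constant from Lemma~\ref{lem:exponential-envelope} is $Ce^{A}(A^3+6A^2+7A+1)$. Crude bounds on $C$ are harmless, but $A$ is exponentiated, so the target $e^{140}$ forces $A$ below roughly $120$.

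Your bound for $h\ge2$ discards the factor $n^{-2}$ and leaves $n^2\mathscr E(x)^{-n/2}$ for each $n$. At $x=x_0$ one has $\log\mathscr E(x_0)\approx0.317$, so $r=\mathscr E(x_0)^{-1/2}\approx0.853$ and $\sum_n n^2r^n=r(1+r)/(1-r)^3\approx500$. Your constant is then at least about $e^{500}$, and the argument as written does not prove \eqref{eq:connected-upper}.

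The repair is to keep the normalization. The $h$-summand of $[u^n]K(u,x)$ equals $\frac{\varphi(h)}{h^2}L_{n/h}(x)$. Writing $n=ha$ and $q=\mathscr E(x)^{-1}$, the paper sums
\[
 A=\sum_{a\ge1}L_a(x)\mathscr E(x)^{-a}\sum_{h\ge1}\frac{\varphi(h)}{h^2}q^{(h-1)a}<22\cdot\frac32\cdot3=99,
\]
using $L_a(x)\le22a^{-5/2}\mathscr E(x)^a$ and $q<49/64$. This needs an explicit lower bound such as $\mathscr E(x_0)>64/49$, which the paper certifies with a rational logarithm series. The bare inequality $\mathscr E(x_0)>1$ does not make your constants explicit.

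The pointwise bound $b_n\le5000\,n^{-5/2}$ then follows as you planned, from $\varphi(h)/h^2\le1/h$, $\mathscr E(x)^{-n/2}<(7/8)^n$, and $\sup_{t>0}t^{5/2}(7/8)^t<200$. Compute $A$ directly in this rearranged form, not through $C\sum_n n^{-5/2}$, which would again be far too large.
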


\begin{proof}
Put
\[
 L_n(x)=\frac1{n^2}\binom{(1+x)n-1}{n-1}
 =\frac1{n^2(n-1)!}\prod_{q=1}^{n-1}(xn+q).
\]
The increasing-function Riemann bounds for $f_x(t)=\log(x+t)$ are
\begin{equation}\label{eq:riemann-exact}
 d\!\int_0^1f_x(t)\,dt-f_x(1)
 \le\sum_{q=1}^{d-1}f_x(q/d)
 \le d\!\int_0^1f_x(t)\,dt-f_x(0).
\end{equation}
Robbins' inequalities \cite{Robbins1955}
\[
 \sqrt{2\pi N}(N/e)^Ne^{1/(12N+1)}<N!
 <\sqrt{2\pi N}(N/e)^Ne^{1/(12N)}
\]
combine with $n^{n-1}/(n-1)!=n^n/n!$ and
$\int_0^1f_x=\log\mathscr E(x)-1$ to give
\[
 \frac{e^{-1/(12n)}}{(1+x)\sqrt{2\pi}}
 \le\frac{L_n(x)}{n^{-5/2}\mathscr E(x)^n}
 \le\frac1{x\sqrt{2\pi}}.
\]
In particular, $x\ge x_0$, $1+y<2$, $\sqrt{2\pi}<3$, and
$e^{-1/12}>1/2$ give the convenient uniform bounds
\begin{equation}\label{eq:L-two-sided}
 L_n(x)\le22n^{-5/2}\mathscr E(x)^n,
 \qquad
 L_n(y)\ge\frac1{36}n^{-5/2}\mathscr E(y)^n.
\end{equation}
These estimates also hold at $n=1$, where $L_1(x)=1$.

The logarithmic series
\begin{equation}\label{eq:rational-log-certificate}
 \log a=2\sum_{j=0}^{N}\frac{v^{2j+1}}{2j+1}+R_N,
 \quad v=\frac{a-1}{a+1},
 \quad |R_N|\le
 \frac{2|v|^{2N+3}}{(2N+3)(1-v^2)}
\end{equation}
is an exact rational certificate whenever $a$ is rational.  With $N=80$
at $x_0$ it gives $\log\mathscr E(x_0)>3/10$.  Since
$1+3/10+(3/10)^2/2>64/49$, we have
$\mathscr E(x)\ge\mathscr E(x_0)>64/49$.
Consequently, if $n=hm$ and $h\ge2$, then
\[
 \frac{\varphi(h)}{h^2}L_m(x)
 \le22\mathscr E(x)^n n^{-5/2}h^{3/2}(7/8)^n.
\]
The continuous maximum of $t^{5/2}(7/8)^t$ is
$(5/(2e\log(8/7)))^{5/2}<200$: here
$e>27/10$, $\log(8/7)>1/8$, and the last comparison is the integer
inequality $200^3<27^5$.  Since
$\sum_{h\mid n}h^{3/2}\le n^{5/2}$, the coefficient
$k_n(x)=[u^n]K(u,x)$ satisfies
\[
 0\le k_n(x)\le5000\mathscr E(x)^n n^{-5/2}.
\]

The normalized logarithmic coefficients also have a bounded total mass.
Indeed, writing $q=\mathscr E(x)^{-1}<49/64$ and $n=ha$,
\[
 \sum_{n\ge1}k_n(x)\mathscr E(x)^{-n}
 =\sum_{a\ge1}L_a(x)\mathscr E(x)^{-a}
 \sum_{h\ge1}\frac{\varphi(h)}{h^2}q^{(h-1)a}
 <22\cdot\frac32\cdot3=99.
\]
Here the inner sum is at most
$1+q/(2(1-q))<3$, and $\sum_{a\ge1}a^{-5/2}<3/2$.
Lemma~\ref{lem:exponential-envelope}, with $C=5000$ and $A<99$, now gives
\[
 \mathcal F_d(x)\mathscr E(x)^{-d}
 <5000e^{99}(100^3+6\cdot100^2+7\cdot100+1)d^{-5/2}
 <e^{140}d^{-5/2}.
\]
For the last comparison, the factor outside $e^{99}$ is below $2^{34}$,
and $2<e$.
This proves \eqref{eq:connected-upper}.  Positivity of the connected
coefficients and the primitive summand $L_d(y)$ prove
\eqref{eq:connected-lower}.
\end{proof}

\begin{lemma}\label{lem:low-endpoint-bounds}
Let $c=2-\sqrt3$, $y=\sqrt3-1$, $q=5/113$, and $x_q=5/54$.  Then
\begin{align}
 \left(\frac cq\right)^{1/4}\frac{1+q}{y}
 \frac{\mathscr E(x_q)}{\mathscr E(y)}&<\frac{19}{20},
 \label{eq:BR}\\
 \frac cy\frac{\mathscr E(2)}{\mathscr E(y)}&<\frac{77}{100}.
 \label{eq:BU}
\end{align}
\end{lemma}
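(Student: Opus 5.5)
Both inequalities are explicit numerical comparisons between algebraic numbers and values of the elementary function $\mathscr E$. I would prove them by taking logarithms, replacing $\sqrt3$ by a rational enclosure, and bounding each logarithm with the rational certificate \eqref{eq:rational-log-certificate} from the proof of Lemma~\ref{lem:connected-envelope}. The numerical targets I expect are
\[
\log\mathscr E(y)\approx1.1798,\qquad \log\mathscr E(x_q)\approx0.3171,\qquad \mathscr E(2)=\tfrac{27}{4}.
\]
With these values the left side of \eqref{eq:BR} is about $0.944$, and the left side of \eqref{eq:BU} is about $0.759$.

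\textbf{Monotonicity and the rational enclosure.} First I would note that
\[
\frac{d}{dx}\log\mathscr E(x)=\log\frac{1+x}{x}>0,
\]
so $\mathscr E$ is increasing. Since $c=2-\sqrt3$ and $y=\sqrt3-1$, every occurrence of $\sqrt3$ on the two left sides moves the value in the same direction:
\begin{itemize}
\item increasing $\sqrt3$ decreases $c$, and hence decreases $c^{1/4}$;
\item increasing $\sqrt3$ increases $y$, and hence decreases $1/y$;
\item increasing $\sqrt3$ increases $\mathscr E(y)$, and hence decreases $1/\mathscr E(y)$.
\end{itemize}
So both left sides are decreasing functions of $\sqrt3$. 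It therefore suffices to substitute a rational lower bound $s\le\sqrt3$, such as $s=17320/10000$, certified by $s^2<3$. After this substitution I would work with $c_s=2-s$ and $y_s=s-1$; these give upper bounds for both left sides.

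\textbf{Reduction to rational logarithms.} Taking logarithms, \eqref{eq:BR} becomes
\[
\tfrac14\bigl(\log c_s-\log q\bigr)+\log(1+q)-\log y_s+\log\mathscr E(x_q)-\log\mathscr E(y_s)<\log\tfrac{19}{20}.
\]
Similarly, \eqref{eq:BU} becomes
\[
\log c_s-\log y_s+\log\tfrac{27}{4}-\log\mathscr E(y_s)<\log\tfrac{77}{100}.
\]
Here $\log\mathscr E(x)=(1+x)\log(1+x)-x\log x$, so every term is a rational multiple of $\log a$ for a rational $a$. Each such logarithm gets an upper or lower bound from \eqref{eq:rational-log-certificate}, according to the sign of its coefficient. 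With enough terms, say $N$ of a few dozen as in the earlier use with $N=80$, the certificate error is far below $10^{-6}$. The check then reduces to a finite comparison of rational numbers.

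\textbf{Main obstacle.} The delicate part is the margin in \eqref{eq:BR}, which is only about $0.6\%$ (roughly $0.006$ on the logarithmic scale). Inequality \eqref{eq:BU} has about $1.4\%$ room. So the enclosure of $\sqrt3$ and the truncation errors of all the logarithms together must stay well below $10^{-3}$. I would fix a four- or five-digit lower bound for $\sqrt3$, sum the worst-case certificate remainders explicitly, and confirm that their total is dominated by the slack.
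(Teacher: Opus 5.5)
Your proposal is correct and follows essentially the paper's route: a rational enclosure of $\sqrt3$ (the paper uses $183/250<y<733/1000$), the certificate \eqref{eq:rational-log-certificate} for the values of $\log\mathscr E$, and monotonicity of $\mathscr E$. Your numerical targets and margins also match the paper's. The only differences are cosmetic: the paper bounds the prefactors $(c/q)^{1/4}<157/100$, $(1+q)/y<143/100$ and $c/y=y/2<733/2000$ by direct rational comparisons, and exponentiates the logarithmic gap with a truncated exponential sum, whereas you stay on the logarithmic scale with only a lower bound for $\sqrt3$.
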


\begin{proof}
The integer comparisons $1732^2<3\cdot10^6<1733^2$ give
$183/250<y<733/1000$.  Formula \eqref{eq:rational-log-certificate},
with $N=80$ at $5/54$ and $N=20$ at $183/250$, gives
\[
 \frac3{10}<\log\mathscr E(5/54)<\frac{3172}{10000},
 \qquad
 \log\mathscr E(183/250)>\frac{11797}{10000}.
\]
Thus
\[
 \frac{\mathscr E(x_q)}{\mathscr E(y)}<\frac{423}{1000},
\]
because $3172/10000-11797/10000=-69/80$ and
\[
 \sum_{j=0}^4\frac{(69/80)^j}{j!}>\frac{1000}{423}.
\]
Also
\[
 (c/q)^{1/4}<\frac{157}{100},\qquad
 \frac{1+q}{y}<\frac{143}{100};
\]
the two residual rational differences are respectively
$1893201/10^8$ after taking fourth powers and $7097/2067900$.
Now
\[
 \frac{157}{100}\frac{143}{100}\frac{423}{1000}
 =\frac{9496773}{10^7}<\frac{19}{20}.
\]
Finally the same logarithmic lower bound and the degree-six exponential
sum give $\mathscr E(y)>13/4$.  Since $c/y=y/2<733/2000$ and
$\mathscr E(2)=27/4$, the left side of \eqref{eq:BU} is below
$98955/130000<77/100$.
\end{proof}

\begin{proposition}[Positivity of the lower-index tails]\label{prop:low-r-block}
For every $d\ge2^{12}$,
\[
 S_{d,r}(\cstar)>0\qquad(1\le r\le\lfloor d/4\rfloor).
\]
\end{proposition}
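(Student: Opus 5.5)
The plan is to write each tail as the full generating function at $w=\cstar$ minus a short head and one top term. With $\mathcal A_d(w)=\sum_{j=0}^{d-1}a_{d,j}w^j$, the definition \eqref{eq:tails} gives
\[
 \cstar^{r}S_{d,r}(\cstar)=\mathcal A_d(\cstar)-\sum_{j=0}^{r-1}a_{d,j}\cstar^{j}-a_{d,d-1}\cstar^{d-1}.
\]
The key observation is that $2\cstar/(1-\cstar)=\sqrt3-1=y$ and $1-\cstar=y$. The identity $\mathcal A_d(w)=(1-w)^{d-1}\mathcal F_d(2w/(1-w))$ therefore gives
\[
 \mathcal A_d(\cstar)=y^{d-1}\mathcal F_d(y)\ge\tfrac1{36}\,y^{d-1}d^{-5/2}\mathscr E(y)^d,
\]
by \eqref{eq:connected-lower}. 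It then suffices to show that the head and top terms are each less than half of this main term in absolute value.

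\emph{Head.} I would use a Cauchy estimate on the circle $|w|=q=5/113$, chosen so that $2q/(1-q)=5/54=x_q$. Since $\mathcal F_d$ has nonnegative coefficients $C_{d,j}$, for $|w|=q$ we have $|\mathcal A_d(w)|\le(1+q)^{d-1}\mathcal F_d(x_q)$. Hence
\[
 |a_{d,j}|\le q^{-j}(1+q)^{d-1}e^{140}d^{-5/2}\mathscr E(x_q)^d
\]
by \eqref{eq:connected-upper}. Because $\cstar/q>1$ and $j<r\le d/4$, each term satisfies $|a_{d,j}|\cstar^j\le(\cstar/q)^{d/4}(1+q)^{d-1}e^{140}d^{-5/2}\mathscr E(x_q)^d$. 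There are at most $d$ such terms. Dividing by the main term, the ratio is at most
\[
 36e^{140}d\,\frac{y}{1+q}\Bigl[(\cstar/q)^{1/4}\tfrac{1+q}{y}\tfrac{\mathscr E(x_q)}{\mathscr E(y)}\Bigr]^d<36e^{140}d\,(19/20)^d,
\]
by \eqref{eq:BR}.

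\emph{Top term.} Expanding $\mathcal A_d(w)=\sum_q C_{d,q}2^qw^q(1-w)^{d-1-q}$ gives $a_{d,d-1}=\sum_q(-1)^{d-1-q}2^qC_{d,q}$. Hence $|a_{d,d-1}|\le\mathcal F_d(2)\le e^{140}d^{-5/2}\mathscr E(2)^d$. The ratio of $|a_{d,d-1}|\cstar^{d-1}$ to the main term is therefore at most $36e^{140}\tfrac{\mathscr E(2)}{\mathscr E(y)}\bigl[\tfrac{\cstar}{y}\tfrac{\mathscr E(2)}{\mathscr E(y)}\bigr]^{d-1}$, which is of order $e^{140}(77/100)^{d-1}$ by \eqref{eq:BU}. (Alternatively, $a_{d,d-1}>0$ by Proposition~\ref{prop:strong-bijection}, so the top term could be handled by sign alone; but it is subtracted, so the bound is what is needed.)

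\emph{Threshold.} It remains to check that $36e^{140}d(19/20)^d<1/2$ and that the top-term bound is also $<1/2$ for $d\ge2^{12}$. This follows from $\log(20/19)>1/20$. Then $d\log(20/19)\ge2^{12}/20>204$, which dominates $140+\log(72d)$ at $d=2^{12}$, and the left side is decreasing in $d$ beyond that point. The main obstacle is keeping the constants exact and rational, since every step must avoid floating point in the spirit of Lemma~\ref{lem:low-endpoint-bounds}. The conceptual risk is only whether the radius $q$ balances the growth $(\cstar/q)^{d/4}$ against $\mathscr E(x_q)/\mathscr E(y)$, and \eqref{eq:BR} is precisely the certified form of that balance. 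The conclusion is strict positivity of $S_{d,r}(\cstar)$ for all $1\le r\le\lfloor d/4\rfloor$.
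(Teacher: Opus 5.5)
Your proposal is correct and follows the paper's argument. It uses the same decomposition of $\cstar^{r}S_{d,r}(\cstar)$ into the main term $\mathcal A_d(\cstar)=y^{d-1}\mathcal F_d(y)$ minus a head and the top term, the same Cauchy estimate on $|w|=5/113$ combined with \eqref{eq:connected-upper}, \eqref{eq:connected-lower} and \eqref{eq:BR}, and the same bound $|a_{d,d-1}|\le\mathcal F_d(2)$ with \eqref{eq:BU}. The differences are cosmetic: you bound the head by $d$ times its largest term instead of summing the geometric series $\sum_{j<r}(\cstar/q)^j$, and you reach $\mathcal F_d(x_q)$ through $|2w/(1-w)|\le 2q/(1-q)$ rather than $2q/(1+q)<x_q$; the extra factor $d$ costs nothing for $d\ge2^{12}$.
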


\begin{proof}
The exact transform obtained from \eqref{eq:magic-coordinates} is
\[
 \mathcal A_d(z):=\sum_ja_{d,j}z^j
 =(1-z)^{d-1}\mathcal F_d\!\left(\frac{2z}{1-z}\right).
\]
The polynomial identity
\[
 \mathcal A_d(z)=\sum_j C_{d,j}(2z)^j(1-z)^{d-1-j}
\]
will also be useful for taking absolute values on a circle.
For $T_{d,r}(c)=\sum_{j=r}^{d-1}c^{j-r}a_{d,j}$, Cauchy's formula on
$|z|=q$ and positivity of the coefficients of $\mathcal F_d$ give
\[
 T_{d,r}(c)=\mathcal M_{d,r}-c^{-r}\sum_{j<r}c^ja_{d,j},
 \quad \mathcal M_{d,r}=c^{-r}\mathcal A_d(c)
 =c^{-r}y^{d-1}\mathcal F_d(y)>0,
\]
where we used $1-c=y$ and $2c/(1-c)=y$.  On $|z|=q$,
\begin{align*}
 |\mathcal A_d(z)|
 &\le(1+q)^{d-1}\mathcal F_d\!\left(\frac{2q}{1+q}\right)\\
 &\le(1+q)^{d-1}\mathcal F_d(x_q),
\end{align*}
because $2q/(1+q)=5/59<x_q=5/54$ and $\mathcal F_d$ has
nonnegative coefficients.  Cauchy's coefficient estimate and the geometric
sum
\[
 \sum_{j=0}^{r-1}(c/q)^j
 <\frac q{c-q}(c/q)^r
\]
therefore give
\[
 \left|c^{-r}\sum_{j<r}c^ja_{d,j}\right|
 \le\mathcal M_{d,r}\mathcal R_{d,r},
\]
where
\[
 \mathcal R_{d,r}=\frac q{c-q}\left(\frac cq\right)^r
 \left(\frac{1+q}{y}\right)^{d-1}
 \frac{\mathcal F_d(x_q)}{\mathcal F_d(y)}.
\]
Moreover, formula \eqref{eq:magic-coordinates} and positivity of the
$C_{d,j}$ give $|a_{d,d-1}|\le\mathcal F_d(2)$.  The top coordinate is
positive by Proposition~\ref{prop:strong-bijection}, so
$a_{d,d-1}\le\mathcal F_d(2)$, and hence
\[
 c^{d-1-r}a_{d,d-1}\le\mathcal M_{d,r}\mathcal U_d,
 \qquad
 \mathcal U_d=\left(\frac cy\right)^{d-1}
 \frac{\mathcal F_d(2)}{\mathcal F_d(y)}.
\]
Lemmas \ref{lem:connected-envelope} and \ref{lem:low-endpoint-bounds}
imply, for $r\le d/4$,
\[
 \mathcal R_{d,r}\le36e^{140}(19/20)^d
 \frac q{c-q}\frac y{1+q},
 \qquad
 \mathcal U_d\le36e^{140}(77/100)^d\frac yc.
\]
Here $c>1/4$, $q<1/20$, and $y<1+q$, so
$q(c-q)^{-1}y(1+q)^{-1}<1/4<1$.  Also $y/c<3$, because
$y<3c$ is equivalent to $4\sqrt3<7$, which follows from
$3<49/16$.  Therefore
\[
 \mathcal R_{d,r}<36e^{140}(19/20)^d,
 \qquad
 \mathcal U_d<108e^{140}(77/100)^d.
\]
Since $(19/20)^{14}<1/2$, $(77/100)^3<1/2$, and $e<4$,
the first quantity at $d=2^{12}$ is below
$2^{6+280-\lfloor4096/14\rfloor}=2^{-6}$, and the second is below
$2^{7+280-\lfloor4096/3\rfloor}<1/4$.
Both decrease with
$d$.  Finally
\[
 S_{d,r}(c)=T_{d,r}(c)-c^{d-1-r}a_{d,d-1}
 \ge\mathcal M_{d,r}(1-\mathcal R_{d,r}-\mathcal U_d)>0.
\]
\end{proof}

\section{A direct smoothing argument for eventual positivity}\label{sec:wlc}

The remaining tails use coefficients near the opposite end of the refined
expansion. We prove that all of these coefficients are positive by comparing
the marked identity with a positive exponential series. The comparison uses
Darroch's mode theorem \cite[Theorem~4]{Darroch1964} and an elementary Fourier estimate for adjacent
probabilities. It applies uniformly down to the first reverse coefficient.

For $x>0$ define
\[
 p_{n,q}(x)=\frac{qx}{2n-q+qx},\qquad
 X_{n,x}=\sum_{q=1}^{n-1}\operatorname{Bernoulli}(p_{n,q}(x)),
\]
with independent summands, and let $\mu_n(x)$ and $V_n(x)$ denote its mean
and variance. The product in \eqref{eq:P-kernel} gives
\begin{equation}\label{eq:pb-coefficient}
 [z^m]P_n(z)x^m=P_n(x)\Pr(X_{n,x}=m),\qquad
 \frac{\mu_n(x)}{1+x}\le V_n(x)\le\mu_n(x).
\end{equation}
The mean increases continuously and strictly from $0$ to $n-1$.
Thus $\mu_d(x)=m$ determines a unique positive $x$ whenever
$1\le m\le d-2$.

\subsection{Small norms for the correction series}

For a polynomial $F$, write $\|F\|_x=\sum_j|[z^j]F|x^j$.
Put
\[
 \lambda(x)=\exp\left(1+\int_0^1\log(2+(x-1)t)\,dt\right).
\]

\begin{lemma}[Bounds for the primitive product]\label{lem:primitive-wiener}
For $0\le x\le15$ and $n\ge2$,
\begin{equation}\label{eq:primitive-product}
 P_n(x)=\frac{\lambda(x)^n}{\sqrt{4\pi(1+x)}\,n^{5/2}}
 e^{\delta_n(x)},\qquad -\frac1n<\delta_n(x)<0.
\end{equation}
Moreover $\lambda(x)\ge4$, $(1+x)/\lambda(x)\le4/5$, and
\begin{equation}\label{eq:periodic-bound}
 \|J_n-P_n\|_x\le\lambda(x)^n(4/5)^{n/2}\qquad(n\ge1).
\end{equation}
\end{lemma}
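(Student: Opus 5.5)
The three claims can be handled separately. For \eqref{eq:primitive-product}, write
\[
 P_n(x)=\frac{1}{n^2(n-1)!}\prod_{q=1}^{n-1}n\bigl(2+(x-1)q/n\bigr)
 =\frac{n^{n-1}}{n^2(n-1)!}\exp\Bigl(\sum_{q=1}^{n-1}g_x(q/n)\Bigr),
\]
with $g_x(t)=\log(2+(x-1)t)$. Since $n^{n-1}/(n-1)!=n^n/n!$, Robbins' inequalities give
\[
 \frac{n^n}{n!}=\frac{e^n}{\sqrt{2\pi n}}\,e^{-\theta_n},\qquad \frac{1}{12n+1}<\theta_n<\frac{1}{12n}.
\]
For the sum I would use the trapezoid rule rather than the crude Riemann bounds of \eqref{eq:riemann-exact}:
\[
 \sum_{q=1}^{n-1}g_x(q/n)=n\int_0^1 g_x-\tfrac12\bigl(g_x(0)+g_x(1)\bigr)+\varepsilon_n .
\]
Here $g_x(0)+g_x(1)=\log(2(1+x))$, which produces the factor $(4(1+x))^{-1/2}$ once combined with $\sqrt{2\pi n}$; that is exactly $\sqrt{4\pi(1+x)}$. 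Because $g_x$ is concave, the trapezoid error $\varepsilon_n$ is positive. Its size is bounded by
\[
 \varepsilon_n\le \frac{1}{12n}\int_0^1|g_x''|=\frac{g_x'(0)-g_x'(1)}{12n}.
\]
We then have $\delta_n=\varepsilon_n-\theta_n$. The upper bound $\delta_n<0$ follows from $\varepsilon_n<1/(12n+1)$. This is the delicate point: concavity alone gives $\varepsilon_n\le (x-1)^2/(12n\cdot 2(1+x))$ roughly, which can exceed $1/(12n)$ for large $x$. So I expect to need the sharper Euler--Maclaurin remainder with the $g'''$ term, or a direct check at $n=2$ combined with monotonicity. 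This sign control for $0\le x\le 15$ is the step I expect to be the main obstacle. I would first try the Euler--Maclaurin expansion to order $1/n^3$ and bound $(g'(1)-g'(0))/12$ against $1/12$ explicitly on $[0,15]$, handling small $n$ by exact evaluation. The lower bound $\delta_n>-1/n$ is easy, since $\theta_n<1/(12n)$ and $\varepsilon_n>0$.

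The inequalities $\lambda(x)\ge4$ and $(1+x)/\lambda(x)\le 4/5$ are one-variable facts. For the first, $\lambda$ is increasing in $x$ and $\lambda(0)=e\cdot e^{\int_0^1\log(2-t)\,dt}=e\cdot 4/e=4$. For the second, compute the integral in closed form,
\[
 \int_0^1\log(2+(x-1)t)\,dt=\frac{(1+x)\log(1+x)-2\log2}{x-1}-1,
\]
and verify the ratio bound on $[0,15]$. At $x=15$ the ratio is $16/\lambda(15)$, and $\lambda(15)=16^{16/14}/4^{1/7}\approx 20.1\ge 20$. Monotonicity of $(1+x)/\lambda(x)$ (check the sign of the log-derivative) then finishes it.

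For \eqref{eq:periodic-bound}, $J_n-P_n=\sum_{h\mid n,\,h\ge2}\varphi(h)h^{-2}(z-1)^{n-n/h}P_{n/h}(z)$. Since $P_m$ has nonnegative coefficients, we get
\[
 \|J_n-P_n\|_x\le\sum_{h\ge2}\varphi(h)h^{-2}(1+x)^{n-m}P_m(x),\qquad m=n/h\le n/2 .
\]
Next bound $P_m(x)\le\lambda(x)^m$ by \eqref{eq:primitive-product}, using $P_1=1$ for $m=1$. Each term is then at most
\[
 \lambda^n\bigl((1+x)/\lambda\bigr)^{n-m}\le\lambda^n(4/5)^{n/2}(4/5)^{n/2-m}.
\]
The remaining task is to absorb $\sum_{h\mid n,h\ge2}\varphi(h)h^{-2}$ using the spare factors. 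The prefactor $1/(\sqrt{4\pi}m^{5/2})$ and the extra $(4/5)^{n-m-n/2}$ for $h\ge3$ handle this; the $h=2$ term alone carries $1/4\cdot 1/\sqrt{4\pi}<1$. A short case check for small $n$ (where $P_m$ is exact) completes the bound.
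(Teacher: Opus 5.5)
There is a genuine gap, caused by a sign error at the central step. Your architecture is the paper's: trapezoid rule plus Robbins for $P_n$, the closed form of $\lambda$ with a monotonicity check, and submultiplicativity of $\|\cdot\|_x$ with $P_m(x)\le\lambda(x)^m$ for the periodic part.

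For concave $g_x$ the chord lies \emph{below} the graph, so the trapezoid sum underestimates the integral:
$\sum_{q=1}^{n-1}g_x(q/n)\le n\int_0^1g_x-\tfrac12\bigl(g_x(0)+g_x(1)\bigr)$. Hence your $\varepsilon_n$ is $\le0$, not positive. For example, at $x=15$, $n=2$ the left side is $\log 9\approx2.197$ and the right side is $\tfrac{89}{14}\log2-2\approx2.406$.

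This reverses your assessment of the two bounds.
\begin{itemize}
\item The upper bound $\delta_n=\varepsilon_n-\theta_n<0$ is immediate and needs no Euler--Maclaurin refinement. In fact $|\varepsilon_n|$ really is asymptotic to $(g_x'(0)-g_x'(1))/(12n)$, about $0.51/n$ at $x=15$. A \emph{positive} $\varepsilon_n$ of that size would make $\delta_n>0$, so no sharper expansion could have rescued the bound. The ``main obstacle'' you identified is an artifact of the sign.
\item Your lower-bound argument (``$\theta_n<1/(12n)$ and $\varepsilon_n>0$'') is invalid, and this is where the quantitative work lies. You need $|\varepsilon_n|<(1-\tfrac1{12})/n$.
\end{itemize}
The paper gets the needed bound from the chord-gap estimate. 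On each subinterval $[a,b]$ the area between $g_x$ and its chord is at most $(b-a)^2(g_x'(a)-g_x'(b))/8$. Summing gives $|\varepsilon_n|\le\frac{g_x'(0)-g_x'(1)}{8n}=\frac{(x-1)^2}{16n(1+x)}\le\frac{49}{64n}$ on $[0,15]$, and $\frac1{12}+\frac{49}{64}<1$. Your constant $\frac1{12}$ would also suffice, since $\frac1{12}+\frac{49}{96}<1$. However, you stated it without proof; it does hold here, via the Peano kernel and convexity of $|g_x''|$, but that must be argued.

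Two smaller points.
\begin{itemize}
\item From your own closed form, $\log\lambda(15)=\frac{16\log16-2\log2}{14}=\frac{31}{7}\log2$. So $\lambda(15)=2^{31/7}\approx21.5$, and $(1+x)/\lambda(x)=8^{-1/7}<4/5$ at $x=15$. Your expression $16^{16/14}/4^{1/7}=2^{30/7}\approx19.5$ would give a ratio above $4/5$, so it must be corrected. The identity $\log\frac{1+x}{\lambda(x)}=-\frac{2\log((1+x)/2)}{x-1}$ makes the monotonicity transparent: the quotient $\log((1+x)/2)/(x-1)$ is positive and decreasing, so the ratio increases and $x=15$ is the extremal point.
\item For the periodic bound, your plan (exponential decay for $h\ge3$, a separate $h=2$ term, and a small-$n$ check) can be completed. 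It is simpler to keep the factor from $P_m(x)\le\lambda(x)^m m^{-5/2}$, which is valid also at $m=1$, and write $m^{-5/2}=h^{5/2}n^{-5/2}$. With $\varphi(h)\le h$ and at most $n$ divisors, $\sum_{h\mid n,\,h\ge2}\varphi(h)h^{-2}m^{-5/2}\le n^{-5/2}\sum_{h\mid n}h^{3/2}\le1$, with no case analysis.
\end{itemize}
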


\begin{proof}
For $f(t)=\log(2+(x-1)t)$, the trapezoidal estimate for a concave function gives
\[
 \sum_{q=1}^{n-1}f(q/n)
 =n\int_0^1f(t)\,dt-\frac{f(0)+f(1)}2-E_n,\qquad
 0\le E_n\le\frac{(x-1)^2}{16n(1+x)}\le\frac{49}{64n}.
\]
For completeness, on an interval $[a,b]$ the integral of the gap above
the chord is at most $(b-a)^2(f'(a)-f'(b))/8$; summing this bound over
the $n$ equal subintervals gives the display.
Robbins' formula \cite{Robbins1955},
\[
 n!=\sqrt{2\pi n}(n/e)^ne^{\rho_n},\qquad
 \frac1{12n+1}<\rho_n<\frac1{12n},
\]
and $n^{n-1}/(n-1)!=n^n/n!$ give
$\delta_n=-\rho_n-E_n$. Since $1/12+49/64<1$, this proves
\eqref{eq:primitive-product}.

The function $\lambda$ is increasing and $\lambda(0)=4$. Direct integration gives
\[
 \log\frac{1+x}{\lambda(x)}
 =-\frac{2\log((1+x)/2)}{x-1}.
\]
The quotient on the right, before the minus sign, is positive and decreasing,
with its continuous value at $x=1$. At $x=15$ its exponential is
$8^{1/7}>5/4$, since $8\cdot4^7>5^7$. This proves the ratio bound.
For a divisor $h\ge2$ of $n$, put $a=n/h$. Positivity of $P_a$ gives
\[
 \|(z-1)^{n-a}P_a(z)\|_x
 \le(1+x)^{n-a}P_a(x)
 \le\lambda(x)^n a^{-5/2}(4/5)^{n/2}.
\]
The bound $P_a(x)\le\lambda(x)^aa^{-5/2}$ also holds at $a=1$.
Multiply by $\varphi(h)/h^2$, use $\varphi(h)\le h$, and sum
$h^{3/2}\le n^{5/2}$ over divisors by bounding their number by $n$.
This proves \eqref{eq:periodic-bound}; at $n=1$ its left side is zero.
\end{proof}

The useful information about $\mathcal H$ consists of its total norm and
its first moments. These remain small even though its coefficients have signs.

\begin{lemma}[Correction norms and first moments]\label{lem:correction-moments}
For $0\le x\le15$, set
\[
 \widehat h_s(x)=\|\mathcal H_s\|_x\lambda(x)^{-s},\qquad
 h_{s,k}(x)=[z^k]\mathcal H_s(z)x^k\lambda(x)^{-s}.
\]
Then
\begin{align}
 \sum_{s\ge0}\widehat h_s(x)&<16,&
 \sum_{s\ge0}s\widehat h_s(x)&<176,\label{eq:H-moments}\\
 \sum_{s,k\ge0}k|h_{s,k}(x)|&\le\frac{176x}{1+x},&
 \widehat h_s(x)&<2^{16}s^{-5/2}\quad(s\ge1).\label{eq:H-degree-point}
\end{align}
Also
\begin{equation}\label{eq:J-first-moment}
 \sum_{n\ge1}n\|J_n\|_x\lambda(x)^{-n}<\frac{11}{1+x},
\end{equation}
and the real number
\begin{equation}\label{eq:positive-limit}
 E(x):=\mathcal H(\lambda(x)^{-1},x)
 =\exp((x-1)J(\lambda(x)^{-1},x))
\end{equation}
satisfies $E(x)>1/16$.
\end{lemma}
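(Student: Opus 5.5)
The plan is to treat $\mathcal H=\exp((z-1)J)$ as an exponential of a series whose $\|\cdot\|_x$-norms, after rescaling by $\lambda(x)^{-n}$, are summable with a $n^{-5/2}$ tail. Then Lemma~\ref{lem:exponential-envelope} and its first-moment analogue yield every bound in \eqref{eq:H-moments}--\eqref{eq:positive-limit}. Concretely, set $g_n(x)=\|(z-1)J_n\|_x\lambda(x)^{-n}$. Since $\|\cdot\|_x$ is submultiplicative and $\|z-1\|_x=1+x$, we have $g_n\le(1+x)\|J_n\|_x\lambda^{-n}$. Write $J_n=P_n+(J_n-P_n)$. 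Because $P_n$ has nonnegative coefficients, $\|P_n\|_x=P_n(x)$. Lemma~\ref{lem:primitive-wiener} then gives
\[
 \|J_n\|_x\lambda^{-n}\le\frac{n^{-5/2}}{\sqrt{4\pi(1+x)}}+(4/5)^{n/2}.
\]
The first term times $(1+x)$ is $\sqrt{(1+x)/(4\pi)}\,n^{-5/2}\le n^{-5/2}$ for $x\le15$. The second term times $(1+x)$ is dangerous only through the factor $1+x$. To control it I would not use the crude bound. Instead I would go back to the proof of \eqref{eq:periodic-bound} and retain the stronger estimate $(1+x)^{n-a}P_a(x)\lambda^{-n}\le a^{-5/2}(4/5)^{n-a}$. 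This uses $(1+x)/\lambda\le4/5$ on the $n-a\ge n/2$ excess factors, and the surplus factor $(4/5)^{n/2}$ absorbs the extra $1+x$ once one factor of $(1+x)/\lambda$ is spent. With this, $b_n:=g_n$ satisfies $b_n\le Cn^{-5/2}$ for a modest explicit $C$, and $A=\sum b_n$ can be bounded numerically by a small constant. The moment claim \eqref{eq:J-first-moment} is the same computation weighted by $n$. There the $n^{-3/2}$ primitive sum gives $\zeta(3/2)/\sqrt{4\pi(1+x)}$, and the periodic part gives a geometric sum. Both are $O(1/(1+x))$ after the same bookkeeping, and the goal is to fit under $11/(1+x)$.

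With these constants in hand, the four bounds follow in turn.

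\begin{enumerate}
\item The total-norm bound follows from $\sum_s\widehat h_s\le\exp(\sum_n b_n)=e^A$, by expanding the exponential and using submultiplicativity. The target $e^A<16$ needs $A<\log16\approx2.77$, which fixes how tight $C$ must be.
\item The first moment in $s$ follows by differentiating in $u$: $\sum_s s\widehat h_s\le(\sum_n nb_n)e^A$. Since $\sum_n nb_n\le(1+x)\cdot11/(1+x)=11$, this gives $11\cdot16=176$.
\item The $k$-moment bound \eqref{eq:H-degree-point} uses the derivation $x\partial_x$ acting on coefficientwise absolute values. Degrees add under products, and $(z-1)J_n$ has degree-weighted norm at most $\|z J_n\|_x+\|J_n'\|$-type terms. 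Equivalently, I would note that $x\,\frac{d}{dx}\|F\|_x$ is the degree moment, and that each factor contributes a factor $x/(1+x)$ relative to its $n$-weight. The reason is that the Bernoulli means satisfy $\mu_n(x)\le (n-1)x/(1+x)$-type bounds from \eqref{eq:pb-coefficient}, so $\sum_k k|[z^k]\cdot|x^k\le\frac{x}{1+x}n\|\cdot\|_x$ up to the same constants. This yields $176x/(1+x)$.
\item The pointwise decay $\widehat h_s<2^{16}s^{-5/2}$ is exactly Lemma~\ref{lem:exponential-envelope}, applied with the constants $C,A$ above.
\end{enumerate}

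Finally, $E(x)=\exp((x-1)J(\lambda^{-1},x))$ is a real exponential. Its exponent is at least $-|x-1|\sum_n\|J_n\|_x\lambda^{-n}$, which by the $n^{-5/2}$ bound is at least $-A$ up to replacing $1+x$ by $|x-1|$. Hence $E(x)\ge e^{-A}>1/16$.

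The main obstacle I expect is the constant bookkeeping in the first step. The stated bound \eqref{eq:periodic-bound} carries no $1+x$ decay, so multiplying by $(1+x)$ up to $16$ destroys the numbers. One must therefore reprove it with the sharper factor, using $(4/5)^{n-a}$ with $n-a\ge n/2\ge1$, and then check numerically that $A<\log 16$ and $\sum nb_n\le11$ uniformly on $[0,15]$. I would first try the worst cases $x=0$ and $x=15$, using monotonicity of $(1+x)/\lambda(x)$.
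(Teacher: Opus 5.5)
Your overall architecture matches the paper's. You dominate $\mathcal H=\exp((z-1)J)$ by the exponential of a nonnegative series. From it you read off the total mass $e^{A}$, the first $u$-moment $(\sum_n nb_n)e^{A}$, the pointwise decay from Lemma~\ref{lem:exponential-envelope}, and $E(x)\ge e^{-A}$.

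\textbf{The $z$-moment step fails as written.} You claim $\sum_k k\,|[z^k]F|\,x^k\le\frac{nx}{1+x}\|F\|_x$ for $F=(z-1)J_n$, but this is false for these signed polynomials. Already $J_2=P_2+\frac14(z-1)=\frac{1+z}2$, so $(z-1)J_2=\frac{z^2-1}2$. Its degree moment is $x^2$, which exceeds $\frac{2x}{1+x}\cdot\frac{1+x^2}2$ for every $x>1$. Cancellation reduces $\|F\|_x$ without reducing the degree moment proportionally, and the factor $z-1$ has no Bernoulli mean. The missing device is to majorize \emph{before} taking any moment: replace every $z-1$ by $1+z$, that is, use
\[
 \mathcal B(u,z)=(1+z)\sum_{n\ge1}u^n\sum_{h\mid n}\frac{\varphi(h)}{h^2}(1+z)^{n-n/h}P_{n/h}(z).
\]
Its exponential dominates $\mathcal H$ coefficientwise in absolute value. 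Each product $(1+z)^{n-a+1}P_a(z)$ has logarithmic derivative at most $n/(1+x)$ at $z=x$; here your observation $p_{a,q}(x)\le x/(1+x)$ applies legitimately. With $R=\lambda(x)^{-1}$, $\beta=\mathcal B(R,x)$ and $\beta_1=R\mathcal B_u(R,x)$, the $z$-moment is therefore at most $x\mathcal B_z(R,x)e^{\beta}\le\frac{x}{1+x}\beta_1e^{\beta}$. So your $A$ and $\sum_n nb_n$ must be these majorant quantities $\beta$ and $\beta_1$, not sums of $\|(z-1)J_n\|_x\lambda^{-n}$.

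\textbf{The constant bookkeeping also needs reorganizing.} The extra factor $1+x$ cannot be absorbed by a surplus $(4/5)^{n/2}$. For $n=2$, your estimate bounds the $h=2$ term only by $\frac14(1+x)\frac45$, which is as large as $\frac{16}5$. That term actually equals $\frac14q^2\le\frac4{25}$, where $q=(1+x)R\le\frac45$. The factor is absorbed by the base term instead, through $v_a=(1+x)P_a(x)R^a$. This equals $q$ for $a=1$ and is below $\frac65a^{-5/2}$ for $a\ge2$. It is not below $a^{-5/2}$, since $\sqrt{(1+x)/(4\pi)}$ reaches $2/\sqrt\pi>1$ at $x=15$. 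Writing $n=ha$ gives $\beta=\sum_a v_a\sum_h\frac{\varphi(h)}{h^2}q^{(h-1)a}$.

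Nor can $A<\log16$ come from a pointwise constant $C$ with $b_n\le Cn^{-5/2}$. Near $x=15$, the term $h=n=5$ alone contributes about $\frac4{25}q^5\,5^{5/2}\approx2$ to $n^{5/2}b_n$, and the primitive term adds about $1$. Any admissible $C$ thus already exceeds $\log16$. Instead, sum the $a=1$ column exactly: $\sum_h\frac{\varphi(h)}{h^2}q^h\le-\log(1-q)\le\log5$ for $\beta$, and $q/(1-q)\le4$ for $\beta_1$. For $a\ge2$, bound the inner sums by $\frac{17}9$ and $\frac{25}9$. This yields $\beta<\frac52$ and $\beta_1<11$ uniformly on $[0,15]$, with no endpoint checks. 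Your remaining steps then go through as in the paper.
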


\begin{proof}
Introduce the power series with nonnegative coefficients
\[
 \mathcal B(u,z)=(1+z)\sum_{n\ge1}u^n
 \sum_{h\mid n}\frac{\varphi(h)}{h^2}
 (1+z)^{n-n/h}P_{n/h}(z).
\]
Its exponential majorizes the absolute values of the coefficients of
$\mathcal H(u,z)$. Write $R=\lambda(x)^{-1}$,
$q=(1+x)R\le4/5$, and
\[
 v_a=(1+x)P_a(x)R^a,\qquad
 \beta=\mathcal B(R,x),\qquad
 \beta_1=R\mathcal B_u(R,x).
\]
Here $v_1=q$, and \eqref{eq:primitive-product}, together with
$\pi>3$, gives $v_a<(6/5)a^{-5/2}$ for $a\ge2$.
The latter bound also holds for $a=1$.
Changing variables from $n,h$ to $a,h$, where $n=ha$, yields
\[
 \beta=\sum_{a\ge1}v_a\sum_{h\ge1}
 \frac{\varphi(h)}{h^2}q^{(h-1)a}.
\]
The contribution from $a=1$ is at most $-\log(1-q)\le\log5<13/8$.
For $a\ge2$, the inner sum is at most
$1+q^a/(2(1-q^a))\le17/9$.
The elementary sums
\[
 \sum_{a\ge2}a^{-5/2}<\frac38,\qquad
 \sum_{a\ge2}a^{-3/2}<2
\]
follow by separating the first terms and integrating the tails.
Consequently
\begin{equation}\label{eq:beta-bounds}
 \beta<\frac{13}{8}+\frac65\frac38\frac{17}{9}
 =\frac{99}{40}<\frac52,\qquad
 \beta_1<4+\frac65\frac{25}{9}\,2<11.
\end{equation}
For the second estimate use $\varphi(h)/h\le1$; the $a=1$ contribution
is at most $q/(1-q)\le4$, and $1-q^a\ge9/25$ for $a\ge2$.
The bound $\log5<13/8$ is certified by
$\sum_{j=0}^5(13/8)^j/j!>5$.
For the first tail sum above,
$2^{-5/2}+3^{-5/2}+\int_3^\infty t^{-5/2}dt
=1/(4\sqrt2)+1/(3\sqrt3)<3/8$; the second follows from
$2^{-3/2}+\int_2^\infty t^{-3/2}dt<2$.

It follows that the total mass of $e^{\mathcal B(Ru,x)}$ is
$e^\beta<e^{5/2}<16$, and its first $u$-moment is
$e^\beta\beta_1<176$. To control its $z$-moment, hold $R$ fixed and observe that the
logarithmic derivative of every polynomial
$(1+z)^{n-a+1}P_a(z)$ at $z=x$ is at most $n/(1+x)$.
Indeed every factor of $P_a$ has logarithmic derivative
$q/(2a-q+qx)\le1/(1+x)$.
Thus $x\mathcal B_z(R,x)\le x\beta_1/(1+x)$, proving the first
three estimates. Here $e^{5/2}<16$ follows from $e<3$ and
$9\sqrt3<16$. The bound for $\beta_1$ also proves
\eqref{eq:J-first-moment}. Finally
$|(x-1)J(R,x)|\le\beta<5/2$, which proves \eqref{eq:positive-limit}.

For the pointwise bound let $b_n=[u^n]\mathcal B(Ru,x)$.
The primitive term is at most $(6/5)n^{-5/2}$. For the remaining divisors,
$\varphi(h)/h^2\le1/h=a/n$ gives
\[
 n^{5/2}b_n
 \le\frac65+n^{3/2}(4/5)^{n/2}\sum_{a\ge1}av_a
 <\frac65+12\frac{18}{5}<45.
\]
Here $\sum_{a\ge1}a^{-3/2}<3$, and maximization over positive real $t$ gives
\[
 \sup_{t>0}t^{3/2}(4/5)^{t/2}
 =\left(\frac3{e\log(5/4)}\right)^{3/2}
 <\left(\frac{81}{16}\right)^{3/2}=\frac{729}{64}<12.
\]
We used $e>8/3$ and $\log(5/4)>2/9$.
Apply Lemma~\ref{lem:exponential-envelope} with $C=45$ and $A=\beta<5/2$.
It gives
\[
 [u^s]e^{\mathcal B(Ru,x)}
 <45\cdot16\left((5/2)^3+6(5/2)^2+7(5/2)+1\right)s^{-5/2}
 =51570s^{-5/2}<2^{16}s^{-5/2}.
\]
This completes the proof.
\end{proof}

\subsection{Comparing probabilities by discrete smoothing}

The next argument uses the standard combination of smoothing and replacement
of independent summands. Related methods in discrete approximation are
developed by Barbour and Xia \cite{BarbourXia1999}. We give the pointwise
estimate and its application here with explicit constants.

\begin{lemma}[Uniform comparison of point probabilities]\label{lem:pb-smoothing}
Let $d\ge16$, $0<x\le15$, and $\mu_d(x)=m$ be an integer with
$1\le m\le d-2$. If $n=d-s\ge d/2$ and $k\ge0$ is an integer, then
\begin{equation}\label{eq:smoothing-ratio}
 \left|\frac{\Pr(X_{n,x}=m-k)}{\Pr(X_{d,x}=m)}-1\right|
 \le\frac{1792(3sx+k)}{\sqrt{m+1}}.
\end{equation}
\end{lemma}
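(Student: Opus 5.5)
The plan is to compare both point probabilities with a single reference quantity, the mode probability $\Pr(X_{d,x}=m)$. Every discrepancy will be controlled by adjacent probability differences of Poisson--binomial laws, and each such difference costs a factor $O(1/V)$. The lower bound on the mode probability is of order $1/\sqrt V$. Dividing one by the other produces the factor $1/\sqrt{m+1}$, because \eqref{eq:pb-coefficient} gives $V_d(x)\ge m/(1+x)\ge m/16$ for $x\le15$.

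\emph{Step 1 (lower bound at the mode).} Since $\mu_d(x)=m$ is an integer, Darroch's theorem \cite[Theorem~4]{Darroch1964} makes $m$ the unique mode of $X_{d,x}$. By Chebyshev's inequality, at least half the mass lies within $\sqrt{2V_d}$ of $m$. That window contains at most $2\sqrt{2V_d}+1$ integers, so
\[
 \Pr(X_{d,x}=m)\ge\frac1{2(2\sqrt{2V_d}+1)}\ge\frac{c_1}{\sqrt{m+1}},
\]
where the last step uses $V_d\le\mu_d=m$.

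\emph{Step 2 (adjacent differences).} For any sum $Y$ of independent Bernoulli variables with variance $V$, I will bound $|\Pr(Y=j+1)-\Pr(Y=j)|$ by Fourier inversion. This difference is at most $\frac1{2\pi}\int_{-\pi}^{\pi}|1-e^{i\theta}|\,|\varphi_Y(\theta)|\,d\theta$. Using $|1-p+pe^{i\theta}|^2=1-2p(1-p)(1-\cos\theta)\le\exp(-4p(1-p)\theta^2/\pi^2)$, one has $|\varphi_Y(\theta)|\le e^{-4V\theta^2/\pi^2}$. With $|1-e^{i\theta}|\le|\theta|$ the integral is at most $c_2/V$. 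I apply this to $X_{n,x}$ and to the intermediate sums produced in Step 3. Their variances must be compared with $m$. Since $n\ge d/2$ and $p_{n,q}(x)=f(q/n)$ with $f(t)=tx/(2-t+tx)$, a Riemann comparison gives $\mu_n\ge\mu_d/2-O(1)$. Then \eqref{eq:pb-coefficient} gives $V\ge c_3(m+1)$ after the lower-order terms are absorbed. The case of very small $m$ is trivial, because there the right side of \eqref{eq:smoothing-ratio} exceeds $2$, which already bounds the left side.

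\emph{Step 3 (replacement of summands).} I transform $X_{d,x}$ into $X_{n,x}$ one Bernoulli summand at a time. Changing a single parameter from $p$ to $p'$, or deleting a summand by setting $p'=0$, changes every point probability by $|p-p'|$ times an adjacent difference of the remaining sum. By Step 2 this is at most $|p-p'|c_2/V$. Summands are matched through the common profile $f$. The $n-1$ summands of $X_n$ sit at $t=q/n$, and the $d-1$ summands of $X_d$ sit at $t=q/d$. I will match $q\le n-1$ with $q$ and delete the $s$ remaining summands, each of which has $p\le x$. Since $f'(t)\le x$ on $[0,1]$ for the relevant range, with a factor $2$ or $3$ to be checked, the total parameter change is $\sum_q|f(q/n)-f(q/d)|+\sum_{\text{deleted}}p\le 3sx$. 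This gives
\[
 |\Pr(X_{n,x}=j)-\Pr(X_{d,x}=j)|\le\frac{3c_2sx}{c_3(m+1)}.
\]

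\emph{Step 4 (shift by $k$).} Telescoping $k$ adjacent differences of $X_{n,x}$ gives $|\Pr(X_{n,x}=m-k)-\Pr(X_{n,x}=m)|\le c_2k/(c_3(m+1))$. Combining Steps 3 and 4 and dividing by Step 1 yields \eqref{eq:smoothing-ratio}, with constant $c_2/(c_1c_3)$ to be tracked down to $1792$.

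The main obstacle is Step 3 together with the variance comparison. The intermediate sums in the replacement chain must all keep variance $\gtrsim m$. The matching must also really cost $O(sx)$ rather than $O(s)$ when $x$ is small, which is where $f'(t)=O(x)$ uniformly in $t$ is essential. I would first try monotone matching in $q$ and bound $|f(q/n)-f(q/d)|\le x\,q\,s/(nd)\cdot O(1)$, whose sum is $O(sx)$.
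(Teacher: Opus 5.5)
Your plan has the same structure as the paper's proof. It uses a Fourier bound on adjacent point differences, and replaces one summand at a time, matching $q$ with $q$ and deleting the last $s$ summands at total cost at most $3sx$ (since $|f_x'|\le 2x$). It then spends $k$ more adjacent differences on the shift, and uses Darroch's theorem plus Chebyshev to bound the mode probability below.

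\textbf{The gap is in Step~2.} Your comparison $\mu_n\ge\mu_d/2-O(1)$ has an additive loss, so it yields $V\ge c_3(m+1)$ only for $m$ above some threshold. Your disposal of small $m$ does not work. For $k=0$ the right side of \eqref{eq:smoothing-ratio} is $5376\,sx/\sqrt{m+1}$. Since $\mu_d(x)\ge x(d-1)/32$, we have $x\le 32m/(d-1)$, so for bounded $m$ and $s$ the right side is $O(1/d)$, far below $2$. Nor is the left side a priori at most $2$, since the ratio is controlled only by $1/\Pr(X_{d,x}=m)$. Small $m$ with $d$ huge is exactly a regime in which Proposition~\ref{prop:positive-reverse} uses the lemma. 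You also leave open how to bound the variance of each law of ``all the other summands'' along the replacement chain. That law has hybrid parameters and one summand removed, so a mean comparison for $X_{n,x}$ alone is not what is needed.

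\textbf{What is missing.} You need a variance bound that is multiplicative in $m$ and uniform along the chain, together with an adjacent-difference bound that needs no lower bound on $V$.

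\begin{enumerate}
\item Every law used in a replacement, and $X_{n,x}$ itself, retains (with at most one exception) the summands indexed by $1\le q\le N=\lceil d/2\rceil-1$.
\item Each such parameter $\theta$ lies between $f_x(q/d)\ge xq/(9d)$ and $p_*=x/(1+x)$, so $\theta(1-\theta)\ge p_*q/(9d)$.
\item Summing, and using $d\ge16$ and $m<dp_*$, gives $V\ge p_*N(N-1)/(18d)\ge 21p_*d/2304>m/128$ for every $m\ge1$, with no additive error.
\item Replace $c_2/V$ by $\min\{1,1/V\}\le 2/(1+V)\le 256/(m+1)$.
\item Combine with $\Pr(X_{d,x}=m)>1/(7\sqrt{m+1})$, from Chebyshev at radius $2\sqrt{V_d+1}$, a window containing at most $5\sqrt{m+1}$ integers.
\end{enumerate}
The constant then comes out as $256\cdot7=1792$, and no small-$m$ case arises.

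\textbf{A minor slip.} The bound $|1-p+pe^{i\theta}|^2\le\exp(-4p(1-p)\theta^2/\pi^2)$ gives $|\varphi_Y(\theta)|\le e^{-2V\theta^2/\pi^2}$, not $e^{-4V\theta^2/\pi^2}$. The integral is then $\pi(1-e^{-2V})/(4V)<1/V$, which still suffices.
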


\begin{proof}
If $Y$ is a sum of independent Bernoulli variables with variance $V$, Fourier
inversion gives, for $V>0$,
\[
 \sup_j|\Pr(Y=j-1)-\Pr(Y=j)|
 \le\frac1\pi\int_0^\pi t e^{-2Vt^2/\pi^2}\,dt
 =\frac{\pi(1-e^{-2V})}{4V}<\frac1V.
\]
We used $|1-e^{it}|\le|t|$,
$|\mathbb Ee^{itY}|\le e^{-2V\sin^2(t/2)}$, and
$\sin(t/2)\ge t/\pi$ on $[0,\pi]$. Combining this with the trivial bound
$1$, including $V=0$, proves
\begin{equation}\label{eq:adjacent-mass}
 \sup_j|\Pr(Y=j-1)-\Pr(Y=j)|\le\frac2{1+V}.
\end{equation}

We compare $X_{d,x}$ with $X_{n,x}$ by replacing the first $n-1$
parameters and deleting the last $s$ summands. Put
$f_x(t)=xt/(2+(x-1)t)$ and $p_*=x/(1+x)$.
Every retained parameter lies between $f_x(q/d)$ and $f_x(q/n)$
and is at most $p_*$. Each replacement uses the law of all the other
summands. Such a law retains, with at most one exception, the parameters
indexed by $1\le q\le N=\lceil d/2\rceil-1$.
Since $q/d\le1/2$,
\[
 f_x(q/d)\ge\frac{xq}{9d}.
\]
If $\theta$ is any of these retained parameters, then
$\theta(1-\theta)\ge p_*q/(9d)$. Therefore every law used in a replacement,
and also $X_{n,x}$ itself, has variance at least
\[
 \frac{p_*N(N-1)}{18d}
 \ge\frac{21p_*d}{2304}>\frac{m}{128}.
\]
Here $d\ge16$ gives
$N(N-1)\ge(d/2-1)(d/2-2)\ge21d^2/128$, and
$m=\mu_d(x)\le(d-1)p_*<dp_*$.
By \eqref{eq:adjacent-mass}, the adjacent probability difference of every
such law is at most $256/(m+1)$.

Changing a Bernoulli parameter from $p$ to $p'$ changes a point probability
by at most $|p-p'|$ times the adjacent difference of the remaining law.
Since $|f_x'|\le2x$, the sum of the parameter changes and deletions is
at most $3sx$. Shifting the argument by $k$ costs at most $k$ further
adjacent differences. Hence
\[
 |\Pr(X_{n,x}=m-k)-\Pr(X_{d,x}=m)|
 \le\frac{256(3sx+k)}{m+1}.
\]

Darroch's mode theorem \cite[Theorem~4]{Darroch1964} states that,
for a sum $Y$ of independent Bernoulli variables, any integer $q$
maximizing $\Pr(Y=q)$ satisfies $|q-\mathbb EY|<1$.
See also Pitman's exposition \cite[p.~284]{Pitman1997} for its connection
with real-rooted generating polynomials. Since $X_{d,x}$ has integer mean
$m$, this result makes $m$ its unique mode, so $\Pr(X_{d,x}=m)$ is its
largest point probability. Chebyshev's inequality assigns mass greater than
$3/4$ to the interval of radius $2\sqrt{V_d+1}$ around $m$.
It contains at most $5\sqrt{m+1}$ integers, because $V_d\le m$.
Consequently
\begin{equation}\label{eq:modal-lower}
 \Pr(X_{d,x}=m)>\frac1{7\sqrt{m+1}}.
\end{equation}
Dividing the preceding difference bound by this lower bound proves
\eqref{eq:smoothing-ratio}.
\end{proof}

\subsection{One estimate for all reverse indices}

\begin{proposition}[Positive reverse coefficients]\label{prop:positive-reverse}
If $d\ge2^{72}$ and $1\le m\le d-2-\lfloor d/4\rfloor$, then
$a_{d,d-1-m}>0$.
\end{proposition}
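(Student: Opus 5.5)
We prove positivity by exponential tilting of the marked identity \eqref{eq:marked-identity}. Since $a_{d,d-1-m}=[z^m]\mathscr R_d(z)$, we fix $m$ in the stated range and choose the unique $x>0$ with $\mu_d(x)=m$. For the reverse range $m\le d-2-\lfloor d/4\rfloor$ we first check that this $x$ satisfies $x\le15$, so that Lemmas~\ref{lem:primitive-wiener}--\ref{lem:pb-smoothing} apply. The check compares $\mu_d(x)\approx d\int_0^1 f_x(t)\,dt$ with $3d/4$, using that $f_x$ is increasing in $x$; a crude bound on this integral at $x=15$ suffices.

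We then multiply the identity by $x^m\lambda(x)^{-d}$ and read off the coefficient $[z^m]$. The main term is the $s=0$ primitive contribution $P_d(x)\Pr(X_{d,x}=m)\lambda^{-d}$, by \eqref{eq:pb-coefficient}. More generally, each summand $(1-s/d)[z^{m-k}]J_{d-s}\,[z^k]\mathcal H_s\,x^m\lambda^{-d}$ is split into a primitive part, where $J_{d-s}$ is replaced by $P_{d-s}$, and a periodic remainder.

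For the primitive parts with $s\le d/2$, we write $P_{d-s}(x)\lambda^{-(d-s)}=P_d(x)\lambda^{-d}\,(d/(d-s))^{5/2}e^{O(1/n)}$ by \eqref{eq:primitive-product}. We then use Lemma~\ref{lem:pb-smoothing} to replace $\Pr(X_{d-s,x}=m-k)$ by $\Pr(X_{d,x}=m)\bigl(1+O((3sx+k)/\sqrt{m+1})\bigr)$. Summing against $h_{s,k}(x)$ gives
\[
 P_d(x)\lambda^{-d}\Pr(X_{d,x}=m)\bigl(E(x)+\mathrm{err}\bigr).
\]
Here $\sum_{s,k}h_{s,k}=E(x)>1/16$ by \eqref{eq:positive-limit}. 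The error is controlled by the first moments in \eqref{eq:H-moments}--\eqref{eq:H-degree-point}: the terms $s\widehat h_s$, with $x\le15$, and $k|h_{s,k}|$ give an error of order $1792\cdot(3\cdot15\cdot176+176)/\sqrt{m+1}$. The factors $(1-s/d)$ and $(d/(d-s))^{5/2}-1$ contribute $O(s/d)$ errors, and the $e^{\delta}$ factors contribute $O(1/d)$ errors. The remainder is the part of $E(x)$ coming from indices $s>d/2$ or negative $k$ exceeding $m$. We bound it by $\widehat h_s<2^{16}s^{-5/2}$ together with $\|J_n\|_x\lambda^{-n}$, which is bounded via \eqref{eq:J-first-moment}, giving a total of $O(d^{-3/2})$.

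Large $s$ and the periodic remainders are handled by absolute values. By \eqref{eq:periodic-bound}, $\|J_n-P_n\|_x\lambda^{-n}\le(4/5)^{n/2}$. For $n=d-s\ge d/2$ this is exponentially small, and it is multiplied by the total mass $<16$. For $s>d/2$ we instead use $\widehat h_s<2^{16}s^{-5/2}$ and $\|J_n\|_x\lambda^{-n}<11$, giving $O(d^{-3/2})$. These errors must all be compared with the main term. Its size is at least $P_d(x)\lambda^{-d}/(7\sqrt{m+1})$ by \eqref{eq:modal-lower}, and $P_d(x)\lambda^{-d}\gtrsim d^{-5/2}/\sqrt{64\pi}$. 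The $O(d^{-3/2})$-type errors must therefore also carry the factor $d^{-5/2}$, which they do because each involves a $P$ or $\widehat h$ factor with index $\ge d/2$.

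The main obstacle is the relative error $1792\cdot O(176\cdot 46)/\sqrt{m+1}$ coming from the smoothing step. This requires $\sqrt{m+1}\gtrsim 2^{25}$, that is, $m$ of order $2^{50}$ or more. For small $m$ the modal lower bound is weak, and $x$ is small. I would handle small $m$ by exploiting the factor $x$ in $3sx$ and the bound $\sum k|h_{s,k}|\le176x/(1+x)$: when $\mu_d(x)=m$ we have $x\asymp m/(d\log d)$, so both error terms shrink with $x$. Making this rigorous, possibly with the variance bound $V\ge m/128$, for all $m\ge1$ with explicit constants compatible with $d\ge2^{72}$ is where I expect the delicate bookkeeping to lie.
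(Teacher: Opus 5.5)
Your overall route is the paper's. You tilt at $\mu_d(x)=m$ and verify $x<15$. You then split the marked identity into four parts: primitive terms with $s\le d/2$, full terms with $s>d/2$, the omitted tail of $E(x)$, and periodic terms. Finally you compare with $E(x)>1/16$ using Lemma~\ref{lem:pb-smoothing} and the moment bounds.

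The gap is the step you flag yourself. After replacing $x$ by $15$ in the smoothing error, your estimate only works for $m$ of order $2^{50}$ or more. As written, nothing covers the small-$m$ part of the range $1\le m\le d-2-\lfloor d/4\rfloor$. What is missing is not a separate small-$m$ analysis but a one-line lower bound for $m$ in terms of $x$. For $x\le15$ every denominator satisfies $2d-q+qx\le16d$, so $p_{d,q}(x)\ge qx/(16d)$ and
\[
 m=\mu_d(x)\ge\frac{x(d-1)}{32},
 \qquad\text{hence}\qquad
 \frac{x}{\sqrt{m+1}}<\sqrt{\frac{32x}{d-1}}<\frac{64}{\sqrt d}.
\]
Now keep the factor $x$ in both first-moment errors instead of substituting $x=15$: the term $3sx$ against $\sum_s s\widehat h_s<176$, and $\sum_{s,k}k|h_{s,k}|\le176x/(1+x)$. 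With $A_{d,s}<4$, the smoothing contribution for $s\le d/2$ is then at most $4\cdot1792\cdot704\,x/\sqrt{m+1}<322961408/\sqrt d$, uniformly in $m$. Together with the other parts this gives a total error below $2^{31}/\sqrt d\le1/32$, plus an exponentially small periodic term, once $d\ge2^{72}$. Your heuristic $x\asymp m/(d\log d)$ is also off: for $x\le15$ one has $m\asymp xd$, with no logarithm.

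Two smaller pieces of bookkeeping also need repair.

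\textbf{Terms with $k>m$ and $s\le d/2$.} These are not controlled by the tail bound $\widehat h_s<2^{16}s^{-5/2}$. They are covered by Lemma~\ref{lem:pb-smoothing} itself, which holds for every $k\ge0$. For $k>m$ the ratio $\Pr(X_{n,x}=m-k)/\Pr(X_{d,x}=m)$ vanishes and the right side of the lemma exceeds $1$. So these terms fall under the same $x/\sqrt{m+1}$ error.

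\textbf{Terms with $s>d/2$.} Neither an absolute $O(d^{-3/2})$ nor an absolute $O(d^{-5/2})$ bound suffices, since the main term is only of order $d^{-5/2}/\sqrt{m+1}$. The weight $1-s/d=n/d$ is essential. Combine it with $\sum_n n\|J_n\|_x\lambda(x)^{-n}<11/(1+x)$, $\widehat h_s<6\cdot2^{16}d^{-5/2}$, $\lambda(x)^d/P_d(x)<32d^{5/2}$, and $1/\Pr(X_{d,x}=m)<7\sqrt{m+1}\le7\sqrt d$. This yields a relative error below $1344\cdot2^{16}\cdot11/\sqrt d$.
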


\begin{proof}
Choose $x$ with $\mu_d(x)=m$, and put $\kappa=[z^m]P_d(z)>0$.
The saddle satisfies $x<15$. Indeed, increasing-function Riemann bounds give
\[
 \mu_d(15)\ge d\int_0^1f_{15}(t)\,dt-f_{15}(1),\qquad
 \int_0^1f_{15}(t)\,dt
 =\frac{15}{196}(14-2\log8)>\frac{751}{1000}.
\]
The last inequality follows from $\log8<209/100$, certified by the
degree-six exponential sum. For $d>1000$ this gives
$\mu_d(15)>3d/4>m$. Also
$\mu_d(x)\ge x(d-1)/32$, so
\begin{equation}\label{eq:saddle-size}
 \frac{x}{\sqrt{m+1}}<\frac{64}{\sqrt d}.
\end{equation}

For $n=d-s\ge d/2$, define
\[
 A_{d,s}=\left(1-\frac sd\right)\lambda(x)^s\frac{P_n(x)}{P_d(x)}
 =\left(\frac d{d-s}\right)^{3/2}e^{\delta_n-\delta_d}.
\]
Equation \eqref{eq:primitive-product} implies
\begin{equation}\label{eq:A-normalization}
 A_{d,s}<4,\qquad |A_{d,s}-1|<\frac{32(s+1)}d.
\end{equation}
To see this directly, the derivative of $(1-t)^{-3/2}$ is below $9$ on
$0\le t\le1/2$, its value is below $3$, and
$|\delta_n-\delta_d|<2/d$.
Thus $|e^{\delta_n-\delta_d}-1|<4/d$ and
$|A_{d,s}-1|<(9s+12)/d$. The upper bound follows from
$A_{d,s}<3e^{1/d}<4$.

Write
\[
 \rho_{s,k}=\frac{\Pr(X_{n,x}=m-k)}{\Pr(X_{d,x}=m)}.
\]
The primitive part of the $s$-summand in \eqref{eq:marked-identity},
after division by $\kappa$, is exactly
$A_{d,s}\sum_k h_{s,k}(x)\rho_{s,k}$.
Compare the marked identity with $E(x)=\sum_{s,k}h_{s,k}(x)$.
We estimate the primitive terms for $s\le d/2$, the full terms for
$s>d/2$, the omitted tail of $E$, and the periodic terms for $s\le d/2$.
These four parts account for every term.

For the first part, \eqref{eq:A-normalization},
Lemma~\ref{lem:pb-smoothing}, and the first-moment bounds give
\begin{align*}
 &\left|\sum_{s\le d/2}\sum_k h_{s,k}(A_{d,s}\rho_{s,k}-1)\right|\\
 &\quad<
 \frac{32(176+16)}d+
 \frac{4\cdot1792}{\sqrt{m+1}}
 \left(3x\cdot176+\frac{176x}{1+x}\right)\\
 &\quad<
 \frac{6144}d+\frac{322961408}{\sqrt d}.
\end{align*}
For the second part, use the full $J_n$ term with $n=d-s<d/2$.
The primitive product satisfies $P_d(x)>\lambda(x)^dd^{-5/2}/32$,
and \eqref{eq:modal-lower} gives a reciprocal probability below
$7\sqrt d$. Since $s^{-5/2}<6d^{-5/2}$,
\[
 \sum_{s>d/2}
 \left|\frac{(1-s/d)[z^m](J_{d-s}\mathcal H_s)}{\kappa}\right|
 <\frac{1344\cdot2^{16}}{\sqrt d}
 \sum_{n\ge1}n\|J_n\|_x\lambda(x)^{-n}
 <\frac{968884224}{\sqrt d}.
\]
The third part is bounded by
\[
 \sum_{s>d/2}\widehat h_s
 <2^{16}\sum_{s>d/2}s^{-5/2}
 <2^{18}d^{-3/2}.
\]
For the last step, integration from $\lfloor d/2\rfloor\ge d/3$
gives a bound $2\sqrt3\,d^{-3/2}<4d^{-3/2}$.

Finally, \eqref{eq:periodic-bound} bounds the periodic terms by
\[
 224d^3\sum_{s\le d/2}(4/5)^{(d-s)/2}\widehat h_s
 <2^{12}d^3e^{-d/20}.
\]
Here $(4/5)^{d/4}<e^{-d/20}$ follows from $\log(5/4)>1/5$.
Combining the four parts, and using
$6144+322961408+968884224+2^{18}<2^{31}$, yields the single estimate
\begin{equation}\label{eq:uniform-reverse-error}
 \left|\frac{a_{d,d-1-m}}{\kappa}-E(x)\right|
 <\frac{2^{31}}{\sqrt d}+2^{12}d^3e^{-d/20}.
\end{equation}
For $d\ge2^{72}$ the first term is at most $1/32$.
The second decreases for $d>60$ and is below $1/64$ at $2^{72}$:
$e>2$ bounds it by $2^{228-2^{72}/20}<2^{-6}$.
Since $E(x)>1/16$, the normalized coefficient is greater than
$1/16-1/32-1/64=1/64>0$.
\end{proof}

\begin{table}[ht]
\centering
\small
\begin{tabularx}{\textwidth}{@{}>{\raggedright\arraybackslash}p{.29\textwidth}>{\raggedright\arraybackslash}p{.22\textwidth}>{\raggedright\arraybackslash}X@{}}
\toprule
Index range & Sufficient dimension & Positive quantity and method\\
\midrule
$1\le r\le\lfloor d/4\rfloor$ & $d\ge2^{12}$ &
$S_{d,r}(c_*)$, by the forward Cauchy bound\\
$1\le m\le d-2-\lfloor d/4\rfloor$ & $d\ge2^{72}$ &
$a_{d,d-1-m}$, by the uniform smoothing estimate\\
\bottomrule
\end{tabularx}
\caption{The two estimates cover every repaired tail. A tail with
$r>\lfloor d/4\rfloor$ uses exactly the reverse indices
$1\le m\le d-1-r$, all covered by the second row.}
\label{tab:analytic-ranges}
\end{table}

\begin{samepage}
\begin{theorem}[Eventual positivity of the repaired tails]\label{thm:WLC}
For every integer $d\ge2^{72}$ and every $1\le r\le d-2$,
$S_{d,r}(\cstar)>0$. Consequently $H_d(t)$ has only negative real zeros
for every $d\ge2^{72}$.
\end{theorem}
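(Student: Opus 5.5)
The proof assembles the two analytic blocks according to Table~\ref{tab:analytic-ranges} and then applies Corollary~\ref{prop:cone}. Fix $d\ge2^{72}$ and $1\le r\le d-2$.

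\emph{Case $r\le\lfloor d/4\rfloor$.} Since $2^{72}\ge2^{12}$, Proposition~\ref{prop:low-r-block} gives $S_{d,r}(\cstar)>0$ directly.

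\emph{Case $r>\lfloor d/4\rfloor$.} Here we argue termwise. By \eqref{eq:tails}, the tail is
\[
 S_{d,r}(\cstar)=\sum_{j=r}^{d-2}\cstar^{j-r}a_{d,j},
\]
and each weight $\cstar^{j-r}$ is positive. Reindex by $m=d-1-j$. As $j$ runs from $r$ to $d-2$, $m$ runs from $1$ to $d-1-r$. Because $r\ge\lfloor d/4\rfloor+1$, we have
\[
 d-1-r\le d-2-\lfloor d/4\rfloor,
\]
so every index lies in the range of Proposition~\ref{prop:positive-reverse}. That proposition therefore gives $a_{d,d-1-m}>0$ for each term. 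The tail is a nonempty sum of positive terms, hence positive. This covers every $r$ with $1\le r\le d-2$.

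For the consequence, we check the remaining hypotheses of Corollary~\ref{prop:cone}, which requires $d\ge4$:
\begin{itemize}[label={}]
 \item the endpoint coordinate $a_{d,0}=C_{d,0}=1$ was computed from \eqref{eq:connected-burnside} at $y=0$;
 \item the endpoint coordinate $a_{d,d-1}=[t^{d-1}]H_d(t)$ is positive for $d\ge3$ by Proposition~\ref{prop:strong-bijection}, together with the codegree argument in Section~\ref{sec:tournaments}.
\end{itemize}
With these and the positive tails, Corollary~\ref{prop:cone} shows that all zeros of $H_d(t)$ are real and negative.

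The only point needing care is the index bookkeeping at the boundary between the two regimes. The forward block stops at $\lfloor d/4\rfloor$, and the reverse block must reach exactly $m=d-1-r$ for the smallest $r=\lfloor d/4\rfloor+1$. The inequality displayed above shows the two ranges meet with no gap. All the analytic difficulty was already handled in Propositions~\ref{prop:low-r-block} and~\ref{prop:positive-reverse}, so this proof only combines their conclusions.
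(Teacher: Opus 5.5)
Your proposal is correct and follows the same route as the paper. Proposition~\ref{prop:low-r-block} covers $r\le\lfloor d/4\rfloor$. For larger $r$, reversing the tail and applying Proposition~\ref{prop:positive-reverse} termwise gives positivity, and Corollary~\ref{prop:cone} then yields the conclusion. Your explicit checks of the endpoint coordinates and the boundary inequality $d-1-r\le d-2-\lfloor d/4\rfloor$ only spell out what the paper leaves implicit.
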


\begin{proof}
Proposition~\ref{prop:low-r-block} covers $r\le\lfloor d/4\rfloor$.
For $r>\lfloor d/4\rfloor$, put $M=d-1-r$. Reversing the tail gives
\[
 S_{d,r}(\cstar)=\sum_{m=1}^{M}\cstar^{M-m}a_{d,d-1-m}.
\]
Every coefficient in this sum is positive by
Proposition~\ref{prop:positive-reverse}. Since $\cstar>0$,
Proposition~\ref{prop:cone} completes the proof.
\end{proof}
\end{samepage}

\section{Consequences, interpretation, and the open interval}\label{sec:frontiers}

This final section assembles the results and distinguishes the achieved
objectives from the range that still requires new ideas. The results above form
a chain of interpretations of one lattice object.
Pattern avoidance gives the vertices. Prefix deficits reveal a path-Laplacian
polytope whose binary facet choices explain the cube. The chain-poset
permutahedron identifies the lattice and turns interior lattice points into
strong tournament score sequences. In this way, the cubicality and volume
statement of Davis and Sagan's Conjecture~3.19 follows from an explicit
geometric model, and the top Ehrhart coefficient acquires a classical
enumerative meaning.

The zero problem adds a fourth language. Refined Eulerian coordinates place
the Ehrhart numerator near a familiar compatible cone. The first negative
coordinate shows exactly where that cone becomes insufficient, and the sharp
adjacent repair enlarges it just enough to recover a useful positivity
criterion. The resulting geometric tails are simultaneously algebraic cone
coordinates, exact coefficients, and analytic objects. This common
interface is what allows a finite certificate and an eventual theorem to
support the same real-rootedness statement.

The exact certificate proves real-rootedness through dimension $1000$, and the
uniform coefficient estimates prove it from $2^{72}$ onward. They leave the interval
\[
 1001\le d<2^{72}
\]
open. This interval isolates a clear mathematical question whose solution would
connect the exact initial range with the eventual coefficient mechanism. The
evidence motivates the following statement.

\begin{conjecture}[Uniform real-rootedness]\label{conj:uniform}
For every $d\ge3$, all zeros of $H_d(t)$ are real and negative.
\end{conjecture}

Several routes could close the gap. A direct positive formula for the repaired
tails would settle Conjecture~\ref{conj:uniform} and explain why real-rootedness
persists. A broader compatible cone could reveal additional repair mechanisms,
possibly for other poset permutahedra or pattern-avoiding permutation
polytopes. Stronger local estimates for the relevant coefficient distributions
could extend the proved range toward smaller dimensions. Each route asks how geometric structure survives after it is
translated into enumerative coefficients and then into the location of
polynomial zeros.

The accompanying dataset provides exact test cases for these questions. It records all
$500500$ scaled refined coordinates for $1\le d\le1000$, and
Algorithm~\ref{alg:finite-certificate} reconstructs them and checks all
$498500$ strict tail inequalities. The coefficient decomposition identifies
the estimates that control the eventual range. Future approaches can therefore
be compared through exact coefficients, compatible cones, and proved dimension
ranges. The evidence through $d=1000$ supports
Conjecture~\ref{conj:uniform}, while every dimension in the open interval
remains a meaningful test of the same structural phenomenon.

\subsection*{Data availability}
The exact data are available at
\begin{center}
\url{https://github.com/tashimir/ehrhart-132-213-polytopes}.
\end{center}
The single CSV described in Section~\ref{sec:finite} can be downloaded from the
\href{https://github.com/tashimir/ehrhart-132-213-polytopes/releases/tag/v1.0.0}{version 1.0.0 data release}.
It contains all $500500$
scaled coordinates for $1\le d\le1000$. Its three columns are
\texttt{d,j,alpha\_scaled}, with $\alpha_{d,j}=d!a_{d,j}$.
Algorithm~\ref{alg:finite-certificate} reconstructs every entry from the
displayed formulas and checks the $498500$ strict tail inequalities
using exact arithmetic.

\end{document}